\newtheorem{thm}{Theorem}[section]
\newtheorem{lem}[thm]{Lemma}
\newtheorem{cor}[thm]{Corollary}
\newtheorem{prop}[thm]{Proposition}
\newtheorem{rem}[thm]{Remark}
\newtheorem{rems}[thm]{Remarks}
\theoremstyle{definition}
\theoremstyle{remark}
\numberwithin{equation}{section}
\newcommand{\N}{\mathbb N}
\newcommand{\R}{\mathbb R}
\newcommand{\kL}{\mathcal L}
\newcommand{\ve}{\varepsilon}
\newcommand{\rd}{\mathrm{d}}
\newcommand{\p}{\partial}
\newcommand{\ww}{\mathrm{w}}
\newcommand{\aw}{\mathrm{w}_0}
\begin{document}

\title[Recovering Initial States in Quasilinear Parabolic Problems]{Recovering Initial States in Certain Quasilinear Parabolic Problems from Time Averages}

 \author{Bogdan--Vasile Matioc}
\address{Fakult\"at f\"ur Mathematik, Universit\"at Regensburg,   93053 Regensburg, Germany.}
\email{bogdan.matioc@ur.de}

\author{Christoph Walker}
\address{Leibniz Universit\"at Hannover,
Institut f\"ur Angewandte Mathematik,
Welfengarten 1,
30167 Hannover,
Germany.}
\email{walker@ifam.uni-hannover.de}

\begin{abstract}
The inverse problem of reconstructing the initial state in quasilinear parabolic equations from time averages is investigated. Under suitable regularity assumptions on the quasilinear structure and a superlinear growth condition near zero for the semilinear part, it is shown that the initial state can be uniquely recovered from small time averages taken over an arbitrary time period. The applicability of the result is demonstrated for certain chemotaxis models and reaction–diffusion systems.
\end{abstract}

\keywords{Quasilinear parabolic equations; initial state recovering; time-weighted spaces.}
\subjclass[2020]{35K59, 47D99}

\maketitle

\section{Introduction and Main Result}

Recovering the past state of a system from given observations is a central challenge in the study of evolution equations. In this work,  we focus our attention on quasilinear parabolic problems of the form
\begin{subequations}\label{EE}
\begin{equation}\label{EEeq}
u'=A(u)u+ f(u)\,,\quad t\in(0,T]\,,
\end{equation}
subject to 
\begin{equation}\label{EEini}
\int_0^T  \ww(t)\, u(t)\,\rd t=M\,.
\end{equation}
\end{subequations}
We assume that $E_1$ and $E_0$ are Banach spaces such that $E_1$ is continuously and densely embedded into $E_0$ and that
 $A(u)\in\mathcal{H}(E_1,E_0)$. That is, for fixed $u$, the operator $A(u)$ with domain $E_1$ generates  a strongly continuous analytic semigroup on $E_0$. Hereby, $A(u)$ may depend nonlinearly upon $u$. Moreover, $f= f(u)$ is a  nonlinear function,  $\ww$ is a nontrivial weight function, and $M\in E_1$. In~\eqref{EE}, the usual initial condition is replaced by a nonlocal condition~\eqref{EEini}.
Roughly speaking, we shall show that if $A=A(u)$ and $f=f(u)$ depend Lipschitz continuously upon $u$ and $f$ has a superlinear growth near zero, then one can recover the (unique) initial state $u(0)$ from small values of time averages over arbitrary time
periods. Hence, problem~\eqref{EE} is well-posed for arbitrary $T>0$ and small values of $M$. The smallness condition is due
to the fact that $T >0$ is a priori fixed and one thus seeks for global solutions to a nonlinear problem. We shall later also consider related problems, where the nonlocal condition~\eqref{EEini} is replaced by conditions of the form
\begin{equation}\label{other1}
u(0)+\int_0^T  \ww(t)\, u(t)\,\rd t=M
\end{equation}
or (with a constant $\aw$)
\begin{equation}\label{other2}
 u(0)- \aw u(T)=M\,,
\end{equation}
which, however, are significantly easier to handle.\\

Before presenting our results in detail, let us mention that the semilinear counterpart of \eqref{EE} (i.e., when $A(u)$ is independent of $u$) was recently investigated in \cite{SchmitzWalkerJDE}. As we shall see herein, however, the quasilinear setting turns out to be substantially more involved. 

The linear counterpart of problem~\eqref{EE} (also subject to the other nonlocal conditions~${\text{\eqref{other1}-\eqref{other2}}}$) was considered in \cite{Dokuchaev19} in a Hilbert space setting.
Further results on solvability, uniqueness, and regularity of solutions to  linear  parabolic problems with conditions of type \eqref{other1}–\eqref{other2}  were obtained  in~\cite{Deng93,Dokuchaev4,Dokuchaev6,Dokuchaev7,MVS19,Pao95} (see also the references therein). The connection to stochastic differential equations is explored in~\cite{Dokuchaev4,Dokuchaev6,Dokuchaev8}.

Moreover, both linear and nonlinear problems with alternative, and in some cases more general, nonlocal conditions have been studied in a number of works~\cite{BenedettiEtal17,BenedettiCiani22,GarciaFalset08,Hernandez18,Jabeen17,Jackson93,Vrabie18, TrietEtal_MMAS21}, employing a wide range of analytical techniques in concrete functional settings~\cite{BenedettiCiani22,Jackson93,MVS19, TrietEtal_MMAS21} as well as in abstract frameworks~\cite{BenedettiEtal17,GarciaFalset08,Hernandez18,Jabeen17,Vrabie18}; see also the survey~\cite{Ntouyas05}. These references are by no means exhaustive. It is worth noting, however, that only a few contributions explicitly deal with genuinely quasilinear structures (but see~\cite{GarciaFalset08, TrietEtal_MMAS21}). \\

In order to  explain the difficulties associated with~\eqref{EE}, we proceed formally and assume the existence of a (H\"older continuous in time) solution $u$ such that the mapping $t \mapsto A(u(t))$ generates a parabolic evolution operator $U_{A(u)}(t,s)$, $0 \leq s \leq t \leq T$, on $E_0$ with regularity subspace $E_1$ in the sense of~\cite{LQPP}. Then, $u$ solving~\eqref{EEeq} can be written in form of the variation-of-constants formula
$$
u(t)=U_{A(u)}(t,0)u(0)+\int_0^t U_{A(u)}(t,s) f(u(s))\,\rd s\,,\quad t\in [0,T]\,,
$$
and, when plugging this into~\eqref{EEini}, we obtain an equivalent reformulation of~\eqref{EE} as a fixed point problem
\begin{subequations}\label{vdk}
\begin{equation}\label{vdke}
u(t)=U_{A(u)}(t,0)\Xi(u)+\int_0^t U_{A(u)}(t,s) f(u(s))\,\rd s\,,\quad t\in [0,T]\,,
\end{equation}
where $u(0)=\Xi(u)$ is given by
\begin{equation}\label{ulin}
\Xi(u)=\Phi_{A(u)}^{-1}\big(M-\Psi(u) f(u)\big)
\end{equation}
\end{subequations}
with
\begin{equation}\label{Phi}
\Phi_{A(u)}:=\int_0^T\ww(t)\,U_{A(u)}(t,0)\,\rd t
\end{equation}
and 
\begin{equation}\label{Psi}
\Psi(u)g:= \int_0^T\ww(t)\int_0^t U_{A(u)}(t,s) g(s)\,\rd s\,\rd t\,.
\end{equation}
Clearly, solving~ the fixed point problem~\eqref{vdk} requires on the one hand the existence of the evolution operator~$U_{A(u)}$ and, on the other hand, a careful analysis of the operator~$\Phi_{A(u)}$ that encodes the initial condition in~\eqref{vdk}, in particular concerning its invertibility and its dependence on~$u$.
It turns out that, under rather mild assumptions,~$\Phi_{A(u)}$ is an isomorphism between~$E_0$ and~$E_1$,  so that the  inverse operator $\Phi_{A(u)}^{-1}\in\mathcal{L}(E_1,E_0)$ is of  the same order as $A(u)$.
Hence, one can only expect that the initial value $u(0)=\Xi(u)$ in~\eqref{vdk} lies in~$E_0$, without any additional a priori regularity. It is well-known, however, that an initial value in the phase space $E_0$ is usually insufficient to guarantee the well-posedness of quasilinear problems, e.g. see~\cite{Amann_Teubner,Am88,PS16}. Furthermore, it is necessary that $M\in E_1$.
In view of these restrictions, and in order to retain greater flexibility for later applications, it is natural to reformulate the problem in spaces of higher regularity within an interpolation–extrapolation scale (see below). Moreover, the use of time-weighted spaces enables us to handle the singular behavior of the evolution operator ~$U_{A(u)}(t,s)$ in this scale at $t=s$ and to accommodate a sufficiently broad class of semilinearities. A further crucial step in solving~ the fixed point problem~\eqref{vdk} is then establishing the (Lipschitz) continuity of the mapping $u \mapsto \Phi_{A(u)}^{-1}$ in the operator norm of $\mathcal{L}(E_1,E_0)$,  which will require some effort.

\subsection*{Main Result}

To be more precise, certain technical assumptions are necessary. For this purpose, we assume for the (real or complex) Banach spaces $E_1$ and $E_0$ that
\begin{subequations}\label{EQ:A}
\begin{equation}\label{a2}
\text{$E_1$ is compactly and densely embedded in $E_0$}\,.
\end{equation}
Further, let
\begin{equation}\label{a3}
A(0)\in\mathcal{H}(E_1,E_0)\,,
\end{equation}
i.e. $A(0)\in\mathcal{L}(E_1,E_0)$ is the generator of a strongly continuous analytic semigroup~$(e^{tA(0)})_{t\ge 0}$ on~$E_0$. Define then
$$
E_2:=\big(\mathrm{dom}(A(0)^2),\|(\omega-A(0))^2\cdot\|_{E_0}\big)
$$ 
for some fixed $\omega\in \rho(A(0))$ (i.e. in the resolvent set).
We fix for each $\theta\in (0,1)$ an exact admissible interpolation functor~$(\cdot,\cdot)_\theta$  of exponent $\theta$ and introduce the Banach spaces\footnote{That is, $[(E_\theta,\tilde A_\theta);0\le \theta\le 2]$ is (part of) \textit{the interpolation-extrapolation scale of order $0$ generated by $(E_0,\tilde A)$ and~$(\cdot,\cdot)_\theta$} in the sense of \cite[V.Theorem~1.5.1]{LQPP}, where $\tilde A:=\omega-A(0)$. Then $E_\theta\hookrightarrow E_\vartheta$  is a compact and dense embedding for $0\le \vartheta\le \theta\le 2$.}
\begin{equation*}
E_\theta:=(E_0,E_1)_\theta\,,\qquad E_{1+\theta}:=(E_1,E_2)_\theta\,.
\end{equation*}
Fix
\begin{equation}\label{a3b}
0\le \beta<\alpha<\alpha_0\le 1
\end{equation}
 and    assume that
there is $r_0>0$ such that
\begin{equation}\label{a5}
\begin{aligned}
A&\in C^{1-}\big(\mathbb{B}_{E_\beta}(0,r_0), \mathcal{L}(E_1,E_0)\big)\cap C^{2-}\big(\mathbb{B}_{E_\beta}(0,r_0), \mathcal{L}(E_{1+\alpha},E_\alpha)\big)\\
&\quad\cap C^{1-}\big(\mathbb{B}_{E_\beta}(0,r_0), \mathcal{L}(E_{1+\alpha_0},E_{\alpha_0})\big)\,,
\end{aligned}
\end{equation}
where $C^{1-}$ and $C^{2-}$ refer to (local) Lipschitz continuity of a function and, respectively, its derivative.
We also require for the scalar weight function that
\begin{equation}\label{a1}
\ww\in C^1([0,T])\,,\qquad \ww(0)\not=0\,.
\end{equation}
As we shall later prove in Proposition~\ref{P1}, assumptions~\eqref{a3} and~\eqref{a1} imply that
$$
\Phi_{A(0)}=\int_0^T\ww(t)e^{tA(0)}\,\rd t\in\mathcal{L}(E_0,E_1)\,.
$$ 
We then further suppose that
\begin{equation}\label{a6}
\mathrm{ker}(\Phi_{A(0)})=\{0\}
\end{equation}
\end{subequations}
and thus obtain (again from Proposition~\ref{P1} below) that $\Phi_{A(0)}\in\mathcal{L}is(E_0,E_1)$. 

As for the semilinear part we fix
\begin{subequations}\label{AA}
\begin{equation}\label{A1Y}
 \ell>0\,,\qquad \alpha_0<\gamma\le 1\,,\qquad 0<\xi<\min\left\{\alpha+\frac{1}{\ell+1},1\right\}\,,
\end{equation}
and assume that $f:E_\xi\cap \mathbb{B}_{E_\beta}(0,r_0)\to E_\gamma$ 
satisfies, for $v,w\in E_\xi\cap \mathbb{B}_{E_\beta}(0,r_0)$,
\begin{equation}\label{F2}
\|f(v)-f(w)\|_{E_\gamma}\le c(r_0) \left(\left[\|v\|_{E_\xi}^\ell+\|w\|_{E_\xi}^\ell\right] \|v-w\|_{E_\xi}+ \left[\|v\|_{E_\xi}^{\ell+1}+\|w\|_{E_\xi}^{\ell+1}\right] \|v-w\|_{E_\beta}\right)
\end{equation}
for some constant $c(r_0)>0$ and that
\begin{equation}\label{F2x}
\qquad f(0)=0\,.
\end{equation}
\end{subequations}
We can then state the main result regarding the solvability of~\eqref{EE}:

\begin{thm}\label{MT1}
Let $T>0$ and assume~\eqref{EQ:A} and \eqref{AA}.  Then, there exists $m_0>0$ such that \eqref{EE} has for each $M\in E_{1+\alpha}$ with $\|M\|_{E_{1+\alpha}}\le m_0$  a unique solution
$$
u\in C^{\min\{\alpha-\beta, 1-\mu(\ell+1)\}}\big([0,T],E_\beta\big)\cap  C\big([0,T],E_\alpha\big)\cap C\big((0,T],E_1\big)\cap C^{1}\big((0,T],E_0\big)
$$
such that $\lim_{t\to 0^+}t^\mu\|u(t)\|_{E_\xi}=0$ for\footnote{Given $x\in\R$, we set $x_+:=\max\{0,x\}$.}  $(\xi-\alpha)_+<\mu< (\ell+1)^{-1}$.
\end{thm}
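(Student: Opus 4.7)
The plan is to recast~\eqref{EE} as the fixed point problem~\eqref{vdk} and apply Banach's contraction principle in a small ball of a time-weighted space. For fixed $\mu\in\big((\xi-\alpha)_+,(\ell+1)^{-1}\big)$ and $R>0$ to be chosen, I would work in
\[
\mathcal M_R:=\Big\{u\in C\big([0,T],E_\alpha\big)\cap C^{\alpha-\beta}\big([0,T],E_\beta\big)\,:\, \|u\|_{\mathcal M}\le R\Big\}\,,
\]
where $\|u\|_{\mathcal M}$ is the sum of the $C([0,T],E_\alpha)$-norm, the $C^{\alpha-\beta}([0,T],E_\beta)$-H\"older seminorm and $\sup_{t\in(0,T]}t^\mu\|u(t)\|_{E_\xi}$. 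For $R$ small, $E_\alpha\hookrightarrow E_\beta$ forces each trajectory to lie in $\mathbb B_{E_\beta}(0,r_0)$, so~\eqref{a5} and~\eqref{F2} apply; the H\"older continuity of $t\mapsto A(u(t))\in\mathcal L(E_1,E_0)$ then yields the parabolic evolution operator $U_{A(u)}(\cdot,\cdot)$ together with the standard smoothing estimates in the interpolation-extrapolation scale.

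Next I would verify that the Picard map $\mathcal F(u)(t):=U_{A(u)}(t,0)\Xi(u)+\int_0^tU_{A(u)}(t,s)f(u(s))\,\rd s$ sends $\mathcal M_R$ into itself. Proposition~\ref{P1}, lifted to the shifted scale, together with a Neumann-series perturbation around $\Phi_{A(0)}$ (which is small whenever $\|u\|_{\mathcal M}$ is small), shows that $\Phi_{A(u)}\in\mathcal{L}is(E_\alpha,E_{1+\alpha})$, so that $\Xi(u)\in E_\alpha$ is well defined. The superlinear bound $\|f(u(s))\|_{E_\gamma}\lesssim R^{\ell+1}s^{-\mu(\ell+1)}$ deduced from~\eqref{F2}--\eqref{F2x}, combined with $\mu(\ell+1)<1$, $\alpha<\alpha_0<\gamma$ and $\xi<\alpha+(\ell+1)^{-1}$, makes all the relevant convolutions finite and produces a bound of the form $\|\mathcal F(u)\|_{\mathcal M}\le C\|M\|_{E_{1+\alpha}}+CR^{\ell+1}$. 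Self-mapping then follows by first choosing $R$ small (using $\ell>0$) and then $m_0$ smaller still.

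The real obstacle---the point where the quasilinear structure bites---is the contraction estimate, which reduces to the Lipschitz dependence of $u\mapsto\Phi_{A(u)}^{-1}$ in operator norm. From the identity
\[
\Phi_{A(u)}^{-1}-\Phi_{A(v)}^{-1}=\Phi_{A(u)}^{-1}\big(\Phi_{A(v)}-\Phi_{A(u)}\big)\Phi_{A(v)}^{-1}
\]
the task is pushed back to controlling $U_{A(u)}(t,0)-U_{A(v)}(t,0)$ via Duhamel's formula
\[
U_{A(u)}(t,0)-U_{A(v)}(t,0)=\int_0^tU_{A(u)}(t,s)\big(A(v(s))-A(u(s))\big)U_{A(v)}(s,0)\,\rd s\,,
\]
at the higher-regularity pair $(E_{1+\alpha},E_\alpha)$. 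This is where the $C^{2-}$-hypothesis of~\eqref{a5} on $(E_{1+\alpha},E_\alpha)$ together with the $C^{1-}$-hypothesis on $(E_{1+\alpha_0},E_{\alpha_0})$ is essential, since the base-level $C^{1-}$-assumption only controls the evolution operator on $E_0$. Combining this Lipschitz bound with an analogous one for the semilinear term---where the prefactor $R^\ell$ from~\eqref{F2} supplies the smallness---Banach's theorem delivers the unique fixed point $u\in\mathcal M_R$. The stated regularities $u\in C((0,T],E_1)\cap C^1((0,T],E_0)$ and the vanishing $\lim_{t\to 0^+}t^\mu\|u(t)\|_{E_\xi}=0$ are then read off directly from the variation-of-constants formula, using the parabolic smoothing of $U_{A(u)}$ and dominated convergence.
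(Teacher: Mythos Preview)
Your overall architecture---recast as the fixed point problem~\eqref{vdk} and contract in a time-weighted ball---is exactly the paper's, and your choice of norms is close enough to the paper's space $X_L$ that this part is fine. The gap is in the step you flag as ``the real obstacle'': controlling $\Phi_{A(u)}-\Phi_{A(v)}$ in $\mathcal L(E_\alpha,E_{1+\alpha})$. The Duhamel identity you write down does \emph{not} yield this. In the shifted scale with base $E_\alpha$ and top $E_{1+\alpha}$, you are asking for the endpoint $\mathcal L(E_0',E_1')$-Lipschitz bound, and the singularities in
\[
\int_0^t\|U_{A(u)}(t,s)\|_{\mathcal L(E_\theta,E_{1+\alpha})}\,\|A(u(s))-A(v(s))\|_{\mathcal L(E_{1+\theta},E_\theta)}\,\|U_{A(v)}(s,0)\|_{\mathcal L(E_\alpha,E_{1+\theta})}\,\rd s
\]
always add up to order $-2$ regardless of $\theta$, so the integral diverges. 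This is precisely the content of Remark~\ref{R5}: the Duhamel-based \cite[II.Lemma~5.1.4]{LQPP} gives Lipschitz dependence of $U_A$ only in $\mathcal L(E_{\alpha'},E_{\beta'})$ with $\alpha'>0$ and $\beta'<1$, and the endpoint you need is excluded. Your Neumann-series argument for the invertibility of $\Phi_{A(u)}$ runs into the same wall, since it too requires smallness of $\Phi_{A(u)}-\Phi_{A(0)}$ in $\mathcal L(E_\alpha,E_{1+\alpha})$.

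The paper closes this gap with a genuinely different device (Theorem~\ref{T1}): it does not integrate the Duhamel difference but instead returns to Amann's explicit construction $U_A=a_A+a_A*w_A$ (see \eqref{UA}--\eqref{wA}), proves Lipschitz dependence of $w_A$ on $A$ in $\mathfrak K(E_0,1-\rho)$ (Proposition~\ref{D}), and then---crucially---applies an integration by parts in $t$ that inserts $A(s)^{-1}$ and throws the derivative onto $\ww$ (see \eqref{p1}, \eqref{p3}, and \eqref{i2}--\eqref{i5}). This is what buys the missing power and lands the estimate in $\mathcal L(E_0,E_1)$. The price is that the Lipschitz bound is in terms of the \emph{H\"older} seminorm $|||A(u)-A(v)|||_\rho$, not the sup norm (Remark~\ref{R10}); and this is where the $C^{2-}$-assumption on $A$ at the pair $(E_{1+\alpha},E_\alpha)$ actually enters---not in the Duhamel integrand, but in converting H\"older continuity of $u-v$ in $E_\beta$ into H\"older continuity of $A(u)-A(v)$ in $\mathcal L(E_{1+\alpha},E_\alpha)$ via the mean-value estimate \eqref{c7}--\eqref{c8}. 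You correctly sense that \eqref{a5} is essential here, but the mechanism you describe does not use it; the paper's mechanism does.
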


That is, problem~\eqref{EE} is well-posed over an arbitrary time interval $(0,T)$ provided that $M$ has a sufficiently small norm in $E_{1+\alpha}$. Let us emphasize again that the smallness condition on $M$ (as well as the condition $f(0)=0$)  is required for establishing a global existence result for a nonlinear problem. In fact, it suffices that $M$ is sufficiently close to the average of an equilibrium:

\begin{rem}
If $v_*\in E_{1+\alpha}$ is an equilibrium, i.e. $A(v_*)v_*+f(v_*)=0$, then one may consider the equation satisfied by $w:=u-v_*$: 
\begin{equation}\label{*}
w'=A_*(w)w+ f_*(w)\,,\quad t\in(0,T]\,,\qquad \int_0^T  \ww(t)\, w(t)\,\rd t=M-N_*\,,
\end{equation}
with
\begin{align*}
A_*(w):=A(w+v_*)\,,\qquad f_*(w):=A(w+v_*)v_*+f(w+v_*)\,,
\end{align*}
and
$$
N_*:=\int_0^T  \ww(t)\, \rd t\, v_*\in E_{1+\alpha}\,.
$$
Under corresponding assumptions, the well-posedness of \eqref{*} for $\|M-N_*\|_{E_{1+\alpha}}$ sufficiently small can then be addressed as for \eqref{EE}. Note that Theorem~\ref{MT1} is stated for the trivial equilibrium $v_*=0$ (recall that~$f(0)=0$).
\end{rem}

Some remarks are in order regarding the assumptions imposed in Theorem~\ref{MT1}:

\begin{rems}\,
{\bf (a)} Assumption~\eqref{a5} on the local Lipschitz continuity of $A$ in the extrapolated spaces~$E_{1+\alpha}$ and~$E_{1+\alpha_0}$ is imposed to allow for a more flexible functional-analytic framework in applications.  
The condition $\alpha>\beta$ in~\eqref{a3b} guarantees the a priori Hölder continuity of solutions (and hence of $t\mapsto A(u(t))$), which is essential for defining the evolution operators.  
For uniformly elliptic differential operators, assumptions~\eqref{a3b}-\eqref{a5} essentially amount to regularity requirements on the coefficients (see Section~\ref{Sec5}).

{\bf (b)} In applications, assumption~\eqref{a6} can most conveniently be verified in a Hilbert space setting by means of Fourier series; see \cite[Section~6]{SchmitzWalkerJDE} (and also~\cite{Dokuchaev19}).

{\bf (c)} Assumption~\eqref{AA} is often met in applications, such as reaction-diffusion systems, and compatible within the framework of time-weighted spaces (that we shall use to solve~\eqref{vdk}). See, for instance~\cite[Lemma~4.1]{MW_PRSE}.

{\bf (d)} The assumption $\ww(0)\neq 0$ in~\eqref{a1} ensures that some information about $u(0)$ is encoded in~\eqref{EEini}.

\end{rems}

The proof of Theorem~\ref{MT1} relies on solving the fixed point problem~\eqref{vdk} via Banach's fixed point theorem. To this end, some preparatory steps are required, the key one being the analysis of the operator~$\Phi_{A(u)}$ introduced in~\eqref{Phi} that encodes the initial condition.  

In a first step, we investigate in Section~\ref{Sec2} the operator~$\Phi_A$ for a given $A\in C^\rho([0,T],\mathcal{H}(E_1,E_0))$. We show that $\Phi_A \in \mathcal{L}(E_0,E_1)$ is a Fredholm operator (under the compactness assumption~\eqref{a2}) and hence invertible if injective (see Proposition~\ref{P1}). Moreover, we  prove that the mapping 
\[  
C^\rho\big([0,T],\mathcal{H}(E_1,E_0)\big) \rightarrow \mathcal{L}(E_0,E_1)\,,\quad A \mapsto \Phi_A
\] 
is locally Lipschitz continuous (see Theorem~\ref{T1}). This result is derived via a careful analysis of the construction of the evolution operators~$U_A$ in~\cite{LQPP}. It is important to emphasize that continuity is established in the topology of $\mathcal{L}(E_0,E_1)$, rather than in proper interpolation spaces (see Remark~\ref{R5} for more details). In fact, using the interpolation–extrapolation scale, all these results remain valid if $E_0$ and $E_1$ are replaced by the spaces $E_\alpha$ and $E_{1+\alpha}$, respectively.  

In Section~\ref{Sec3}, we then first derive stability estimates for the evolution operators~$U_{A(u)}$ and the operator~$\Phi_{A(u)}^{-1}$ with respect to a given $u$ in the interpolation–extrapolation scale. With these preparatory steps we are finally in a position to prove Theorem~\ref{MT1} by a fixed point argument at the end of Section~\ref{Sec3}.

In Section~\ref{Sec5} we shall provide applications of Theorem~\ref{MT1} to recover the initial states in (a variant of) the classical chemotaxis model of Keller-Segel~\cite{KS1971} as well as in quasilinear reaction-diffusion models with coefficients depending nonlocally on the solution.

Finally, Section~\ref{Sec4} is dedicated to the other nonlocal conditions~\eqref{other1} and ~\eqref{other2}. We prove in Theorem~\ref{MT3} and Theorem~\ref{MT4} similar results on well-posedness of the corresponding quasilinear evolution problem as in Theorem~\ref{MT1}.

%%%%%%%%%%%%%%%%%%%%%%%%%%%%%%%%%%%%%%%%%%%%%%%%%%%
%%%%%%%%%%%%%%%%%%%%%%%%%%%%%%%%%%%%%%%%%%%%%%%%%%%
\section{The Operator $\Phi$: Linear Theory}\label{Sec2}
%%%%%%%%%%%%%%%%%%%%%%%%%%%%%%%%%%%%%%%%%%%%%%%%%%%
%%%%%%%%%%%%%%%%%%%%%%%%%%%%%%%%%%%%%%%%%%%%%%%%%%%

To begin with the linear theory, we assume that $E_1$ and $E_0$  are   Banach spaces  with $E_1$  densely embedded  in $ E_0$ and consider 
\begin{equation}\label{Gen}
A\in C^\rho\big([0,T],\mathcal{H}(E_1,E_0)\big)
\end{equation} 
for some $\rho\in(0,1)$ and
\begin{equation}\label{b}
\ww\in C^1([0,T])\,.
\end{equation} 
Recall from \cite[II.Corollary~4.4.2]{LQPP} that \eqref{Gen} implies that there is a unique evolution operator 
$$
U_{A}(t,s)\,,\quad 0\le s\le t\le T\,,
$$ 
for $A$ on $E_0$ with regularity subspace $E_1$ in the sense of~\cite[II.Section~2.1]{LQPP}. We shall first show
 that, under suitable additional assumptions, the operator $\Phi_{A}$ 
  defined by
 \begin{equation}\label{PhiL}
\Phi_A:=\int_0^T\ww(t)\,U_{A}(t,0)\,\rd t
\end{equation}
 is an isomorphism from $E_0$ to $E_1$ and that it depends Lipschitz continuously on~$A$. For this purpose, 
 we follow closely the construction of evolution operators in~\cite{LQPP}.
 
 \begin{rem}\label{R12}
For time-independent $A\in\mathcal{H}(E_1,E_0)$ (i.e. $A$ is the generator of an analytic semigroup~$(e^{t A})_{t\ge 0}$ so that $U_A(t,0)=e^{tA}$), the  invertibility of 
$$
\Phi_{A}=\int_0^T \ww(t) e^{t A}\,\rd t \in\mathcal{L}(E_0,E_1)
$$
has been investigated in \cite[Proposition 4.1]{SchmitzWalkerJDE}. 
\end{rem}

%%%%%%%%%%%%%%%%%%%%%%%%%%%%%%%%%%%%%%%%%%%%%%%%%%%
\subsection{Invertibility of $\Phi_A$}
%%%%%%%%%%%%%%%%%%%%%%%%%%%%%%%%%%%%%%%%%%%%%%%%%%%

We  study the invertibility of $\Phi_A$. The main result in this regard is the following:

\begin{prop}\label{P1} 
If \eqref{Gen} and \eqref{b} are satisfied, then\footnote{The property  $\Phi_A\in \mathcal{L}(E_0,E_1)$ still holds under the weaker assumption  $\ww\in C([0,T])\cap C^1([0,\ve])$, for some $\ve>0$, in place of \eqref{b}.} $\Phi_A\in \mathcal{L}(E_0,E_1)$. Moreover, if 
\begin{equation}\label{comp}
\text{$E_1$ is compactly embedded into $E_0$}
\end{equation}
and 
\begin{equation}\label{b0}
\ww(0)\not= 0\,,
\end{equation}
then $\Phi_A\in \mathcal{L}(E_0,E_1)$ is a Fredholm operator of index zero. In particular, if $\mathrm{ker}(\Phi_A)=\{0\}$ in addition, then $\Phi_A\in \mathcal{L}is(E_0,E_1)$.
\end{prop}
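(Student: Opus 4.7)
The plan is to establish the three claims in sequence. The key analytic device is the regularizing identity $A(t)U_A(t,0) = \p_t U_A(t,0)$ combined with integration by parts in $\ww$, which is essential because the naive bound $\|U_A(t,0)\|_{\mathcal{L}(E_0,E_1)} \le C/t$ from the construction in \cite{LQPP} is not integrable on $(0,T]$.

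For the first claim $\Phi_A \in \mathcal{L}(E_0,E_1)$, I would fix $x \in E_0$, consider the truncated integrals $\int_\ve^T \ww(t) U_A(t,0) x \,\rd t$ (which are Bochner integrals in $E_1$), apply $A(0)$, and split
\[
A(0)U_A(t,0) = [A(0)-A(t)]U_A(t,0) + \p_t U_A(t,0).
\]
The H\"older assumption \eqref{Gen} makes the first summand $O(t^{\rho-1})$ in $\mathcal{L}(E_0)$, hence integrable on $(0,T]$; integration by parts on the second summand yields the boundary terms $\ww(T)U_A(T,0)x - \ww(\ve)U_A(\ve,0)x$ together with a bounded integral of $-\ww'(t)U_A(t,0)x$. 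Letting $\ve\to 0^+$ via strong continuity of $U_A(\cdot,0)x$, the closedness of $A(0)$ forces $\Phi_A x \in E_1$ and yields the pointwise identity
\[
A(0)\Phi_A x = -\ww(0)x + \tilde K x,
\]
where
\[
\tilde K x := \ww(T)U_A(T,0)x - \int_0^T \ww'(t)U_A(t,0)x\,\rd t + \int_0^T \ww(t)[A(0)-A(t)]U_A(t,0)x\,\rd t.
\]
Tracking the constants in these estimates delivers $\|\Phi_A x\|_{E_1} \le C\|x\|_{E_0}$.

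For the Fredholm claim, I would fix $\omega$ in $\rho(A(0))$ and reduce to showing that
\[
(\omega - A(0))\Phi_A = \ww(0)\, I + (\omega\Phi_A - \tilde K)\colon E_0 \to E_0
\]
is Fredholm of index zero. Under \eqref{comp}, $\omega\Phi_A$ factors through the compact embedding $E_1 \hookrightarrow E_0$ and is hence compact on $E_0$; for $\tilde K$, each summand, truncated to $[\ve,T]$, is a Bochner integral of a continuous compact-operator-valued map (pointwise compactness again coming from $E_1 \hookrightarrow E_0$ being compact), and the same integrable bounds permit passing to $\ve \to 0^+$ in $\mathcal{L}(E_0)$ while preserving compactness of the limit. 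Since $\ww(0) \neq 0$ by \eqref{b0}, $(\omega-A(0))\Phi_A$ is a compact perturbation of a scalar isomorphism on $E_0$, hence Fredholm of index zero; composition with the isomorphism $(\omega - A(0))^{-1}\colon E_0 \to E_1$ transfers this property to $\Phi_A \in \mathcal{L}(E_0,E_1)$. The last assertion is then immediate: a Fredholm operator of index zero with trivial kernel is bijective, and the open mapping theorem upgrades it to $\Phi_A \in \mathcal{L}is(E_0,E_1)$.

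The main difficulty I anticipate lies at $t=0$: the non-integrable singular bound $\|U_A(t,0)\|_{\mathcal{L}(E_0,E_1)} \lesssim 1/t$ is tamed only by the H\"older gain $t^\rho$ from \eqref{Gen} (justifying why a strictly positive H\"older exponent is imposed rather than mere continuity in $t$), while the compact embedding in \eqref{comp} is needed to promote pointwise boundedness into pointwise compactness before the integration is performed, so that $\tilde K$ is not only bounded but compact on $E_0$.
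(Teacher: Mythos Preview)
Your approach is correct and takes a genuinely different route from the paper. The paper decomposes the evolution operator via the explicit Volterra parametrix from \cite{LQPP}, writing $U_A = a_A + a_A*w_A$ with $a_A(t,s)=e^{(t-s)A(s)}$, and treats $\Phi_{A,1}:=\int_0^T\ww(t)e^{tA(0)}\,\rd t$ and $\Phi_{A,2}:=\Phi_A-\Phi_{A,1}$ separately: integration by parts is applied only to the semigroup piece $\Phi_{A,1}$, while $\Phi_{A,2}$ is handled through an auxiliary primitive $\mathcal{F}(t)=\int_0^t A(s)^{-1}e^{(t-s)A(s)}w_A(s,0)\,\rd s$. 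For the compactness part the paper then tracks compactness through the whole Volterra hierarchy $a_A\to k_A\to w_A$. Your argument bypasses all of this by working directly with the identity $A(0)U_A(t,0)=[A(0)-A(t)]U_A(t,0)+\p_tU_A(t,0)$ on the full evolution operator; the same integrable bound $t^{\rho-1}$ and boundary terms emerge, but in a single stroke. This is more elementary and self-contained. What the paper's decomposition buys is that the parametrix pieces are reused immediately afterwards in the proof of Theorem~\ref{T1} (Lipschitz dependence of $\Phi_A$ on $A$ in $\mathcal{L}(E_0,E_1)$), so the extra machinery is not wasted in context; your route would not feed directly into that later argument.

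One point to sharpen: for the compactness of the summand $\int_0^T\ww(t)[A(0)-A(t)]U_A(t,0)\,\rd t$, saying that pointwise compactness ``comes from $E_1\hookrightarrow E_0$ being compact'' is not quite enough as stated, because $[A(0)-A(t)]$ maps $E_1\to E_0$ rather than $E_0\to E_0$, so the compact embedding does not sit between the two factors. You need $U_A(t,0)\in\mathcal{K}(E_0,E_1)$ for $t>0$, which follows from the factorization $U_A(t,0)=U_A(t,t/2)\circ\iota\circ U_A(t/2,0)$ with $\iota:E_1\hookrightarrow E_0$ compact (this is exactly the trick the paper uses for $a_A$ in \eqref{aAcomp}). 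With that in hand your limit-in-$\mathcal{L}(E_0)$ argument goes through.
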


\begin{proof}
{\bf (i)} Assume \eqref{Gen} and \eqref{b}. Recall from \cite[II.Section~4.3]{LQPP} that the evolution operator~$U_{A}$ is given in the form
\begin{equation}\label{UA}
U_{A}(t,s)=(a_{A}+a_A*w_A)(t,s)=a_A(t,s)+\int_s^t a_A(t,\tau)w_A(\tau,s)\,\rd \tau \,,\quad 0\le s{  <} t\le T\,,
\end{equation}
with
\begin{equation}\label{aA}
a_A(t,s):=e^{(t-s)A(s)}\,,\quad 0\le s< t\le T\,,
\end{equation}
and
\begin{equation}\label{wA}
w_A:=\sum_{n=1}^\infty \underbrace{k_A*\ldots *k_A}_{n\text{ times}}\qquad\text{with}\qquad k_A(t,s):=[A(t)-A(s)]a_A(t,s)\,,\quad 0\le s  < t\le T\,.
\end{equation}
Therefore, recalling~\eqref{PhiL},
\begin{equation}\label{desc1}
\Phi_A= \Phi_{A,1}+\Phi_{A,2}\,,
\end{equation}
where
$$
\Phi_{A,1}:=\int_0^T\ww(t)\, e^{tA(0)}\,\rd t\,,\qquad \Phi_{A,2}:=\int_0^T \ww(t)\int_0^t  e^{(t-s)A(s)}w_A(s,0)\,\rd s \rd t\,.
$$
 We  note that
\begin{align}
A(0)\Phi_{A,1}&=\int_0^T\ww(t)\,A(0)\,e^{tA(0)}\,\rd t=-\ww(0)+\ww(T)\,e^{TA(0)}-\int_0^T{\ww'(t)}\,e^{tA(0)}\, \rd t\,,\label{p1}
\end{align}
hence $A(0)\Phi_{A,1}\in \mathcal{L}(E_0)$. Consequently, we have shown that $\Phi_{A,1}\in \mathcal{L}(E_0,E_1)$.

As for $\Phi_{A,2}$, we may assume that $0\in \rho(A(t))$ for each $t\in [0,T]$ (otherwise   we    choose $\omega_0$ with~$\omega_0\in \rho(A(t))$ for each $t\in [0,T]$ and replace $A$  and $\ww(t)$ by $A-\omega_0$ and $\ww(t)e^{\omega_0 t}$, respectively). We then infer from \eqref{Gen} and \cite[II.Lemma 4.3.1]{LQPP} that there exists  a constant  $c>0$ such that
\begin{equation}\label{wAs}
\|w_A(t,s)\|_{\mathcal{L}(E_0)}\le c (t-s)^{\rho-1}\,,\quad 0\le s< t\le T\,.
\end{equation}
We next define the function $\mathcal{F}:[0,T]\to\kL(E_0,E_1)$ by setting 
\[
\mathcal{F}(t):=\int_0^t A(s)^{-1} e^{(t-s)A(s)}w_A(s,0)\,\rd s,\quad 0\leq t\leq T.
\]
Using \eqref{wAs}, it follows that~$\mathcal{F}\in C([0,T],\kL(E_0))$ and, for each $x\in E_0$, that~$\mathcal{F}(\cdot )x\in C^1((0,T],E_0)$ 
with 
\[
\frac{\rd }{\rd t}\big(\mathcal{F}(t)x\big)=A(t)^{-1}w_A(t,0)x+\int_0^t e^{(t-s)A(s)}w_A(s,0)x\,\rd s,\quad 0<t\leq T.
\]
Therefore, given $x\in E_0$,  integration by parts leads us to
\begin{align}
\Phi_{A,2}x&=\int_0^T \ww(t) \Big[\frac{\rd }{\rd t}\big(\mathcal{F}(t)x\big)-A(t)^{-1}w_A(t,0)x\Big]\,\rd t\nonumber\\
&=\ww(T)\mathcal{F}(T)x-\int_0^T \ww(t)A(t)^{-1}w_A(t,0)x\,\rd t-\int_0^T \ww'(t) \mathcal{F}(t)x\,\rd t\,.\label{p3}
\end{align}

We now readily obtain that $\Phi_{A,2}\in \mathcal{L}(E_0,E_1)$ and hence $\Phi_A\in \mathcal{L}(E_0,E_1)$.\medskip

\noindent{\bf (ii)} Let us now also impose \eqref{comp}-\eqref{b0}. 
Combining \eqref{desc1}-\eqref{p1}, we have  
\begin{align}\label{p4}
A(0)\Phi_{A}+\ww(0) =K + A(0)\Phi_{A,2}\,,
\end{align}
where 
$$
K:=\ww(T)\,e^{TA(0)}-\int_0^T\ww'(t)\,e^{tA(0)}\,\rd s\,.
$$
Note that \cite[II.Lemma~5.1.3]{LQPP} and~\eqref{comp} imply for an arbitrary $\theta\in (0,1)$ that 
\begin{equation}\label{p2}
K\in \mathcal{L}(E_0,E_\theta)\subset \mathcal{K}(E_0)\,,
\end{equation}
where $\mathcal{K}$ refers to compact operators.
Additionally, since   $e^{t A(s)}\in\mathcal{L}(E_0,E_1)\subset \mathcal{K}(E_0)$ for each $t>0$ and~$s\in[0,T]$, we infer from \eqref{aA} that
\begin{align}\label{aAcomp}
a_A(t,s)=e^{(t-\ve-s) A(s)} e^{\ve A(s)}\in\mathcal{K}(E_0,E_1)\,,\quad 0\le s<t\le T\,,
\end{align}
by choosing $\ve\in (0,t-s)$.
In view of definition \eqref{wA}, we then get 
$$
k_A(t,s)=[A(t)-A(s)]a_A(t,s)\in\mathcal{K}(E_0)\,,\quad 0\le s<t\le T\,.
$$
 Since $\mathcal{K}(E_0)$ is closed in $\mathcal{L}(E_0)$, we may use Riemann sums to derive that 
 $k_A*k_A(t,s) \in\mathcal{K}(E_0)$ and inductively that $k_A*\ldots *k_A(t,s) \in\mathcal{K}(E_0)$ for $0\le s<t\le T$. 
 Now,  \cite[II.Lemma~4.3.1]{LQPP} ensures that 
 \begin{align}\label{wAcomp}
w_A(t,s)\in\mathcal{K}(E_0)\,,\quad 0\le s<t\le T\,,
\end{align}
 using again the fact that $\mathcal{K}(E_0)$ is a closed subspace of $\mathcal{L}(E_0)$. It then  follows \eqref{p3} and~\eqref{wAcomp}, since~$A(t)^{-1}\in \mathcal{L}(E_0,E_1)$, $t\in[0,T]$,   that
$\Phi_{A,2}\in\mathcal{K}(E_0,E_1)$, 
and, together with \eqref{p4}-\eqref{p2}, we  conclude that
$$
A(0)\Phi_{A}+\ww(0)\in\mathcal{K}(E_0)\,.
$$ 
Since $\ww(0)\not=0$ by \eqref{b0}, the Riesz-Schauder theorem implies that~${A(0)\Phi_{A} \in \mathcal{L}(E_0)}$ is a Fredholm operator with index zero. Due to 
$A(0)^{-1} \in \mathcal{L}is(E_0,E_1)$, we obtain that~$\Phi_{A} \in \mathcal{L}(E_0,E_1)$ is as well a Fredholm operator with index zero.
\end{proof}

\begin{rem}\label{R7}
One may consider as well a slightly more general nonlocal condition of the form
$$
\aw u(T)+\int_0^T  \ww(t)\, u(t)\,\rd t=M
$$
with a given $\aw\in \R$ (e.g. see~\cite{Dokuchaev19}),  which leads to the operator
 \begin{equation}\label{phiA0}
\tilde \Phi_A=\aw  U_{A}(T,0)+\int_0^T\ww(t)\,U_{A}(t,0)\,\rd t\,.
\end{equation}
Then Proposition~\ref{P1} and its proof remain valid verbatim as is easily seen.
\end{rem}

Before continuing let us point out that Proposition~\ref{P1} (together with Remark~\ref{R7}) entail a nontrivial extension of \cite[Corollary 2.3,Theorem 2.4]{SchmitzWalkerJDE} to the semilinear problem
\begin{equation}\label{Ex}
u'=A(t)u+f(u)\,,\quad t\in(0,T]\,,\qquad \aw \, u(T)+\int_0^T  \ww(t)\, u(t)\,\rd t=M\,,
\end{equation}
with time-dependent dependent operator $A(t)$. To make this precise, we assume that there are
\begin{subequations}\label{DD}
\begin{equation}\label{A1Yxxxx}
 \ell>0\,,\qquad 0<\gamma\le 1\,,\qquad 0<\xi<\frac{1}{\ell+1}\,,
\end{equation}
such that $f:E_\xi\cap \mathbb{B}_{E_0}(0,r_0)\to E_\gamma$ for some $r_0>0$. Moreover, we assume that there exists a constant $c(r_0)>0$ such that,
 for $v,w\in E_\xi\cap \mathbb{B}_{E_0}(0,r_0)$,
\begin{equation}\label{F2xx}
\|f(v)-f(w)\|_{E_\gamma}\le c(r_0) \left(\left[\|v\|_{E_\xi}^\ell+\|w\|_{E_\xi}^\ell\right] \|v-w\|_{E_\xi}+ \left[\|v\|_{E_\xi}^{\ell+1}+\|w\|_{E_\xi}^{\ell+1}\right] \|v-w\|_{E_0}\right)
\end{equation}
and
\begin{equation}\label{F2xxx}
\qquad f(0)=0\,,
\end{equation}
\end{subequations}
where $E_\theta:=(E_0,E_1)_\theta$ for $\theta\in (0,1)$ with some admissible interpolation functor $(\cdot,\cdot)_\theta$.

\begin{cor}\label{C1}
Suppose \eqref{Gen}, \eqref{b}, \eqref{comp}, \eqref{b0}, and $\mathrm{ker}(\tilde\Phi_A)=\{0\}$ with $\tilde\Phi_A$ defined in~\eqref{phiA0}. Moreover, assume~\eqref{DD}.  
Then, there is $m_0>0$ such that, for each $M\in E_1$ with $\|M\|_1\le m_0$, problem~\eqref{Ex} has a unique  solution
$$
u\in C^1\big((0,T],E_0\big)\cap C\big((0,T], E_1\big)\cap C\big([0,T],E_0\big)\,.
$$
with $\lim_{t\to 0}t^\xi\|u(t)\|_{E_\xi}=0$.
\end{cor}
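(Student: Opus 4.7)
The plan is to reformulate \eqref{Ex} as a fixed point problem along the lines of \eqref{vdk}. Writing the variation-of-constants formula $u(t)=U_A(t,0)u(0)+\int_0^t U_A(t,s)f(u(s))\,\rd s$ for solutions of $u'=A(t)u+f(u)$ and substituting it into the nonlocal condition yields
$$\tilde\Phi_A u(0)=M-\tilde\Psi_{\aw}(f(u)),$$
where
$$\tilde\Psi_{\aw}(g):=\aw\int_0^T U_A(T,s)g(s)\,\rd s+\int_0^T\ww(t)\int_0^t U_A(t,s)g(s)\,\rd s\,\rd t.$$
Under the assumptions of the corollary, Proposition~\ref{P1} combined with Remark~\ref{R7} gives $\tilde\Phi_A\in\mathcal{L}is(E_0,E_1)$, so that $u(0)=\tilde\Phi_A^{-1}\bigl(M-\tilde\Psi_{\aw}(f(u))\bigr)$ is well-defined as soon as the argument lies in $E_1$. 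The task thus reduces to finding a fixed point of
$$(\Xi u)(t):=U_A(t,0)\tilde\Phi_A^{-1}\bigl(M-\tilde\Psi_{\aw}(f(u))\bigr)+\int_0^t U_A(t,s)f(u(s))\,\rd s,\quad t\in[0,T].$$

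As functional setting I would use the time-weighted Banach space
$$X_T:=\Bigl\{u\in C([0,T],E_0)\cap C((0,T],E_\xi):\lim_{t\to 0^+}t^\xi\|u(t)\|_{E_\xi}=0\Bigr\},$$
normed by $\|u\|_{X_T}:=\sup_{t\in[0,T]}\|u(t)\|_{E_0}+\sup_{t\in(0,T]}t^\xi\|u(t)\|_{E_\xi}$, and show that $\Xi$ is a self-map and a contraction on a small ball $\mathbb{B}_{X_T}(0,\delta)$ once $\|M\|_{E_1}\le m_0$ is small enough. The estimates rest on the standard smoothing bounds $\|U_A(t,s)\|_{\mathcal{L}(E_\theta,E_\vartheta)}\le c(t-s)^{\theta-\vartheta}$ for $0\le\theta\le\vartheta\le 1$ (from \cite[II.Lemma~5.1.3]{LQPP}) together with the growth condition \eqref{F2xx}, which gives $\|f(u(s))\|_{E_\gamma}\le c\, s^{-\xi(\ell+1)}\delta^{\ell+1}$ for $u\in\mathbb{B}_{X_T}(0,\delta)$. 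The condition $\xi<1/(\ell+1)$ in \eqref{A1Yxxxx} is precisely what is needed to guarantee integrability of $s\mapsto s^{-\xi(\ell+1)}$ near the origin, and hence convergence of the Duhamel term.

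The delicate point is to check that $\tilde\Psi_{\aw}(f(u))$ really lies in $E_1$ with norm controlled by $\delta^{\ell+1}$. To this end I would repeat the integration-by-parts manoeuvre used in the proof of Proposition~\ref{P1}: after swapping the order of integration in the $\int_0^T\ww(t)\int_0^t\cdots\,\rd s\,\rd t$ piece, the inner kernel $\int_s^T\ww(t)U_A(t,s)\,\rd t$ plays the role of $\Phi$ shifted by $s$, and the same integration by parts (combined with $A(s)^{-1}\in\mathcal{L}(E_0,E_1)$) shows that it lies in $\mathcal{L}(E_0,E_1)$ with norm uniformly bounded in $s\in[0,T]$. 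The $\aw$-term is treated directly via $\|U_A(T,s)\|_{\mathcal{L}(E_\gamma,E_1)}\le c(T-s)^{\gamma-1}$. Together with the singular but integrable bound on $\|f(u(s))\|_{E_\gamma}$, this produces the desired $E_1$-estimate on $\tilde\Psi_{\aw}(f(u))$. Applying the isomorphism $\tilde\Phi_A^{-1}$ and the smoothing bound $\|U_A(t,0)\|_{\mathcal{L}(E_0,E_\xi)}\le ct^{-\xi}$ then yields $\|\Xi u\|_{X_T}\le c(\|M\|_{E_1}+\delta^{\ell+1})$; choosing first $\delta$ small and then $m_0$ much smaller, $\Xi$ maps $\mathbb{B}_{X_T}(0,\delta)$ into itself.

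The contraction estimate is entirely analogous, using the Lipschitz bound \eqref{F2xx} applied to $u,v\in\mathbb{B}_{X_T}(0,\delta)$; it produces $\|\Xi u-\Xi v\|_{X_T}\le c\delta^\ell\|u-v\|_{X_T}$, which is a strict contraction after a further reduction of $\delta$. Banach's fixed point theorem then furnishes a unique fixed point $u\in\mathbb{B}_{X_T}(0,\delta)$, which is the sought solution; the regularity $u\in C^1((0,T],E_0)\cap C((0,T],E_1)$ follows from parabolic regularity of $U_A$ together with the Hölder continuity of $s\mapsto f(u(s))$ (inherited from $u\in X_T$ and \eqref{F2xx}), and the prescribed behaviour $t^\xi\|u(t)\|_{E_\xi}\to 0$ is built into $X_T$. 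The main obstacle throughout is the $E_1$-valuedness of $\tilde\Psi_{\aw}(f(u))$; once this is in place, the rest is a routine contraction argument on the weighted space.
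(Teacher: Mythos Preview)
Your proposal is correct and matches the route the paper takes: the paper's proof consists solely of invoking Proposition~\ref{P1} and Remark~\ref{R7} for the invertibility of $\tilde\Phi_A\in\mathcal{L}is(E_0,E_1)$ and then citing \cite[Corollary~2.3, Theorem~2.4]{SchmitzWalkerJDE} for the fixed point argument in the time-weighted space, which is exactly the argument you have sketched out in detail.

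One small simplification: your integration-by-parts manoeuvre for the $\int_0^T\ww(t)\int_0^t\cdots$ piece, while correct, is not needed here. Since $\gamma>0$ by \eqref{A1Yxxxx} and $f(u(s))\in E_\gamma$, you can estimate this term directly, just as you do for the $\aw$-term, via
\[
\int_0^T|\ww(t)|\int_0^t \|U_A(t,s)\|_{\mathcal{L}(E_\gamma,E_1)}\|f(u(s))\|_{E_\gamma}\,\rd s\,\rd t
\le c\,\delta^{\ell+1}\int_0^T\int_0^t (t-s)^{\gamma-1}s^{-\xi(\ell+1)}\,\rd s\,\rd t<\infty,
\]
which converges because $\gamma>0$ and $\xi(\ell+1)<1$. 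Also, your appeal to ``H\"older continuity of $s\mapsto f(u(s))$'' for the final regularity step is slightly imprecise; what is actually used is that $f(u)\in C_{\xi(\ell+1)}((0,T],E_\gamma)$ with $\xi(\ell+1)<1$, which suffices for the regularity theory (cf.\ the argument via \cite[Proposition~2.1]{MW_PRSE} used later in the paper).
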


\begin{proof}
Using the invertibility of the operator $\tilde \Phi_A\in\mathcal{L}(E_0,E_1)$ stated in Proposition~\ref{P1} and Remark~\ref{R7}, the proof is the same as in 
 \cite[Corollary 2.3,Theorem 2.4]{SchmitzWalkerJDE}.
\end{proof}

In fact, Corollary~\ref{C1} follows also (with minor adaptions) as a special case from Theorem~\ref{MT1} for the borderline value $\alpha=0$, 
which is excluded for the quasilinear problem but admissible for the  semilinear problem (where one may choose $\alpha = \beta$).

%%%%%%%%%%%%%%%%%%%%%%%%%%%%%%%%%%%%%%%%%%%%%%%%%%%%%%%%
%%%%%%%%%%%%%%%%%%%%%%%%%%%%%%%%%%%%%%%%%%%%%%%%%%%%%%%%
\subsection{Lipschitz Continuity of $\Phi_A$ with respect to $A$}
%%%%%%%%%%%%%%%%%%%%%%%%%%%%%%%%%%%%%%%%%%%%%%%%%%%%%%%%
%%%%%%%%%%%%%%%%%%%%%%%%%%%%%%%%%%%%%%%%%%%%%%%%%%%%%%%%

Having established conditions for the invertibility of  the operator $\Phi_A\in\mathcal{L}(E_0,E_1)$ for a given $A\in C^\rho([0,T],\mathcal{H}(E_1,E_0))$, we next investigate Lipschitz properties of the mapping $A\mapsto \Phi_A$. 
Let us first point out the following:

\begin{rem}\label{R5} 
The Lipschitz continuity of the mapping $A\mapsto \Phi_A$ in suitable interpolation spaces follows from~\cite[II.Lemma~5.1.4]{LQPP}.
Indeed, if $E_\theta=(E_0,E_1)_\theta$ for some admissible interpolation functors~$(\cdot,\cdot)_\theta$ of exponent $\theta\in (0,1)$, then
$$
\|\Phi_{A}-\Phi_{B}\|_{\mathcal{L}(E_\alpha,E_\beta)}\le c(T) \|\ww\|_\infty  \| A-B\|_{C([0,T],\mathcal{L}(E_1,E_0))}
$$
provided that $0<\alpha\le 1$ and $0\le\beta<1$. However, this result is insufficient for our purpose since we shall need the estimate for the limiting cases $\alpha=0$ and $\beta=1$, i.e. in~$\mathcal{L}(E_1,E_0)$, which is not included in~\cite[II.Lemma~5.1.4]{LQPP}.
\end{rem}

To state our result, let  $\rho\in (0,1)$ be fixed. Given  $\kappa\ge 1$,~$\omega>0$, and $\eta>0$, we denote by \begin{subequations}\label{stern}
\begin{equation}
\mathcal{A}=\mathcal{A}(\kappa,\omega,\eta)\subset C^\rho\big([0,T],\mathcal{L}(E_1,E_0)\big)
\end{equation}   a set satisfying\footnote{Here,~${\mathcal{H}(E_1,E_0;\kappa,\omega)}$ denotes the class of all operators $A\in\mathcal{L}(E_1,E_0)$ such 
that $\omega-A$ is an
isomorphism from~$E_1$ onto~$E_0$ and
$$
\frac{1}{\kappa}\,\le\,\frac{\|(\mu-A)z\|_{0}}{\vert\mu\vert \,\| z\|_{0}+\|z\|_{1}}\,\le \, \kappa\ ,\qquad {\rm Re\,}\mu\ge \omega\ ,\quad z\in E_1\setminus\{0\}\,.
$$
Observe (see~\cite{LQPP}) that $$\mathcal{H}(E_1,E_0)=\bigcup_{\omega>0\,,\,\kappa\ge 1} \mathcal{H}(E_1,E_0;\kappa,\omega)\,.$$}
\begin{equation}
A\in \mathcal{H}(E_1,E_0;\kappa,\omega)\,,\quad  A\in\mathcal{A}\,,
\end{equation} 
and 
\begin{equation} 
\vert\vert\vert A\vert\vert\vert_\rho\le \eta\,,\quad  A\in\mathcal{A}\,,
\end{equation}
\end{subequations}
where $\vert\vert\vert \cdot\vert\vert\vert_\rho$ denotes the norm in $C^\rho([0,T],\mathcal{L}(E_1,E_0))$.

Let us point out that for each $B\in C^\rho\big([0,T],\mathcal{H}(E_1,E_0)\big)$ there are  $\kappa\ge 1$,~$\omega>0$, and~${\eta>0}$ 
such that $\mathcal{A}(\kappa,\omega,\eta)$ is a neighborhood of $B$ in $C^\rho([0,T],\mathcal{L}(E_1,E_0))$  (see \cite[I.Theorem~1.3.1, I.Corollary~1.3.2]{LQPP}).\\

Our main result regarding the Lipschitz continuity of $A\mapsto\Phi_A$ is then:

\begin{thm}\label{T1}
Let $\ww\in C^1([0,T])$ and $\rho\in (0,1)$. Then the  mapping
$$
\Phi: C^\rho\big([0,T],\mathcal{H}(E_1,E_0)\big)\longrightarrow \mathcal{L}(E_0,E_1)\,,\quad A\mapsto \Phi_A\,,
$$
is locally Lipschitz continuous. In fact, $\Phi:\mathcal{A}\to\mathcal{L}(E_0,E_1)$ is uniformly Lipschitz continuous, where~$\mathcal{A}$ satisfies \eqref{stern}.
\end{thm}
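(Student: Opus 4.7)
The plan is to follow the construction of the evolution operator $U_A$ in~\cite{LQPP} closely, to reuse the decomposition $\Phi_A=\Phi_{A,1}+\Phi_{A,2}$ from~\eqref{desc1} together with the integration-by-parts identities \eqref{p1} and~\eqref{p3} derived in the proof of Proposition~\ref{P1}, and to convert the desired $\mathcal{L}(E_0,E_1)$-bound into estimates in $\mathcal{L}(E_0)$ by pre-composing with $A(0)^{-1}$ (respectively $A(t)^{-1}$). After a harmless shift of the spectra we may assume $0\in\rho(A(t))$ for every $t\in[0,T]$ and every $A\in\mathcal{A}$, and then~\eqref{stern} supplies, uniformly on $\mathcal{A}$, constants for $\|A(t)^{-1}\|_{\mathcal{L}(E_0,E_1)}$, $\|e^{\tau A(s)}\|_{\mathcal{L}(E_0)}$, and $\tau\,\|e^{\tau A(s)}\|_{\mathcal{L}(E_0,E_1)}$.

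For the first summand I would split
\[
\Phi_{A,1}-\Phi_{B,1}=A(0)^{-1}\bigl(A(0)\Phi_{A,1}-B(0)\Phi_{B,1}\bigr)+\bigl(A(0)^{-1}-B(0)^{-1}\bigr)\,B(0)\Phi_{B,1}\,.
\]
Via $A(0)^{-1}-B(0)^{-1}=A(0)^{-1}(B(0)-A(0))B(0)^{-1}$ and the uniform bound on $B(0)\Phi_{B,1}$ in $\mathcal{L}(E_0)$ coming from~\eqref{p1}, the second summand delivers an $\mathcal{L}(E_0,E_1)$-bound of order $\vert\vert\vert A-B\vert\vert\vert_\rho$. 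For the first summand, \eqref{p1} reduces matters to estimating $\|e^{tA(0)}-e^{tB(0)}\|_{\mathcal{L}(E_0)}$ integrated against $\ww$ and $\ww'$, which I would obtain from the Dunford representation
\[
e^{tA(0)}-e^{tB(0)}=\frac{1}{2\pi i}\int_\Gamma e^{t\lambda}(\lambda-A(0))^{-1}\bigl(A(0)-B(0)\bigr)(\lambda-B(0))^{-1}\,\rd\lambda
\]
along a Hankel contour avoiding the origin, using the resolvent estimates $\|(\lambda-A(0))^{-1}\|_{\mathcal{L}(E_0)}\le\kappa/|\lambda|$ and $\|(\lambda-B(0))^{-1}\|_{\mathcal{L}(E_0,E_1)}\le\kappa$. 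This produces at worst a factor $1+|\log t|$, which is integrable against $\ww'\in C([0,T])$ and harmless at $t=T$.

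For the more delicate summand $\Phi_{A,2}$ I would use the identity~\eqref{p3} and its $B$-analogue to write $\Phi_{A,2}-\Phi_{B,2}$ as a sum of three integrals, each already ending in $A(t)^{-1}\in\mathcal{L}(E_0,E_1)$ or $B(t)^{-1}\in\mathcal{L}(E_0,E_1)$, so that after using $A(t)^{-1}-B(t)^{-1}=A(t)^{-1}(B(t)-A(t))B(t)^{-1}$ the task reduces to bounds for $\|w_A(t,s)-w_B(t,s)\|_{\mathcal{L}(E_0)}$. For the latter I would exploit the series $w_A=\sum_{n\ge 1}k_A^{*n}$ with $k_A(t,s)=[A(t)-A(s)]a_A(t,s)$. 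A Dunford argument as above yields the Lipschitz continuity of $a_A$ in $A$, and the algebraic identity
\[
k_A-k_B=[A(t)-A(s)](a_A-a_B)+\bigl[(A-B)(t)-(A-B)(s)\bigr]\,a_B
\]
then gives $\|k_A(t,s)-k_B(t,s)\|_{\mathcal{L}(E_0)}\le C(t-s)^{\rho-1}\,\vert\vert\vert A-B\vert\vert\vert_\rho$ after absorbing the logarithmic factor into the H\"older singularity. The Beta-integral bookkeeping used in the proof of~\cite[II.Lemma~4.3.1]{LQPP} then propagates this bound inductively to $k_A^{*n}-k_B^{*n}$ with summable constants, and hence to $w_A-w_B$.

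The main obstacle I expect is this last piece: each $k_A$ already carries a singularity of order $(t-s)^{\rho-1}$ and each difference $a_A-a_B$ picks up a further logarithmic factor, so the iterated convolutions require careful bookkeeping to produce constants that are both summable in $n$ and uniform on $\mathcal{A}(\kappa,\omega,\eta)$. All the remaining compositional estimates and the handling of $\Phi_{A,1}$ are then routine once this series estimate is in place.
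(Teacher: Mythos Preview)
Your proposal is correct and follows essentially the same route as the paper: the same decomposition via \eqref{p1} and \eqref{p3}, the same resolvent identity $A^{-1}-B^{-1}=A^{-1}(B-A)B^{-1}$, and the same telescoping sum for $k_A^{*n}-k_B^{*n}$ controlled by Beta-integral bookkeeping. The one refinement worth noting is that the paper uses a $t$-dependent Dunford contour $\Gamma_t$ of inner radius $t^{-1}$ (Lemma~\ref{A}) to obtain the clean estimate $t^j\|e^{tA}-e^{tB}\|_{\mathcal{L}(E_0,E_j)}\le M\|A-B\|_{\mathcal{L}(E_1,E_0)}$ with no logarithmic loss, which eliminates precisely the ``main obstacle'' you anticipate and makes the subsequent convolution estimate for $w_A-w_B$ (Proposition~\ref{D}) entirely routine.
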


The proof of Theorem~\ref{T1} relies on a careful analysis of the construction of the evolution operator~$U_A$ from~\cite{LQPP} (see \eqref{UA}-\eqref{wA}), for which we require some preparatory steps. \medskip

In the following, we denote by $c$ (or $c_i$, $i\in\N$) constants depending on $\rho,$ $\omega,$ $\kappa,$ $\eta$, and $T$,  whose value may change from one occurrence to another.

\begin{lem}\label{A}
Given $\kappa\ge 1$ and $\omega>0$, there is $M>0$ depending on these parameters such that
\begin{equation}\label{e1s}t^j \|e^{t A}-e^{tB}\|_{\mathcal{L}(E_0,E_j)}\le M \|A-B\|_{\mathcal{L}(E_1,E_0)}\,,\qquad t\in [0,T]\,,\quad j=0,1\,,
\end{equation}
for $A,B\in \mathcal{H}(E_1,E_0;\kappa,\omega)$.
\end{lem}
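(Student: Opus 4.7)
The plan is to exploit the Dunford integral representation of the two semigroups combined with the second resolvent identity. Since $A,B\in\mathcal{H}(E_1,E_0;\kappa,\omega)$, standard analytic semigroup theory (cf.\ \cite{LQPP}) furnishes a common sector $\Sigma\subset\mathbb{C}$, depending only on $\kappa$ and $\omega$, that lies in the resolvent sets of both operators, together with uniform estimates
$$
\|(\lambda-A)^{-1}\|_{\mathcal{L}(E_0)}\le\frac{C_0}{1+|\lambda-\omega|},\qquad \|(\lambda-A)^{-1}\|_{\mathcal{L}(E_0,E_1)}\le C_0,
$$
for $\lambda$ in the complement of $\Sigma$, with $C_0=C_0(\kappa,\omega)$, and analogously for $B$. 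Combining the Dunford formula with the identity $(\lambda-A)^{-1}-(\lambda-B)^{-1}=(\lambda-A)^{-1}(A-B)(\lambda-B)^{-1}$, I write
$$
e^{tA}-e^{tB}=\frac{1}{2\pi i}\int_{\Gamma}e^{t\lambda}\,(\lambda-A)^{-1}(A-B)(\lambda-B)^{-1}\,d\lambda,
$$
where $\Gamma$ is a suitable Hankel-type contour enclosing $\Sigma$.

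For $j=1$, I would bound the integrand uniformly in $\lambda$ using only the $\mathcal{L}(E_0,E_1)$-bound on each resolvent:
$$
\|(\lambda-A)^{-1}(A-B)(\lambda-B)^{-1}\|_{\mathcal{L}(E_0,E_1)}\le C_0^{2}\,\|A-B\|_{\mathcal{L}(E_1,E_0)}.
$$
Using a fixed Hankel contour and the classical estimate $\int_\Gamma|e^{t\lambda}|\,|d\lambda|\le C/t$, the $j=1$ conclusion then follows.

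For $j=0$, the natural estimate on the integrand reads
$$
\|(\lambda-A)^{-1}(A-B)(\lambda-B)^{-1}\|_{\mathcal{L}(E_0)}\le\frac{C_0^{2}\,\|A-B\|_{\mathcal{L}(E_1,E_0)}}{1+|\lambda-\omega|}.
$$
The main obstacle is that a \emph{fixed} contour yields only $\int_\Gamma|e^{t\lambda}|/(1+|\lambda-\omega|)\,|d\lambda|\sim|\log t|$ as $t\to 0^+$, which is not uniform. To circumvent this I would deform to a $t$-dependent Hankel contour $\Gamma_t$ whose innermost radius is $r_t:=\max\{1,1/t\}$: after substituting $u=tr$ on the two rays, the ray contribution reduces to the $t$-independent convergent integral $e^{t\omega}\int_1^\infty e^{-u|\cos\psi|}/u\,du$ (with $\psi\in(\pi/2,\pi)$ the Hankel angle), while the arc of radius $r_t$ contributes at most $2\psi\,e^{T\omega+1}$ uniformly in $t$. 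Since the case $t=0$ is trivial, this produces $\|e^{tA}-e^{tB}\|_{\mathcal{L}(E_0)}\le M\,\|A-B\|_{\mathcal{L}(E_1,E_0)}$ for all $t\in[0,T]$, completing the proof.
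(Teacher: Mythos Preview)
Your argument is correct and is essentially the paper's proof: Dunford contour representation of $e^{tA}-e^{tB}$, second resolvent identity, uniform resolvent bounds from $\mathcal{H}(E_1,E_0;\kappa,\omega)$, and a $t$-dependent Hankel contour scaled like $1/t$ to get the uniform estimate. The paper handles both $j=0,1$ at once with the scaled contour $\Gamma_t$ (rays from radius $t^{-1}$ plus arc of radius $t^{-1}$) and the single pointwise bound $\|e^{t\lambda}[(\lambda-A)^{-1}-(\lambda-B)^{-1}]\|_{\mathcal{L}(E_0,E_j)}\le c_0^2 e^{t\mathrm{Re}\lambda}|\lambda|^{j-1}\|A-B\|_{\mathcal{L}(E_1,E_0)}$, referring to \cite[II.Lemma~4.1.1]{LQPP} for the resulting contour estimate; your split into ``fixed contour for $j=1$'' and ``scaled contour for $j=0$'' is a cosmetic variation of the same computation.

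One wording issue to clean up: you say $\Sigma$ ``lies in the resolvent sets of both operators'' but then take the resolvent estimates ``for $\lambda$ in the complement of $\Sigma$'' and let $\Gamma$ ``enclose $\Sigma$''. These are inconsistent. In the standard setup (and in the paper), $\Sigma$ is the sector contained in $\rho(A)\cap\rho(B)$, the resolvent bounds hold for $\lambda\in\Sigma$, and $\Gamma\subset\Sigma$ surrounds the spectrum, not $\Sigma$.
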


\begin{proof}
It follows from~\cite[I.Proposition~1.4.1, I.Corollary~1.4.3]{LQPP} that there are $\vartheta\in (0,\pi/2)$ and $c_0>0$ such that
\begin{equation}\label{b1}
\omega+\Sigma_\vartheta\subset \rho(A) \qquad\text{and}\qquad (1+\vert\lambda\vert)^{1-j}\|  (\lambda-(A-\omega))^{-1}\|_{\mathcal{L}(E_0,E_j)}\le c_0\,,\quad \lambda\in \Sigma_\vartheta\,,
\end{equation}
for $j=0,1$ and every $A\in \mathcal{H}(E_1,E_0;\kappa,\omega)$. Here, $\rho(A)$ is the resolvent set of $A$ and 
\begin{equation*}
\Sigma_\vartheta:=\{z\in\mathbb{C}\,:\, |{\rm arg\,}(z)|\leq \vartheta+\pi/2\}\cup\{0\}\,.
\end{equation*}

Since $e^{tA}=e^{t\omega}e^{t(A-\omega)}$, we may assume without loss of generality that $A$ and $B$ satisfy \eqref{b1} with~$\omega=0$.
Moreover, for each $t>0$, it holds that 
\begin{equation}\label{b2}
e^{tA}-e^{tB}=\frac{1}{2\pi i}\int_{\Gamma_t} e^{t\lambda}\big[ (\lambda-A)^{-1}-(\lambda-B)^{-1}\big]\,\rd \lambda\,,\quad t>0\,,
\end{equation}
 where $\Gamma_t=\Gamma_t^++S_t+\Gamma_t^-$ is the piecewise smooth path with
\[
\Gamma_t^\pm:=\{re^{\pm i(\vartheta+\pi/2)}\,:\, t^{-1}\leq r<\infty\}\qquad\text{and}\qquad S_t:=\{t^{-1}e^{i\varphi}\,:\, |\varphi|\leq\vartheta+\pi/2\}\,.
\]
Since 
$$
(\lambda-A)^{-1}-(\lambda-B)^{-1}=(\lambda-A)^{-1}(A-B)(\lambda-B)^{-1},\quad\lambda\in\Sigma_\vartheta\,, 
$$
for $j\in\{0,1\}$, $\lambda\in\Sigma_\vartheta$, and $t>0$  we have, due to~\eqref{b1}, that
\begin{align}\label{b4}
\| e^{t\lambda}\big[ (\lambda-A)^{-1}-(\lambda-B)^{-1}\big]\|_{\mathcal{L}(E_0,E_j)}
&\le c_0^2\,e^{t\mathrm{Re}\,\lambda}\,\vert\lambda\vert^{j-1}\, \|A-B\|_{\mathcal{L}(E_1,E_0)}\,.
\end{align}
for $\lambda\in \dot\Sigma_\vartheta$ and $t>0$. Now, the assertion follows by estimating \eqref{b2} with the help of~\eqref{b4}  (see the proof of~\cite[II.Lemma~4.1.1]{LQPP} for details).
\end{proof}

Given $\alpha\in \R$, let~$\mathfrak{K}(E_0,\alpha)$ be the  Banach space  consisting of those continuous 
function $$k:\{(s,t)\,:\, 0\leq s<t\leq T\}\to \mathcal{L}(E_0)$$ satisfying
$$
\|k\|_{\mathfrak{K}(E_0,\alpha)}:=\sup_{0\le s<t\le T} (t-s)^\alpha \|k(t,s)\|_{\mathcal{L}(E_0)}<\infty\,.
$$

\begin{lem}\label{BC}
Defining $k_A(t,s)$ by \eqref{wA}, it holds that $k_A\in\mathfrak{K}(E_0,1-\rho)$ for $A\in\mathcal{A}$. Moreover, there is $\ell_0>0$ (depending on $\mathcal{A}$) such that
\begin{equation}\label{e3}
\|k_A\|_{\mathfrak{K}(E_0,1-\rho)}\le \ell_0
\end{equation}
and
\begin{equation}\label{e4}
\|k_A-k_B\|_{\mathfrak{K}(E_0,1-\rho)}\le \ell_0\vert\vert\vert A-B\vert\vert\vert_\rho
\end{equation}
for $A,B\in\mathcal{A}$.
\end{lem}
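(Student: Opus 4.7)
The plan is to write $k_A(t,s)=[A(t)-A(s)]\,e^{(t-s)A(s)}$ as a composition and control each factor separately. The key smoothing estimate I will use is that for $A\in\mathcal{H}(E_1,E_0;\kappa,\omega)$ one has
\[
\|e^{tA}\|_{\mathcal{L}(E_0,E_1)}\le \frac{c}{t}\,,\qquad t\in (0,T]\,,
\]
with $c=c(\kappa,\omega,T)$; this follows from the standard Dunford integral representation together with the resolvent bounds~\eqref{b1} (and is in~\cite[II.Section~1]{LQPP}). Combined with the Hölder condition $\|A(t)-A(s)\|_{\mathcal{L}(E_1,E_0)}\le \vert\vert\vert A\vert\vert\vert_\rho (t-s)^\rho$, this immediately gives, for $0\le s<t\le T$,
\[
(t-s)^{1-\rho}\|k_A(t,s)\|_{\mathcal{L}(E_0)}\le (t-s)^{1-\rho}\,\vert\vert\vert A\vert\vert\vert_\rho\,(t-s)^\rho\,\frac{c}{t-s}\le c\,\eta\,,
\]
which yields \eqref{e3} with $\ell_0$ depending only on $\mathcal{A}$.

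For the Lipschitz estimate \eqref{e4} the first step is the algebraic decomposition
\begin{align*}
k_A(t,s)-k_B(t,s)&=\bigl\{[A(t)-B(t)]-[A(s)-B(s)]\bigr\}\,e^{(t-s)A(s)}\\
&\quad +[B(t)-B(s)]\bigl(e^{(t-s)A(s)}-e^{(t-s)B(s)}\bigr)\,.
\end{align*}
The first summand I estimate exactly as above, replacing $A$ by $A-B$, obtaining the bound
\[
\|\{[A(t)-B(t)]-[A(s)-B(s)]\}\,e^{(t-s)A(s)}\|_{\mathcal{L}(E_0)}\le c\,(t-s)^{\rho-1}\,\vert\vert\vert A-B\vert\vert\vert_\rho\,.
\]
For the second summand I apply Lemma~\ref{A} (with $j=1$) to obtain
\[
\|e^{(t-s)A(s)}-e^{(t-s)B(s)}\|_{\mathcal{L}(E_0,E_1)}\le \frac{M}{t-s}\,\|A(s)-B(s)\|_{\mathcal{L}(E_1,E_0)}\,,
\]
and then bound $\|B(t)-B(s)\|_{\mathcal{L}(E_1,E_0)}\le \eta\,(t-s)^\rho$ and $\|A(s)-B(s)\|_{\mathcal{L}(E_1,E_0)}\le \vert\vert\vert A-B\vert\vert\vert_\rho$. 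Multiplying these through yields the same type of bound $c\,\eta\,M\,(t-s)^{\rho-1}\,\vert\vert\vert A-B\vert\vert\vert_\rho$. Summing and multiplying by $(t-s)^{1-\rho}$ gives \eqref{e4}, possibly enlarging $\ell_0$.

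\textbf{Main obstacle.} There is no real technical obstacle here: once Lemma~\ref{A} is in hand, everything reduces to combining the singular smoothing bound $\|e^{tA}\|_{\mathcal{L}(E_0,E_1)}\lesssim t^{-1}$ with the Hölder regularity of $A$. The only point that requires (mild) care is the uniformity of constants over the class $\mathcal{A}$: all occurrences of $c$ must depend only on $(\kappa,\omega,\eta,\rho,T)$ and not on the individual operator $A\in\mathcal{A}$. This is ensured by the fact that the resolvent estimates \eqref{b1} and hence the smoothing estimate on $e^{tA}$ are uniform over $\mathcal{H}(E_1,E_0;\kappa,\omega)$, and by the definition of $\mathcal{A}$ in \eqref{stern}.
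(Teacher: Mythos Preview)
Your proof is correct and follows essentially the same approach as the paper: the same algebraic decomposition of $k_A-k_B$, the same use of Lemma~\ref{A} for the second summand, and the same uniform smoothing bound $\|e^{tA}\|_{\mathcal{L}(E_0,E_1)}\le c/t$ for the first (the paper cites \cite[II.Lemma~4.2.1]{LQPP} for this uniform version over $\mathcal{A}$, recorded there as~\eqref{e5}). Your emphasis on the uniformity of constants over $\mathcal{A}$ is exactly the point the paper also relies on.
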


\begin{proof}
According to \cite[II.Lemma~4.2.1]{LQPP} there is $c_1>0$ such that
\begin{equation}\label{e5}
t^j\left(\|e^{t A(s)}\|_{\mathcal{L}(E_0,E_j)}+\|e^{t B(s)}\|_{\mathcal{L}(E_0,E_j)}\right)\le c_1\,,\qquad t\in (0,T]\,,\quad s\in [0,T]\,,\quad j=0,1\,.
\end{equation}
This together with assumption~\eqref{stern} implies~\eqref{e3}.

 Moreover, using~Lemma~\ref{A} and~\eqref{e5} we derive
\begin{align*}
\|(k_A-k_B)(t,s)\|_{\mathcal{L}(E_0)}&\le   \|A(t)-A(s)-B(t)+B(s)\|_{\mathcal{L}(E_1,E_0)}\,\|e^{(t-s)A(s)}\|_{\mathcal{L}(E_0,E_1)}\\
&\quad + \|B(t)-B(s)\|_{\mathcal{L}(E_1,E_0)}\,\|e^{(t-s)A(s)}-e^{(t-s)B(s)}\|_{\mathcal{L}(E_0,E_1)}\\
&\le c_1 \vert\vert\vert A-B\vert\vert\vert_\rho\, (t-s)^{\rho-1}+ \eta  M\,\|A(s)- B(s)\|_{\mathcal{L}(E_0,E_1)}\, (t-s)^{\rho-1}\\
&\le \ell_0 \vert\vert\vert A-B\vert\vert\vert_\rho\,(t-s)^{\rho-1}
\end{align*}
for $0\le s<t\le T$, hence~\eqref{e4}.
\end{proof}

\begin{rem}\label{R10}
It is worth pointing out that the Hölder seminorm of $A-B$ in \eqref{e4} (and hence throughout the subsequent analysis) appears in order to control the term $$\|A(t)-A(s)-B(t)+B(s)\|_{\mathcal{L}(E_1,E_0)}$$ in the last estimate of the proof of Lemma~\ref{BC}.
\end{rem}

We derive now the Lipschitz continuity of the mapping $A\mapsto w_A$.

\begin{prop}\label{D}
Defining $w_A$ by~\eqref{wA}, it holds that $w_A\in \mathfrak{K}(E_0,1-\rho)$ for $A\in\mathcal{A}$. Moreover,  given $\ve>0$  there is a constant $c_2=c_2(\ve)>0$ such that
\begin{equation}\label{e6}
\|w_A-w_B\|_{\mathfrak{K}(E_0,1-\rho)}\le c_2 m e^{(1+\ve)m^{1/\rho}T} \vert\vert\vert A-B\vert\vert\vert_\rho\,,\quad A,B\in\mathcal{A}\,,
\end{equation}
where $m:=2\ell_0\Gamma(\rho)$ with $\ell_0>0$ being the constant from~Lemma~\ref{BC} and $\Gamma$  the Gamma function.
\end{prop}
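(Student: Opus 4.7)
My approach is to pass from pointwise convolution bounds to a summable series in $\mathfrak{K}(E_0,1-\rho)$, combining a Gamma-function identity for iterated convolutions with a telescoping identity for the difference $k_A^{*n}-k_B^{*n}$. The core ingredient is the standard Abel-type bound
\[
\|k^{*n}(t,s)\|_{\mathcal{L}(E_0)} \;\le\; L^n\,\frac{\Gamma(\rho)^n}{\Gamma(n\rho)}\,(t-s)^{n\rho-1},\qquad 0\le s<t\le T,
\]
which I would establish by induction for any $k\in\mathfrak{K}(E_0,1-\rho)$ with $\|k\|_{\mathfrak{K}(E_0,1-\rho)}\le L$; the induction step reduces to the Beta integral $\int_s^t(t-\tau)^{\rho-1}(\tau-s)^{(n-1)\rho-1}\,\rd \tau = \Gamma(\rho)\Gamma((n-1)\rho)(t-s)^{n\rho-1}/\Gamma(n\rho)$. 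Applied with $L=\ell_0$ (Lemma~\ref{BC}), together with $(t-s)^{n\rho-1}\le T^{(n-1)\rho}(t-s)^{\rho-1}$, this already yields absolute convergence of $w_A=\sum_{n\ge 1} k_A^{*n}$ in $\mathfrak{K}(E_0,1-\rho)$ and thus $w_A\in\mathfrak{K}(E_0,1-\rho)$.

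For the Lipschitz bound I would then use the telescoping identity
\[
k_A^{*n}-k_B^{*n} \;=\; \sum_{j=0}^{n-1} k_A^{*j}*(k_A-k_B)*k_B^{*(n-1-j)}
\]
(with the convention $k^{*0}*g := g$), which is the familiar $a^n-b^n$ decomposition transported to convolution. Applying the convolution bound above to the outer factors, inserting the sharper estimate $\|k_A-k_B\|_{\mathfrak{K}(E_0,1-\rho)}\le \ell_0\,\vert\vert\vert A-B\vert\vert\vert_\rho$ from Lemma~\ref{BC} at the middle factor, and carrying out the two Beta integrals that glue the three pieces, the factors $\Gamma(j\rho)$ and $\Gamma((n-1-j)\rho)$ cancel and each summand is bounded by exactly $\ell_0^n\,\Gamma(\rho)^n(t-s)^{n\rho-1}/\Gamma(n\rho)$ times $\vert\vert\vert A-B\vert\vert\vert_\rho$, independently of $j$. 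Summing the $n$ terms yields
\[
\|(k_A^{*n}-k_B^{*n})(t,s)\|_{\mathcal{L}(E_0)} \;\le\; n\,\ell_0^n\,\frac{\Gamma(\rho)^n}{\Gamma(n\rho)}\,(t-s)^{n\rho-1}\,\vert\vert\vert A-B\vert\vert\vert_\rho.
\]

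Passing to the $\mathfrak{K}(E_0,1-\rho)$ norm via $(t-s)^{1-\rho}(t-s)^{n\rho-1}\le T^{(n-1)\rho}$ and summing over $n$, with $m=2\ell_0\Gamma(\rho)$ and $z:=(m/2)T^\rho$, the remaining task is to bound $T^{-\rho}\sum_{n\ge 1} n\, z^n/\Gamma(n\rho)$. I would close with a Mittag--Leffler asymptotic: since $\sum_{n\ge 1} z^n/\Gamma(n\rho) = zE_{\rho,\rho}(z)$ and $E_{\rho,\rho}(z)\sim \rho^{-1} z^{(1-\rho)/\rho}e^{z^{1/\rho}}$ as $z\to\infty$, polynomial prefactors may be absorbed at the cost of upgrading $1$ to $1+\ve$ in the exponent, producing $\sum_{n\ge 1} n z^n/\Gamma(n\rho)\le C(\ve)\, z\, e^{(1+\ve) z^{1/\rho}}$ uniformly for $z\ge 0$. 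Using $(m/2)^{1/\rho}< m^{1/\rho}$ then gives the claimed bound $c_2(\ve)\,m\,e^{(1+\ve)m^{1/\rho}T}\,\vert\vert\vert A-B\vert\vert\vert_\rho$. The main obstacle is precisely this last step: the combinatorial factor $n$ produced by telescoping must be absorbed into a Mittag--Leffler-type exponential, and the slack $\ve>0$ in the exponent is genuinely unavoidable because of the polynomial prefactor in the asymptotic of $E_{\rho,\rho}$.
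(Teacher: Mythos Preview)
Your proposal is correct and follows essentially the same route as the paper: the telescoping identity for $k_A^{*n}-k_B^{*n}$, the iterated Beta/Gamma convolution bound, and a Mittag--Leffler asymptotic to sum the series. The only difference is cosmetic: the paper absorbs the combinatorial factor $n$ via $n\le 2^n$ (this is why $m=2\ell_0\Gamma(\rho)$ carries the factor $2$) and then bounds $\sum_{n\ge 1}[m(t-s)^\rho]^{n-1}/\Gamma(n\rho)$ directly, whereas you keep the $n$ and absorb it through the $(1+\ve)$-slack in the exponential, recovering the stated constant $m$ only at the end via $(m/2)^{1/\rho}<m^{1/\rho}$.
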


\begin{proof} 
Clearly, $w_A\in \mathfrak{K}(E_0,1-\rho)$ for $A\in\mathcal{A}$ by~\eqref{wAs}. 
The proof of the Lipschitz property proceeds along the same lines as \cite[II.Section~3.1,Section~3.2]{LQPP}. 
We write
\begin{equation}\label{e7}
w_A-w_B=\sum_{n=1}^\infty \Big(\sum_{j=1}^{n} \underbrace{k_B*\ldots *k_B}_{j-1\text{ times}}*(k_A-k_B)*k_A*\ldots *k_A\Big)\,.
\end{equation}
It is obvious that, for $k\in \mathfrak{K}(E_0,\alpha)$ and $h\in \mathfrak{K}(E_0,\beta)$ with $\alpha,\beta<1$,
$$
\|k*h(t,s)\|_{\mathcal{L}(E_0)}\le \|k\|_{\mathfrak{K}(E_0,\alpha)}\, \|h\|_{\mathfrak{K}(E_0,\beta)}\,\mathsf{B}(1-\alpha,1-\beta)(t-s)^{1-\alpha-\beta}\,,\quad 0\le s<t\le T\,,
$$
where $\mathsf{B}$ denotes the Beta function.
By induction, it then follows that for $k_j\in \mathfrak{K}(E_0,\alpha)$  satisfying~$\|k_j\|_{\mathfrak{K}(E_0,\alpha)}\le \ell_0$, $1\leq j\leq n$,  one has
\begin{align}\label{e9}
\|k_1*\ldots*k_n(t,s)\|_{\mathcal{L}(E_0)}\le \frac{(\ell_0\Gamma(1-\alpha))^n}{\Gamma(n(1-\alpha))} (t-s)^{n(1-\alpha)-1}\,,\qquad n\ge 1\,,\quad 0\le s<t\le T\,. 
\end{align}
Consequently, using~\eqref{e3},~\eqref{e4}, and \eqref{e9}, we deduce for $n\ge 1$ and $0\le s<t\le T$ that
\begin{align*}
&\Bigg\|\sum_{j=1}^{n} \underbrace{k_B*\ldots *k_B}_{j-1}*(k_A-k_B)* k_A*\ldots *k_A (t,s)\Bigg\|_{\mathcal{L}(E_0)}\\
&\qquad\le n\frac{(\ell_0\Gamma(\rho) )^n \vert\vert\vert A-B\vert\vert\vert_\rho}{\Gamma(n\rho)} (t-s)^{n\rho-1}\le \frac{[m(t-s)^{\rho}]^{n-1} }{\Gamma(n\rho)} m(t-s)^{\rho-1}\vert\vert\vert A-B\vert\vert\vert_\rho\, ,
\end{align*}
where we used that $n\le 2^n$. Since 
$$
\sum_{n=1}^{\infty }\frac{[m(t-s)^{\rho}]^{n-1} }{\Gamma(n\rho)}\le c(\rho,\ve)e^{(1+\ve)m^{1/\rho}(t-s)}
$$
for a fixed $\ve>0$ (see the proof of \cite[II.Lemma~3.2.1]{LQPP}), it follows from~\eqref{e7} that
\begin{align*}
\|(w_A-w_B)(t,s)\|_{\mathcal{L}(E_0)}\le c(\rho,\ve)e^{(1+\ve)m^{1/\rho}(t-s)} m(t-s)^{\rho-1}\vert\vert\vert A-B\vert\vert\vert_\rho 
\end{align*}
for $0\le s<t\le T$. This yields the assertion.
\end{proof}

Recalling the formula
$$
U_A=a_A+a_A*w_A\,,
$$
it is now an immediate consequence of~\eqref{wAs}, Lemma~\ref{BC}, and Proposition~\ref{D} that the evolution operator $U_A$ depends continuously on $A$ in $\mathcal{L}(E_0)$
 (note that this is not included in~\cite[II.Lemma~5.1.4]{LQPP} and that it need not be true in $\mathcal{L}(E_0,E_1)$):

\begin{cor}\label{E}
There exists a constant $c_3>0$ such that, for $A,B\in\mathcal{A}$,
\begin{equation}\label{e10}
\|(U_A-U_B)(t,s)\|_{\mathcal{L}(E_0)}\le c_3  \vert\vert\vert A-B\vert\vert\vert_\rho\,,\quad 0\le s\le t\le T\,.
\end{equation}
\end{cor}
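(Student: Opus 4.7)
My plan is to exploit the decomposition $U_A = a_A + a_A * w_A$ (valid for $t>s$, with the case $t=s$ being trivial since $U_A(t,t)=I$ for every admissible $A$). Writing
\begin{equation*}
U_A - U_B = (a_A - a_B) + (a_A - a_B)*w_A + a_B*(w_A - w_B)\,,
\end{equation*}
I will estimate each of the three terms separately in $\mathcal{L}(E_0)$ using the tools already established.

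For the first term I apply Lemma~\ref{A} with $j=0$ (replacing $A$ and $B$ there by $A(s),B(s)\in\mathcal{H}(E_1,E_0;\kappa,\omega)$) to conclude that
\begin{equation*}
\|a_A(t,s)-a_B(t,s)\|_{\mathcal{L}(E_0)} \le M\,\|A(s)-B(s)\|_{\mathcal{L}(E_1,E_0)}\le M\vert\vert\vert A-B\vert\vert\vert_\rho\,,
\end{equation*}
which is already of the desired form. For the convolution terms I first record a uniform bound of the form $\|w_A\|_{\mathfrak{K}(E_0,1-\rho)}\le C_w$ on $\mathcal{A}$: this follows by summing \eqref{e9} applied to $k_A$ (with the uniform bound \eqref{e3}) and invoking the same scalar series estimate used in the proof of Proposition~\ref{D}. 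Combined with the uniform estimate $\|a_B(t,\tau)\|_{\mathcal{L}(E_0)}\le c_1$ from \eqref{e5} (with $j=0$), it then suffices to integrate the singular kernel $(\tau-s)^{\rho-1}$ over $[s,t]\subset[0,T]$, which contributes a factor at most $T^\rho/\rho$.

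More precisely, for the second term,
\begin{equation*}
\bigl\|(a_A-a_B)*w_A(t,s)\bigr\|_{\mathcal{L}(E_0)}\le \int_s^t M\vert\vert\vert A-B\vert\vert\vert_\rho\cdot C_w(\tau-s)^{\rho-1}\,\rd \tau \le \tfrac{MC_w T^\rho}{\rho}\vert\vert\vert A-B\vert\vert\vert_\rho\,,
\end{equation*}
and for the third term, using Proposition~\ref{D} for a fixed $\ve>0$,
\begin{equation*}
\bigl\|a_B*(w_A-w_B)(t,s)\bigr\|_{\mathcal{L}(E_0)}\le \int_s^t c_1\cdot c_2 m e^{(1+\ve)m^{1/\rho}T}\vert\vert\vert A-B\vert\vert\vert_\rho (\tau-s)^{\rho-1}\,\rd\tau\,,
\end{equation*}
which is again controlled by a constant multiple of $\vert\vert\vert A-B\vert\vert\vert_\rho$. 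Adding the three contributions produces the required Lipschitz estimate with a constant $c_3=c_3(\rho,\omega,\kappa,\eta,T)$.

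I do not expect any genuine obstacle here: the hard analytical work (the Lipschitz dependence of $w_A$ on $A$ and the kernel estimates for $a_A-a_B$) has been carried out in Lemma~\ref{A}, Lemma~\ref{BC}, and Proposition~\ref{D}. The only point to be mildly careful about is to note that the uniform bound $\|w_A\|_{\mathfrak{K}(E_0,1-\rho)}\le C_w$ on $\mathcal{A}$ is not stated as a separate lemma but is implicit in the proof of Proposition~\ref{D} (taking $B\equiv 0$ in spirit, or simply summing \eqref{e9}); one should make this explicit to justify the bound on $(a_A-a_B)*w_A$.
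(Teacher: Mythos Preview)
Your proposal is correct and follows essentially the same route as the paper: the paper simply states that the estimate is an immediate consequence of the formula $U_A=a_A+a_A*w_A$ together with \eqref{wAs}, Lemma~\ref{BC}, and Proposition~\ref{D}, which is precisely the decomposition and set of ingredients you spell out (with Lemma~\ref{A} supplying the bound on $a_A-a_B$). Your explicit remark that the uniform bound $\|w_A\|_{\mathfrak{K}(E_0,1-\rho)}\le C_w$ on $\mathcal{A}$ must be extracted from \eqref{e3} and \eqref{e9} is a helpful clarification of what the paper leaves implicit in citing \eqref{wAs}.
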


We are now in a position to establish the Lipschitz continuity of the operator $A\mapsto \Phi_A$ stated in Theorem~\ref{T1}.

\subsection*{Proof of Theorem~\ref{T1}}

Let $A,B\in \mathcal{A}$.
Since
$U_A(t,s)=e^{\omega(t-s)}U_{A-\omega}(t,s)$
for $\omega\in\R$, we may assume without loss of generality that $\omega=0\in \rho(A(t))\cap\rho(B(t))$ for $t\in [0,T]$ (see ~\eqref{b1}) and, in view of the identity 
$$A(t)^{-1}-B(t)^{-1}=A(t)^{-1}(B(t)-A(t))B(t)^{-1}\,,$$ that
\begin{equation}\label{i1}
\|A(t)^{-1}-B(t)^{-1}\|_{\mathcal{L}(E_0,E_1)}\le c \,\|A(t)-B(t)\|_{\mathcal{L}(E_1,E_0)}\,,\quad t\in [0,T]\,.
\end{equation}
Now, write again 
$\Phi_A=\Phi_{A,1}+\Phi_{A,2}$
and recall from~\eqref{p1}  that 
\begin{align}
\Phi_{A,1}=-\ww(0)A(0)^{-1}+\ww(T)A(0)^{-1}\,e^{TA(0)}-A(0)^{-1}\int_0^T{\ww'(t)}\,e^{tA(0)}\, \rd t\,,\label{i2}
\end{align}
while~\eqref{p3} entails that
\begin{equation}\label{i3}
\begin{aligned}
\Phi_{A,2}&=-\int_0^T \ww'(t) \int_0^t A(s)^{-1} e^{(t-s)A(s)}w_A(s,0)\,\rd s\,\rd t \\
&\quad +\ww(T)\int_0^T A(s)^{-1} e^{(T-s)A(s)}w_A(s,0)\,\rd s-\int_0^T \ww(t)A(t)^{-1}w_A(t,0)\,\rd t\,.
\end{aligned}
\end{equation}
Note that
\begin{align}
 \|A(0)^{-1}e^{tA(0)}-B(0)^{-1}e^{tB(0)}\|_{\mathcal{L}(E_0,E_1)}&\le  \|A(0)^{-1}-B(0)^{-1}\|_{\mathcal{L}(E_0,E_1)} \|e^{tA(0)}\|_{\mathcal{L}(E_0)}\nonumber\\
&\quad+  \|B(0)^{-1}\|_{\mathcal{L}(E_0,E_1)}\|e^{tA(0)}-e^{tB(0)}\|_{\mathcal{L}(E_0)}\nonumber\\
&\le c  \|A(0)-B(0)\|_{\mathcal{L}(E_1,E_0)}\label{i4}
\end{align}
for $t\in [0,T]$ due  to \eqref{i1} and Lemma~\ref{A}, while
\begin{align}
 \|A(s)^{-1} e^{(t-s)A(s)}&w_A(s,0)-B(s)^{-1} e^{(t-s)B(s)}w_B(s,0)\|_{\mathcal{L}(E_0,E_1)}\nonumber\\
&\le  \|A(s)^{-1}-B(s)^{-1}\|_{\mathcal{L}(E_0,E_1)} \|e^{(t-s)A(s)}\|_{\mathcal{L}(E_0)} \|w_A(s,0)\|_{\mathcal{L}(E_0)} \nonumber\\
&\quad +\|B(s)^{-1}\|_{\mathcal{L}(E_0,E_1)} \|e^{(t-s)A(s)}-e^{(t-s)B(s)}\|_{\mathcal{L}(E_0)} \|w_A(s,0)\|_{\mathcal{L}(E_0)}\nonumber
\\
&\quad +\|B(s)^{-1}\|_{\mathcal{L}(E_0,E_1)} \|e^{(t-s)B(s)}\|_{\mathcal{L}(E_0)} \|w_A(s,0)-w_B(s,0)\|_{\mathcal{L}(E_0)}\nonumber\\
&\le c  \big(\|A(s)-B(s)\|_{\mathcal{L}(E_1,E_0)} + \vert\vert\vert A-B\vert\vert\vert_\rho\big) s^{\rho-1}\label{i5}
\end{align}
for $0< s< t\le T$ due to~\eqref{b1}, \eqref{e5}, \eqref{i1},  Lemma~\ref{A}, and Proposition~\ref{D}. 
Recalling~\eqref{b}, we infer from~\eqref{i1}-\eqref{i5} that
$$
\|\Phi_A-\Phi_B\|_{\mathcal{L}(E_0,E_1)}\le c||| A-B|||_\rho
$$
for some constant $c>0$ independent of $A,B\in\mathcal{A}$. This proves Theorem~\ref{T1}.
\qed

%%%%%%%%%%%%%%%%%%%%%%%%%%%%%%%%%%%%%%%%%%%%%%%%%%%%%%%%
%%%%%%%%%%%%%%%%%%%%%%%%%%%%%%%%%%%%%%%%%%%%%%%%%%%%%%%%
\section{Proof of Theorem~\ref{MT1}}\label{Sec3}
%%%%%%%%%%%%%%%%%%%%%%%%%%%%%%%%%%%%%%%%%%%%%%%%%%%%%%%%
%%%%%%%%%%%%%%%%%%%%%%%%%%%%%%%%%%%%%%%%%%%%%%%%%%%%%%%%

In this section we establish the proof of Theorem~\ref{MT1}, but postpone it to the end of this section since we still require some preliminary results.

%%%%%%%%%%%%%%%%%%%%%%%%%%%%%%%%
%%%%%%%%%%%%%%%%%%%%%%%%%%%%%%%%
\subsection{Preliminaries}
%%%%%%%%%%%%%%%%%%%%%%%%%%%%%%%%
%%%%%%%%%%%%%%%%%%%%%%%%%%%%%%%%

Assuming now~\eqref{EQ:A}, we  derive stability estimates for the evolution operators~$U_{A(u)}$ and the operator~$\Phi_{A(u)}$ with respect to $u$ in the interpolation-extrapolation scale.  \\

The following results will play a key role in the subsequent analysis.

\begin{prop}\label{consequences}
Assume~\eqref{EQ:A}. Then, the embddings
\begin{equation}\label{c0}
E_{1+\alpha_0}{\hookrightarrow} E_{1+\alpha}{\hookrightarrow}E_{\alpha_0}{\hookrightarrow}  E_\alpha
\end{equation} 
are dense and compact. Moreover, there are $r\in (0,r_0)$, $\kappa\ge 1$, and $\omega>0$ such that 
\begin{equation}\label{c1}
A(v)\in \mathcal{H}(E_{1+\theta},E_\theta;\kappa,\omega)\,,\qquad v\in \bar{\mathbb{B}}_{E_\beta}(0,r)\,,\quad  \theta\in\{0,\alpha,\alpha_0\}\,.
\end{equation}
In fact, the $E_\theta$-realization $A_\theta(v)$ of $A(v)\in\mathcal{H}(E_1,E_0)$ coincides with $A(v)\vert_{E_{1+\theta}}$ for $\theta\in\{\alpha,\alpha_0\}$.
Furthermore, given $\rho\in(0,1)$ and $u\in \bar{\mathbb{B}}_{C^\rho([0,T],E_\beta)}(0,r)$, there is a unique evolution operator $$U_{A_\theta(u)}(t,s)=U_{A(u)}(t,s)\vert_{E_\theta}\,,\quad 0\le s\le t\le T\,,$$ for $A_\theta(u)$ on $E_\theta$ with regularity subspace $E_{1+\theta}$. Moreover, it holds that
\begin{equation}\label{c2}
\Phi_{A(u)}\in\mathcal{L}is(E_\alpha,E_{1+\alpha})\,,\quad u\in \bar{\mathbb{B}}_{C^\rho([0,T],E_\beta)}(0,r)\,,
\end{equation}
and there is a constant $c_0>0$ (depending only on $r$, $\rho$, $\kappa$, $\omega$, $T$, $\alpha$, and $\beta$) such that
\begin{equation}\label{c3}
\| \Phi_{A(u)}^{-1}-\Phi_{A(\bar u)}^{-1}\|_{\mathcal{L}(E_{1+\alpha},E_\alpha)}\le c_0 \|u-\bar u\|_{C^\rho([0,T], E_\beta)}\,,\quad u, \bar u\in \bar{\mathbb{B}}_{C^\rho([0,T],E_\beta)}(0,r)\,.
\end{equation}
\end{prop}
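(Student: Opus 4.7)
The plan is to assemble the statement from three ingredients: (i) the interpolation--extrapolation scale theory from \cite{LQPP}; (ii) the linear-theory results of Section~\ref{Sec2}, specifically Proposition~\ref{P1} and Theorem~\ref{T1}, applied on the shifted scale $(E_\alpha,E_{1+\alpha})$; and (iii) the $C^{1-}$/$C^{2-}$ regularity of $A$ from \eqref{a5}, which is used to convert Lipschitz dependence in $v\in E_\beta$ into H\"older control in $t$.

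The density and compactness of the embeddings in \eqref{c0} follow directly from the construction of the interpolation--extrapolation scale, as indicated in the footnote, see \cite[V.Theorem~1.5.1]{LQPP}. Since $A(0)\in\mathcal{H}(E_1,E_0)$, the scale transports the generator property so that $A_\theta(0)\in\mathcal{H}(E_{1+\theta},E_\theta)$ for $\theta\in\{\alpha,\alpha_0\}$; assumption \eqref{a5} (which gives $A(0)\in\mathcal{L}(E_{1+\alpha},E_\alpha)\cap\mathcal{L}(E_{1+\alpha_0},E_{\alpha_0})$) then identifies $A_\theta(0)$ with $A(0)|_{E_{1+\theta}}$. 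To pass from $v=0$ to a ball $\bar{\mathbb{B}}_{E_\beta}(0,r)$, I would use that each class $\mathcal{H}(E_{1+\theta},E_\theta;\kappa,\omega)$ is open in $\mathcal{L}(E_{1+\theta},E_\theta)$, enlarge $\kappa,\omega$ slightly, and shrink $r\in(0,r_0)$ via the local Lipschitz continuity of $A$ in each of the three norms of \eqref{a5}.

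Given $u\in\bar{\mathbb{B}}_{C^\rho([0,T],E_\beta)}(0,r)$, the composition of the $C^{1-}$ map $A$ with the $C^\rho$-curve $u$ yields $A_\theta\circ u\in C^\rho([0,T],\mathcal{H}(E_{1+\theta},E_\theta))$, so \cite[II.Corollary~4.4.2]{LQPP} furnishes the evolution operator $U_{A_\theta(u)}$ on $E_\theta$ with regularity subspace $E_{1+\theta}$; the consistency $U_{A_\theta(u)}(t,s)=U_{A(u)}(t,s)|_{E_\theta}$ follows from uniqueness using $E_\theta\hookrightarrow E_0$. Applying Proposition~\ref{P1} in the scale $(E_\alpha,E_{1+\alpha})$ (with compactness inherited from the first step and $\ww(0)\neq 0$ from \eqref{a1}) then gives $\Phi_{A(u)}\in\mathcal{L}(E_\alpha,E_{1+\alpha})$ Fredholm of index zero. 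At $u=0$, kernel triviality in $E_\alpha$ is inherited from \eqref{a6} through the embedding $E_\alpha\hookrightarrow E_0$, so $\Phi_{A(0)}\in\mathcal{L}is(E_\alpha,E_{1+\alpha})$.

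For the Lipschitz bound \eqref{c3}, the plan is to invoke Theorem~\ref{T1} in the scale $(E_\alpha,E_{1+\alpha})$, which reduces matters to the estimate
$$
\vert\vert\vert A(u(\cdot))-A(\bar u(\cdot))\vert\vert\vert_\rho\le C\|u-\bar u\|_{C^\rho([0,T],E_\beta)}
$$
in $\mathcal{L}(E_{1+\alpha},E_\alpha)$, together with the verification that $A(u(\cdot))$ lies uniformly in some fixed class $\mathcal{A}(\kappa,\omega,\eta)$ as in \eqref{stern}. The sup-norm contribution is immediate from the $C^{1-}$ part of \eqref{a5}; the H\"older seminorm --- the main obstacle, cf.\ Remark~\ref{R10} --- I would handle by writing
$$
A(u(t))-A(\bar u(t))=\int_0^1 A'\bigl(\bar u(t)+\tau(u(t)-\bar u(t))\bigr)(u(t)-\bar u(t))\,\rd\tau
$$
and then splitting the difference at $t$ and $s$ into one contribution containing $A'(v(t))-A'(v(s))$ (controlled by the $C^{2-}$ hypothesis on $A$ in $\mathcal{L}(E_{1+\alpha},E_\alpha)$ together with $C^\rho$-regularity of $u,\bar u$ in $E_\beta$) and one containing $(u-\bar u)(t)-(u-\bar u)(s)$ (controlled directly by the H\"older seminorm of $u-\bar u$). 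Theorem~\ref{T1} then bounds $\|\Phi_{A(u)}-\Phi_{A(\bar u)}\|_{\mathcal{L}(E_\alpha,E_{1+\alpha})}$ by $C\|u-\bar u\|_{C^\rho([0,T],E_\beta)}$. A Neumann-series argument (shrink $r$ so that $\Phi_{A(u)}$ is a small perturbation of $\Phi_{A(0)}$) provides a uniform bound on $\|\Phi_{A(u)}^{-1}\|_{\mathcal{L}(E_{1+\alpha},E_\alpha)}$, and finally the identity
$$
\Phi_{A(u)}^{-1}-\Phi_{A(\bar u)}^{-1}=\Phi_{A(u)}^{-1}\bigl(\Phi_{A(\bar u)}-\Phi_{A(u)}\bigr)\Phi_{A(\bar u)}^{-1}
$$
delivers \eqref{c3}.
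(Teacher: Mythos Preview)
Your proposal is correct and follows essentially the same route as the paper: both use the interpolation--extrapolation scale from \cite{LQPP} for \eqref{c0}, openness of $\mathcal{H}(E_{1+\theta},E_\theta)$ combined with the Lipschitz part of \eqref{a5} for \eqref{c1}, Proposition~\ref{P1} on the shifted scale $(E_\alpha,E_{1+\alpha})$ together with kernel inheritance from \eqref{a6} for invertibility at $u=0$, and Theorem~\ref{T1} on the shifted scale for \eqref{c2}--\eqref{c3}. In particular, your mean-value-theorem splitting of the H\"older seminorm is exactly the paper's estimate \eqref{c7} (with $z_1=u(t)$, $z_2=\bar u(t)$, $z_3=u(s)$, $z_4=\bar u(s)$), and your Neumann-series/perturbation step is what the paper phrases as ``openness of $\mathcal{L}is$'' and ``local Lipschitz continuity of the inversion map''.
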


\begin{proof}
The compact embeddings~\eqref{c0} are consequences of~\eqref{a2} and  \cite[V.Theorem 1.5.1]{LQPP}. Also note from~\eqref{a3} and \cite[V.Theorem 2.1.3]{LQPP} that 
\begin{equation}\label{c4}
A_\alpha(0)=A(0)\vert_{E_{1+\alpha}} \in  \mathcal{H}(E_{1+\alpha},E_\alpha)\,.
\end{equation} 
Since \cite[I.Theorem~1.3.1]{LQPP} entails that $\mathcal{H}(E_{1+\theta},E_\theta)$ is open in $\mathcal{L}(E_{1+\theta},E_\theta)$, we deduce  from~\eqref{a5} that that there exist $r\in (0,r_0)$, $\kappa\ge 1$, and $\omega>0$ such that \eqref{c1} is valid for $\theta=0,\alpha$ (and thus for $\theta=\alpha_0$ as well). 
In particular, it holds that~${\omega-A(v)\in\mathcal{L}is(E_{1+\theta},E_\theta)}$ for $v\in \bar{\mathbb{B}}_{E_\beta}(0,r)$. This implies that the $E_\theta$-realization~$A_\theta(v)$ of~${A(v)\in\mathcal{H}(E_1,E_0)}$ coincides with $A(v)\vert_{E_{1+\theta}}$. 
Moreover,  given  $\rho\in(0,1)$ and $u\in \bar{\mathbb{B}}_{C^\rho([0,T],E_\beta)}(0,r)$, we infer from that~\eqref{a5} and \eqref{c1} that
$$
[t\mapsto A(u(t))]\in C^\rho\big([0,T],\mathcal{H}(E_{1+\theta},E_\theta;\kappa,\omega)\big)\,.
$$
Hence, \cite[II.Corollary~4.4.2]{LQPP} ensures that
there is a unique evolution operator $$U_{A_\theta(u)}(t,s)\,,\quad 0\le s\le t\le T\,,$$ for $A_\theta(u)$ on $E_\theta$ with regularity subspace $E_{1+\theta}$. Clearly, $U_{A_\theta(u)}(t,s)=U_{A(u)}(t,s)\vert_{E_\theta}$.

If $x\in E_{\alpha}$, then
$$
\Phi_{A_\alpha(0)}x=\int_0^T\ww(t)e^{t A_\alpha(0)}\,\rd t\, x=\int_0^T\ww(t)e^{t A(0)}\,\rd t\, x=\Phi_{A(0)}x\,,
$$
since $e^{t A_\alpha(0)}=e^{t A(0)}\vert_{E_\alpha}$. 
Thus, $\mathrm{ker}(\Phi_{A_\alpha(0)})\subset\mathrm{ker}(\Phi_{A(0)})=\{0\}$ by \eqref{a6}. Consequently, we deduce from \eqref{c0}, \eqref{c4}, and Proposition~\ref{P1} that 
\begin{equation}\label{c5}
\Phi_{A(0)}\vert_{E_\alpha}=\Phi_{A_\alpha(0)}\in\mathcal{L}is(E_\alpha,E_{1+\alpha})\,.
\end{equation}
Note from \eqref{a5} that we may assume (making $r$ smaller, if necessary) that
\begin{equation} \label{c6}
\|A(z_1)-A(z_2)\|_{\mathcal{L}(E_{1+\theta},E_\theta)}\le c(r)\|z_1-z_2\|_{E_\beta}\,,\qquad z_1,z_2\in \mathbb{B}_{E_\beta}(0,r)\,,\quad \theta\in\{0, \alpha,\alpha_0\}\,,
\end{equation}
for some constant $c(r)>0$. 
Moreover, given $z_1,\ldots, z_4\in \mathbb{B}_{E_\beta}(0,r)$ (making again $r$ smaller, if necessary) it follows from the mean value theorem and again~\eqref{a5} that
\begin{align}
\|A&(z_1)-A(z_2)-A(z_3)+A(z_4)\|_{\mathcal{L}(E_{1+\alpha},E_\alpha)}\nonumber\\
%&=\left\|\int_0^1 DA(z_2+\tau(z_1-z_2))\,\rd \tau \, (z_1-z_2)-\int_0^1 DA(z_4+\tau(z_3-z_4))\,\rd \tau\, %(z_4-z_3)\right\|_{\mathcal{L}(E_{1+\alpha},E_\alpha)}\nonumber\\
&\le 
\int_0^1\left\| \p A(z_2+\tau(z_1-z_2))-   \p A(z_4+\tau(z_3-z_4))\right\|_{\mathcal{L}(E_\beta,\mathcal{L}(E_{1+\alpha},E_\alpha))}\,\rd \tau\, \|z_1-z_2\|_{E_\beta}\nonumber\\
&\qquad +\int_0^1  \left\| \p A(z_4+\tau(z_3-z_4)) \right\|_{\mathcal{L}(E_\beta,\mathcal{L}(E_{1+\alpha},E_\alpha))}\,\rd \tau\, \|z_1-z_2-z_3+z_4\|_{E_\beta} \nonumber\\
&\le c(r) \big(\|z_1-z_3\|_{E_\beta}+\|z_2-z_4\|_{E_\beta}\big)\,  \|z_1-z_2\|_{E_\beta}+ c(r) \|z_1-z_2-z_3+z_4\|_{E_\beta}\label{c7}
\end{align}
for some $c(r)>0$.
Therefore,  \eqref{c6}-\eqref{c7} imply that
\begin{align}\label{c8}
\|A(u)-A(\bar u)\|_{C^\rho([0,T],\mathcal{L}(E_{1+\alpha},E_\alpha))}\le c(r)\|u-\bar u\|_{C^\rho([0,T],E_\beta)}\,,\quad u, \bar u\in \bar{\mathbb{B}}_{C^\rho([0,T],E_\beta)}(0,r)\,.
\end{align}
Since $\mathcal{L}is(E_\alpha,E_{1+\alpha})$ is open in $\mathcal{L}(E_\alpha,E_{1+\alpha})$ and since
\[
\Phi: C^\rho\big([0,T],\mathcal{H}(E_{1+\alpha},E_\alpha)\big)\longrightarrow \mathcal{L}(E_\alpha,E_{1+\alpha})\,,\quad A\mapsto \Phi_A
\]
is locally Lipschitz continuous by Theorem~\ref{T1}, we now deduce~\eqref{c2} from~\eqref{c1} (with $\theta=\alpha$),~\eqref{c5}, and \eqref{c8}  by making again $r$ smaller, if necessary. 
Finally,~\eqref{c3} is a consequence of the  local Lipschitz continuity of the inversion map and again \eqref{c8} and Theorem~\ref{T1}.
\end{proof}

For the remainder, we fix $2\rho\in (0,\alpha-\beta)$ and in what follows we denote by $c(r)$, $c_i(r)$,  $i\in\N$, etc. positive constants that depend on $r$ and also $T$, $\rho$, $\kappa$, $\omega$ (from Proposition~\ref{consequences}) and the interpolation parameters.\\

Next, we establish stability results for the evolution operator in interpolation-extrapolation spaces. It is worth emphasizing that one must be careful when interpolating `across 1' and therefore the almost reiteration property is employed.

\begin{lem}\label{L14xx}
Assume~\eqref{EQ:A} and \eqref{A1Y}  and let $r\in (0,r_0)$ be as in Propositions~\ref{consequences}. Let 
\begin{align}\label{h11}
0\le\theta_1<\theta_0\le \alpha
\end{align}
and $\ve\in (0,\gamma-\alpha)$. Then, there exists a  constant $c_1(r)>0$   such that 
\begin{align}\label{h1}
\|U_{A(u)}(t,s)\|_{\mathcal{L}(E_\alpha,E_{1+\theta_1})}\le c_1(r) (t-s)^{\alpha-1-\theta_0}\,,\quad 0\le s<t\le T\,,
\end{align}
and
\begin{align}\label{h4xx}
\|U_{A(u)}(t,s)\|_{\mathcal{L}(E_{\gamma},E_{1+\alpha})}\le   c_1(r) (t-s)^{\gamma-\alpha-\ve-1} \,,\quad  0\le s<t\le T\,,
\end{align}
for $u\in \bar{\mathbb{B}}_{C^\rho([0,T],E_\beta)}(0,r)$.
\end{lem}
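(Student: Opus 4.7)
The plan is to combine the uniform parabolic structure supplied by Proposition~\ref{consequences} with the standard parabolic smoothing estimates for evolution operators on the interpolation--extrapolation scale (see~\cite[II.Section~5]{LQPP}), handling the delicate target space $E_{1+\alpha}$ in~\eqref{h4xx} by an interpolation argument that relies on the almost reiteration property of the scale. Fix $u\in\bar{\mathbb{B}}_{C^\rho([0,T],E_\beta)}(0,r)$; by Proposition~\ref{consequences} combined with~\eqref{c8}, the path $t\mapsto A(u(t))$ belongs to $C^\rho([0,T],\mathcal{H}(E_{1+\theta},E_\theta;\kappa,\omega))$ with H\"older seminorm bounded uniformly in $u$ at each base level $\theta\in\{0,\alpha,\alpha_0\}$. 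The standard smoothing estimate at each such level then yields, uniformly in $u$,
\[
\|U_{A(u)}(t,s)\|_{\mathcal{L}(E_{\theta+\xi},E_{\theta+\eta})}\le c(r)\,(t-s)^{\xi-\eta},\qquad 0\le\xi\le\eta<1,\ 0\le s<t\le T,
\]
where $U_{A(u)}(t,s)$ is identified with its $E_\theta$-realization $U_{A_\theta(u)}(t,s)$ by Proposition~\ref{consequences}.

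For~\eqref{h1}, I would specialize this at $\theta=\alpha$, $\xi=0$, and $\eta=1+\theta_1-\alpha$; the hypothesis $\theta_1<\alpha$ makes $\eta\in(0,1)$ admissible, producing $\|U_{A(u)}(t,s)\|_{\mathcal{L}(E_\alpha,E_{1+\theta_1})}\le c(r)(t-s)^{\alpha-1-\theta_1}$. Since $t-s\le T$ and $\theta_0>\theta_1$, the trivial weakening $(t-s)^{\alpha-1-\theta_1}\le T^{\theta_0-\theta_1}(t-s)^{\alpha-1-\theta_0}$ then yields~\eqref{h1} after absorbing $T^{\theta_0-\theta_1}$ into $c_1(r)$; the buffer $\theta_0>\theta_1$ keeps the constant uniformly controlled as $\theta_1\uparrow\alpha$.

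For~\eqref{h4xx}, the target $E_{1+\alpha}$ is the regularity subspace of the $E_\alpha$-realization, so the required smoothing occurs exactly at the endpoint $\eta=1$ on the $E_\alpha$-shifted scale. The sharp analytic-semigroup estimate built into the construction of $U_{A_\alpha(u)}$ delivers
\[
\|U_{A(u)}(t,s)\|_{\mathcal{L}(E_\alpha,E_{1+\alpha})}\le c(r)\,(t-s)^{-1},
\]
while stability on the regularity subspace gives $\|U_{A(u)}(t,s)\|_{\mathcal{L}(E_{1+\alpha})}\le c(r)$. Real interpolation of these two bounds in the input argument furnishes
\[
\|U_{A(u)}(t,s)\|_{\mathcal{L}((E_\alpha,E_{1+\alpha})_\vartheta,E_{1+\alpha})}\le c(r)\,(t-s)^{\vartheta-1},\qquad \vartheta\in(0,1).
\]
Choosing $\vartheta:=\gamma-\alpha-\varepsilon$, which lies in $(0,\gamma-\alpha)\subset(0,1)$ by hypothesis, and invoking the almost reiteration property of the interpolation--extrapolation scale (see~\cite[V.Section~1]{LQPP}) to obtain the continuous embedding $E_\gamma\hookrightarrow(E_\alpha,E_{1+\alpha})_\vartheta$ with a constant depending on $\varepsilon$ (the $\varepsilon$-slack being exactly what the almost reiteration requires), one arrives at~\eqref{h4xx}.

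The main obstacle is the endpoint character of the target $E_{1+\alpha}$ in~\eqref{h4xx} together with the fact that $\gamma$ does not coincide with any of the three base levels $\{0,\alpha,\alpha_0\}$ at which the evolution operators have been constructed. The buffer $\theta_0-\theta_1>0$ in~\eqref{h1} and the small loss $\varepsilon$ in~\eqref{h4xx} are precisely what is needed to stay strictly away from this endpoint and to identify $E_\gamma$ as an intermediate space via almost reiteration, while the uniformity of $c_1(r)$ in $u$ follows from the $u$-independence of $\kappa$, $\omega$, and the H\"older bound supplied by~\eqref{c8}.
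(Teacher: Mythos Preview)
Your overall strategy coincides with the paper's: work at the shifted base level $E_\alpha$, apply the smoothing estimates of~\cite[II.Lemma~5.1.3]{LQPP} for the evolution operator $U_{A_\alpha(u)}$, and pass to the scale spaces via almost reiteration~\cite[V.Theorem~1.5.3]{LQPP}. For~\eqref{h4xx} your interpolation between the endpoint bounds $\mathcal{L}(E_\alpha,E_{1+\alpha})$ and $\mathcal{L}(E_{1+\alpha})$ followed by $E_\gamma\hookrightarrow(E_\alpha,E_{1+\alpha})_{\gamma-\alpha-\varepsilon}$ is exactly what the paper does (the paper simply cites~\cite[II.Lemma~5.1.3]{LQPP} for the interpolated bound rather than spelling it out).

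There is, however, a genuine slip in your treatment of~\eqref{h1}. Your displayed ``standard smoothing estimate''
\[
\|U_{A(u)}(t,s)\|_{\mathcal{L}(E_{\theta+\xi},E_{\theta+\eta})}\le c(r)\,(t-s)^{\xi-\eta}
\]
tacitly identifies the scale space $E_{\theta+\eta}$ with the interpolation space $(E_\theta,E_{1+\theta})_\eta$. For $\theta=0$ this is true by definition of the scale, but for $\theta=\alpha$ it is in general false: only the \emph{almost} reiteration $(E_\alpha,E_{1+\alpha})_\eta\hookrightarrow E_{\alpha+\eta'}$ with $\eta'<\eta$ is available. Consequently the sharp bound $\|U_{A(u)}(t,s)\|_{\mathcal{L}(E_\alpha,E_{1+\theta_1})}\le c(r)(t-s)^{\alpha-1-\theta_1}$ that you claim before ``trivially weakening'' to the exponent $\alpha-1-\theta_0$ is unjustified. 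The buffer $\theta_0-\theta_1>0$ is not a cosmetic device for controlling constants; it is precisely the $\varepsilon$-room required by almost reiteration. The paper uses it that way: it first obtains
\[
\|U_{A(u)}(t,s)\|_{\mathcal{L}(E_\alpha,(E_\alpha,E_{1+\alpha})_{1+\theta_0-\alpha})}\le c(r)(t-s)^{\alpha-1-\theta_0}
\]
from~\cite[II.Lemma~5.1.3]{LQPP}, and then invokes $(E_\alpha,E_{1+\alpha})_{1+\theta_0-\alpha}\hookrightarrow E_{1+\theta_1}$, which holds because $1+\theta_1<1+\theta_0$. Your concluding paragraph shows you sense this, but the formal argument as written does not carry it out.
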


\begin{proof}
Recall from Proposition~\ref{consequences} (see also~\eqref{c6}) that 
\begin{subequations}\label{gen}
\begin{align}
A(u)\vert_{E_{1+\theta}}=A_\theta(u)\in C^\rho\big([0,T],\mathcal{H}(E_{1+\theta},E_\theta;\kappa,\omega)\big) 
\end{align}
with
\begin{align}
\|A(u)\|_{C^\rho([0,T],\mathcal{L}(E_{1+\theta},E_\theta))}\le c(r)
\end{align}
\end{subequations}
for $\theta\in\{0, \alpha,\alpha_0\}$ and each $u\in \bar{\mathbb{B}}_{C^\rho([0,T],E_\beta)}(0,r)$. We are thus in a position to apply the stability estimates from~\cite[II.Section~5]{LQPP} for the evolution operator $U_{A(u)}$ since \cite[II.Assumption~(5.0.1)]{LQPP} therein is satisfied.
 To this end, we note from \cite[V.Theorem~1.5.3]{LQPP} that
\begin{align}\label{inj1}
(E_\alpha,E_{1+\alpha})_{1+\theta_0-\alpha}\hookrightarrow E_{1+\theta_1}\,.
\end{align} 
Let now $u,\bar u\in \bar{\mathbb{B}}_{C^\rho([0,T],E_\beta)}(0,r)$. 
Noticing that by construction (see the proof of Proposition~\ref{P1})
\[
U_{A_\alpha(u)}(t,s)=U_{A(u)}(t,s)|_{E_\alpha},\quad 0\leq s\leq t\leq T,
\] 
it follows from \eqref{gen} (with $\theta=\alpha$) and \cite[II.Lemma~5.1.3]{LQPP} that
$$
\|U_{A(u)}(\sigma,s)\|_{\mathcal{L}(E_\alpha,(E_\alpha,E_{1+\alpha})_{1+\theta_0-\alpha})}\le c(r) (\sigma-s)^{\alpha-1-\theta_0}\,,\quad 0\le s<\sigma \le T\,,
$$
and hence, from~\eqref{inj1}, we derive~\eqref{h1}.

Similarly, choosing  $\ve\in (0,\gamma-\alpha)$ it follows from \eqref{gen} (with $\theta=\alpha$) that
$$
\|U_{A(u)}(t,\tau)\|_{\mathcal{L}((E_\alpha,E_{1+\alpha})_{\gamma-\alpha-\ve},E_{1+\alpha})}\le c(r) (t-\tau)^{\gamma-\alpha-\ve-1}\,,\quad 0\le \tau<t \le T\,,
$$
and, noticing that $E_\gamma\hookrightarrow (E_\alpha,E_{1+\alpha})_{\gamma-\alpha-\ve}$ by \cite[V.Theorem~1.5.3]{LQPP},
we derive~\eqref{h4xx}.
 This proves the lemma.
\end{proof}

We use the estimates from the previous lemma to establish the Lipschitz continuous dependence of the evolution operator $U_{A(u)}$ from Proposition~\ref{consequences} on~$u \in \bar{\mathbb{B}}_{C^\rho([0,T],E_\beta)}(0,r)$.

\begin{lem}\label{L14}
Assume~\eqref{EQ:A}  and \eqref{A1Y}, let $r\in (0,r_0)$ be as in Propositions~\ref{consequences}, and choose an arbitrary~$\ve\in(0,\min\{\gamma-\alpha_0,\alpha_0-\alpha\})$.
 Then, there exists a  constant $c_2(r)>0$ such that
\begin{align}\label{E1}
\|U_{A(u)}(t,s)-U_{A(\bar u)}(t,s)\|_{\mathcal{L}(E_{\gamma},E_{1+\alpha})}\le  c_2(r)(t-s)^{\gamma-\alpha-1-\ve}\|u-\bar u\|_{C([0,T],E_\beta)}\,,\quad 0\le s<t\le T\,,
\end{align}
and
\begin{align}\label{E2}
\|U_{A(u)}(\cdot,s)-U_{A(\bar u)}(\cdot,s)\|_{C^\rho([s,T],\mathcal{L}(E_{\alpha},E_\beta))}\le  c_2(r)\|u-\bar u\|_{C([0,T],E_\beta)}\,,\quad s\in [0,T)\,,
\end{align}
for $u, \bar u\in \bar{\mathbb{B}}_{C^\rho([0,T],E_\beta)}(0,r)$.
\end{lem}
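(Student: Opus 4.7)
The starting point for both estimates is the identity
\begin{equation*}
U_{A(u)}(t,s) - U_{A(\bar u)}(t,s) = \int_s^t U_{A(u)}(t,\tau)\,[A(u(\tau)) - A(\bar u(\tau))]\, U_{A(\bar u)}(\tau, s)\,\rd\tau,
\end{equation*}
combined with the Lipschitz bound from~\eqref{c6}, which gives $\|A(u(\tau))-A(\bar u(\tau))\|_{\mathcal{L}(E_{1+\theta},E_\theta)}\le c(r)\|u-\bar u\|_{C([0,T],E_\beta)}$ for each $\theta\in\{0,\alpha,\alpha_0\}$. Note that the verbatim proof of Lemma~\ref{L14xx} carries over to the $(E_{\alpha_0},E_{1+\alpha_0})$-scale by Proposition~\ref{consequences}; in particular one has $\|U_{A(u)}(t,s)\|_{\mathcal{L}(E_{\alpha_0},E_{1+\theta_1})}\le c(r)(t-s)^{\alpha_0-1-\theta_0}$ whenever $0\le \theta_1<\theta_0\le\alpha_0$.

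For~\eqref{E1} I factor through the pair $(E_{\alpha_0},E_{1+\alpha_0})$: given $\ve\in(0,\min\{\gamma-\alpha_0,\alpha_0-\alpha\})$ pick $\ve',\ve''>0$ with $\ve'+\ve''\le\ve$, $\ve'<\gamma-\alpha_0$, and $\ve''<\alpha_0-\alpha$. Then Lemma~\ref{L14xx} (extended as above) yields
\begin{equation*}
\|U_{A(\bar u)}(\tau,s)\|_{\mathcal{L}(E_\gamma,E_{1+\alpha_0})}\le c(r)(\tau-s)^{\gamma-\alpha_0-\ve'-1},\qquad \|U_{A(u)}(t,\tau)\|_{\mathcal{L}(E_{\alpha_0},E_{1+\alpha})}\le c(r)(t-\tau)^{\alpha_0-\alpha-\ve''-1}.
\end{equation*}
Inserting these and the $\mathcal{L}(E_{1+\alpha_0},E_{\alpha_0})$-Lipschitz bound into the identity, the Beta integral produces the desired singularity $(t-s)^{\gamma-\alpha-\ve-1}$.

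For~\eqref{E2} I first prove the pointwise bound $\|[U_{A(u)}-U_{A(\bar u)}](t,s)\|_{\mathcal{L}(E_\alpha,E_\beta)}\le c(r)\|u-\bar u\|_{C([0,T],E_\beta)}$. Inserting into the identity the smoothing estimate $\|U_{A(u)}(t,\tau)\|_{\mathcal{L}(E_0,E_\beta)}\le c(t-\tau)^{-\beta}$ (valid since $\beta<1$) together with $\|U_{A(\bar u)}(\tau,s)\|_{\mathcal{L}(E_\alpha,E_1)}\le c(\tau-s)^{\alpha-1-\delta}$ (Lemma~\ref{L14xx} with $\theta_1=0$, $\theta_0=\delta\in(0,\alpha)$) gives a Beta integral of order $(t-s)^{\alpha-\beta-\delta}$, uniformly bounded for $\delta\in(0,\alpha-\beta)$. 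The very same computation, applied over an arbitrary subinterval and with $\beta$ replaced by any $\beta'\in(\beta,\alpha)$, delivers the refined pointwise estimate
\begin{equation*}
\|[U_{A(u)}-U_{A(\bar u)}](t_1,s_1)\|_{\mathcal{L}(E_\alpha,E_{\beta'})}\le c(r)\|u-\bar u\|_{C([0,T],E_\beta)}(t_1-s_1)^{\alpha-\beta'-\delta}\,.
\end{equation*}

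The Hölder seminorm is then obtained from the cocycle splitting, valid for $s\le t_1<t_2\le T$,
\begin{align*}
[U_{A(u)}-U_{A(\bar u)}](t_2,s)-[U_{A(u)}-U_{A(\bar u)}](t_1,s)&=[U_{A(u)}-U_{A(\bar u)}](t_2,t_1)\,U_{A(u)}(t_1,s)\\
&\quad+[U_{A(\bar u)}(t_2,t_1)-I]\,[U_{A(u)}-U_{A(\bar u)}](t_1,s).
\end{align*}
Applying the pointwise bound on $[t_1,t_2]$ gains $(t_2-t_1)^{\alpha-\beta-\delta}$ in the first summand, which dominates $(t_2-t_1)^\rho$ for $\delta$ small since $\alpha-\beta>2\rho$; combined with the uniform boundedness of $U_{A(u)}(t_1,s)$ on $E_\alpha$, this controls the first summand by $c\|u-\bar u\|(t_2-t_1)^\rho$. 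For the second summand I fix $\beta':=\beta+\rho\in(\beta,\alpha)$ and combine the refined $\mathcal{L}(E_\alpha,E_{\beta'})$-estimate above with the standard Hölder bound $\|U_{A(\bar u)}(t_2,t_1)-I\|_{\mathcal{L}(E_{\beta'},E_\beta)}\le c(t_2-t_1)^{\beta'-\beta}=c(t_2-t_1)^\rho$, which follows by applying the evolution identity in the $(E_\beta,E_{1+\beta})$-extrapolation scale (again furnished by Proposition~\ref{consequences}). The main technical obstacle is the bookkeeping: every invocation of Lemma~\ref{L14xx} demands a strict inequality $\theta_1<\theta_0$, so the small losses $\delta,\ve',\ve''$ must be chosen to respect the ordering $\beta<\alpha<\alpha_0<\gamma$ while leaving enough of the margin $\alpha-\beta-2\rho>0$ to absorb them into the target exponents.
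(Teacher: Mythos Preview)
Your argument for~\eqref{E1} is correct and coincides with the paper's in substance: the paper simply cites \cite[II.Theorem~5.1.4]{LQPP} after locating $E_\gamma$ and $E_{1+\alpha}$ inside the $(E_{\alpha_0},E_{1+\alpha_0})$-interpolation scale via almost reiteration, whereas you reproduce that theorem's mechanism explicitly through the variation-of-constants identity and two applications of Lemma~\ref{L14xx}.

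For~\eqref{E2} you take a genuinely different route. The paper keeps the integral identity throughout: it writes the second-order difference $[U_{A(u)}-U_{A(\bar u)}](t,s)-[U_{A(u)}-U_{A(\bar u)}](\tau,s)$ as a difference of integrals, splits the domain into $[s,\tau]$ and $[\tau,t]$, and controls the first piece using a H\"older bound on $U_{A(u)}(t,\sigma)-U_{A(u)}(\tau,\sigma)$ in $\mathcal{L}(E_{\theta_2},E_\beta)$. This requires an interpolation step (the paper's estimate~(h2)) to place $A(u)-A(\bar u)$ in $\mathcal{L}(E_{1+\theta_1},E_{\theta_2})$ for intermediate $\beta<\theta_2<\theta_1<\alpha$, obtained by interpolating the Lipschitz bounds at $\theta=0$ and $\theta=\alpha$ and invoking almost reiteration. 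Your cocycle decomposition is cleaner in that it avoids this interpolation entirely: you only ever use the $\mathcal{L}(E_1,E_0)$-Lipschitz bound on $A(u)-A(\bar u)$, and the H\"older gain comes from the factor $(t_2-t_1)^{\alpha-\beta-\delta}$ in your refined pointwise estimate together with $\|U_{A(\bar u)}(t_2,t_1)-I\|_{\mathcal{L}(E_{\beta'},E_\beta)}\le c(t_2-t_1)^\rho$. The trade-off is that you need the pointwise estimate with the explicit power of $(t_1-s)$, not merely boundedness, and you must track that $\alpha-\beta'-\delta>0$ to keep it uniform.

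One small correction: the bound $\|U_{A(\bar u)}(t_2,t_1)-I\|_{\mathcal{L}(E_{\beta'},E_\beta)}\le c(t_2-t_1)^{\beta'-\beta}$ does not require passing to an $(E_\beta,E_{1+\beta})$-extrapolation scale---Proposition~\ref{consequences} does not furnish this scale for general $\beta$. The estimate is available directly in the $(E_0,E_1)$-scale since $0\le\beta<\beta'<1$; it is exactly \cite[II.Equation~(5.3.8)]{LQPP} with $\sigma=\tau=t_1$, which is what the paper uses as well.
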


\begin{proof}
In order to prove~\eqref{E1} we note from
\cite[V.Theorem~1.5.3]{LQPP} that
\begin{align*}
E_\gamma\hookrightarrow (E_{\alpha_0},E_{1+\alpha_0})_{\gamma-\alpha_0-\ve/2}\,,\qquad 
(E_{\alpha_0},E_{1+\alpha_0})_{1+\alpha-\alpha_0+\ve/2}\hookrightarrow E_{1+\alpha}\,.
\end{align*} 
Now, \eqref{E1} follows from~\eqref{gen} (with $\theta=\alpha_0$) and \cite[II.Theorem~5.1.4]{LQPP}.

As for~\eqref{E2} we consider $u, \bar u\in \bar{\mathbb{B}}_{C^\rho([0,T],E_\beta)}(0,r)$ and choose $\theta_i$  such that $$
\beta+\rho<\theta_2<\theta_1<\theta_0<\alpha-\rho\,.
$$ 
First note from~\eqref{gen} (with $\theta=0$) and \cite[II.Equation~(5.3.8)]{LQPP} that
\begin{align}\label{h3}
\|U_{A(u)}(t,\sigma)-U_{A( u)}(\tau,\sigma)\|_{\mathcal{L}(E_{\theta_2},E_\beta)}\le  c(r) (t-\tau)^{\theta_2-\beta}\,,\quad 0\le \sigma\le \tau\le t\le T\,,
\end{align}
while \cite[II.Lemma~5.1.3]{LQPP} entails that
\begin{align}\label{h4}
\|U_{A(u)}(t,\sigma)\|_{\mathcal{L}(E_{\theta_2},E_\beta)}\le   c(r) \,,\quad  0\le\sigma\le t\le T\,,
\end{align}
with a constant $c(r)>0$.
Let $\bar\theta\in (\theta_2,\theta_1)$. Then, \cite[V.Theorem~1.5.3]{LQPP} ensures the  continuity of the embeddings
\begin{align}\label{uu}
 E_{1+\theta_1}\hookrightarrow (E_1,E_{1+\alpha})_{\bar{\theta}/\alpha}\,,\qquad (E_0,E_\alpha)_{\bar{\theta}/\alpha}\hookrightarrow E_{\theta_2}\,.
\end{align} 
Moreover, since~\eqref{c6} and interpolation imply for $\theta\in (0,1)$ that
\begin{align*}
\|A(u(\sigma))&-A(\bar u(\sigma))\|_{\mathcal{L}((E_1,E_{1+\alpha})_\theta,(E_0,E_\alpha)_\theta)}\\
&\le c \|A(u(\sigma))-A(\bar u(\sigma))\|_{\mathcal{L}(E_1,E_0)}^{1-\theta} \|A(u(\sigma))-A(\bar u(\sigma))\|_{\mathcal{L}(E_{1+\alpha},E_\alpha)}^\theta\\
&\le c(r)\|u(\sigma)-\bar u(\sigma)\|_{E_\beta}\,,
\end{align*}
we infer (taking $\theta=\bar{\theta}/\alpha$ in the previous consideration) from~\eqref{uu} that
\begin{align}\label{h2}
\|A(u(\sigma))-A(\bar u(\sigma))\|_{\mathcal{L}(E_{1+\theta_1},E_{\theta_2})}\le    c(r)\|u-\bar u\|_{C([0,T],E_\beta)}\,,\quad 0\le \sigma\le T\,.
\end{align}
Observe for $0\le s< t\le T$  and $x\in E_1$ that  
$$
U_{A(u)}(t,\cdot)U_{A(\bar u)}(\cdot,s)x\in C([s,t], E_0)\cap C^1((s,t), E_0)
$$ 
with
$$
\frac{\rd}{\rd \sigma} \big(U_{A(u)}(t,\sigma)U_{A(\bar u)}(\sigma,s)x\big)=- U_{A(u)}(t,\sigma)\big[A(u(\sigma))-A(\bar u(\sigma))\big] U_{A(\bar u)}(\sigma,s)x\,, \quad s<\sigma<t\,,
$$
hence
$$
 U_{A(u)}(t,s)x-U_{A(\bar u)}(t,s)x= \int_s^t U_{A(u)}(t,\sigma)\big[A(u(\sigma))-A(\bar u(\sigma))\big] U_{A(\bar u)}(\sigma,s)x\,\rd \sigma \,.
$$
Fix now $0\le s\le \tau\le t\le T$. Then, using the previous identity for $t$ and for $t=\tau$, it follows  from the estimates~\eqref{h1}, \eqref{h3}, \eqref{h4}, and~\eqref{h2} that
\begin{align*}
\|& U_{A(u)}(t,s)-U_{A(\bar u)}(t,s)- U_{A(u)}(\tau,s)+U_{A(\bar u)}(\tau,s)\|_{\mathcal{L}(E_\alpha,E_\beta)}\\
&\le \int_s^\tau \big\|\big[U_{A(u)}(t,\sigma)-U_{ A ( u) }(\tau,\sigma)\big] \big[A(u(\sigma))-A(\bar u(\sigma))\big] U_{A(\bar u)}(\sigma,s)\big\|_{\mathcal{L}(E_\alpha,E_\beta)}\,\rd \sigma\\
&\quad+\int_\tau^t \big\|U_{A(u)}(t,\sigma) \big[A(u(\sigma))-A(\bar u(\sigma))\big] U_{A(\bar u)}(\sigma,s)\big\|_{\mathcal{L}(E_\alpha,E_\beta)}\,\rd \sigma\\
&\le \int_s^\tau \big\|U_{A(u)}(t,\sigma)-U_{A ( u) }(\tau,\sigma)\big\|_{\mathcal{L}(E_{\theta_2},E_\beta)}\\ 
&\hspace{1.15cm} \times\big\|A(u(\sigma))-A(\bar u(\sigma))\big\|_{\mathcal{L}(E_{1+\theta_1},E_{\theta_2})} 
\big\|U_{A(\bar u)}(\sigma,s)\big\|_{\mathcal{L}(E_\alpha,E_{1+\theta_1})}\,\rd \sigma\\
&\quad+\int_\tau^t \big\|U_{A(u)}(t,\sigma)\big\|_{\mathcal{L}(E_{\theta_2},E_\beta)} \big\|A(u(\sigma))-A(\bar u(\sigma))\big\|_{\mathcal{L}(E_{1+\theta_1},E_{\theta_2})}
 \big\|U_{A(\bar u)}(\sigma,s)\big\|_{\mathcal{L}(E_\alpha,E_{1+\theta_1})}\,\rd \sigma\\
&\le  c(r) (t-\tau)^{\theta_2-\beta} \|u-\bar u\|_{C([0,T],E_\beta)}  T^{\alpha-\theta_0}
+c(r) \|u-\bar u\|_{C([0,T],E_\beta)} \big((t-s)^{\alpha-\theta_0}- (\tau-s)^{\alpha-\theta_0}\big)\,.
\end{align*}
Since $\rho\le \min\{\theta_2-\beta,\alpha-\theta_0\}$ we have
$$
(t-\tau)^{\theta_2-\beta}\le T^{\theta_2-\beta-\rho}(t-\tau)^{\rho}\,,\qquad (t-s)^{\alpha-\theta_0}- (\tau-s)^{\alpha-\theta_0}\le (t-\tau)^{\alpha-\theta_0}\le T^{\alpha-\theta_0-\rho}(t-\tau)^{\rho}\,,
$$
and therefore
\begin{align}\label{h7}
\|& U_{A(u)}(t,s)-U_{A(\bar u)}(t,s)- U_{A(u)}(\tau,s)+U_{A(\bar u)}(\tau,s)\|_{\mathcal{L}(E_\alpha,E_\beta)}\le c(r) (t-\tau)^{\rho} \|u-\bar u\|_{C([0,T],E_\beta)}\,.
\end{align}
Finally,  \eqref{gen} (with $\theta=0$), \cite[II.Lemma~5.1.4]{LQPP}, and \eqref{c6} ensure for $0\leq s\le t\leq T$ that
\begin{align}
\|U_{A(u)}(t,s)-U_{A(\bar u)}(t,s)\|_{\mathcal{L}(E_{\alpha},E_\beta)}&\le c(r) (t-s)^{\alpha-\beta}\|A(u)-A(\bar u)\|_{C([0,T],\mathcal{L}(E_1,E_0))}\nonumber\\
&\le  c(r)\|u-\bar u\|_{C([0,T],E_\beta)}\,.\label{h8}
\end{align}
The desired estimate~\eqref{E2} is now a consequence of \eqref{h7} and \eqref{h8}.
\end{proof}

We need one last preparation for which we introduce time-weighted spaces. 
Given $\mu\in \R$   and a Banach space $E$, we denote by $C_{\mu}((0,T],E)$ the Banach space of all functions
$u\in C((0,T],E)$ such that 
$t^{\mu} u(t)\rightarrow 0$ in $E$ as $t\rightarrow 0^+$, equipped with the norm
\begin{equation*}
u\mapsto \|u\|_{C_{\mu}((0,T],E)} := \sup\left\{ t^{\mu}\, \|u(t)\|_E \,:\, t\in (0,T]\right\}\,.
\end{equation*}
These spaces are tailored to capture the singular behavior of the evolution operator $U_{A(u)}$ at $t=0$ within the scale of interpolation spaces.

\begin{lem}\label{psi}
Assume~\eqref{EQ:A} and \eqref{A1Y} and let $r\in (0,r_0)$ be as in Propositions~\ref{consequences}. Set
$$
\Psi(u) g:=\int_0^T\ww(t)\int_0^t U_{A(u)}(t,s) g(s)\,\rd s\,\rd t\,,\qquad g\in C_\nu((0,T],E_\gamma)\,,\quad  u\in \bar{\mathbb{B}}_{C^\rho([0,T],E_\beta)}(0,r)\,.
$$
If $\nu\in [0,1)$, then for all $u,\bar u\in \bar{\mathbb{B}}_{C^\rho([0,T],E_\beta)}(0,r)$ it holds that
$$
\Psi(u)\in\mathcal{L}( C_\nu((0,T],E_\gamma),E_{1+\alpha})
$$ 
and there exists a  constant ${c_3(r)>0}$  such that 
\begin{align*}
\|\Psi(u)\|_{\mathcal{L}( C_\nu((0,T],E_\gamma),E_{1+\alpha})}\le c_3(r)
\end{align*}
and
\begin{align*}
\|\Psi(u)-\Psi(\bar u)\|_{\mathcal{L}( C_\nu((0,T],E_\gamma),E_{1+\alpha})}\le c_3(r)\|u-\bar u\|_{C([0,T],E_\beta)}\,.
\end{align*}
\end{lem}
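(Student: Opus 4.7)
The plan is to bound $\Psi(u)g$ and the difference $\Psi(u)g-\Psi(\bar u)g$ in $E_{1+\alpha}$ by a direct estimation of the iterated integral, applying the stability results for the evolution operator that have just been established in Lemma~\ref{L14xx} and Lemma~\ref{L14}.

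First I fix $\ve\in(0,\min\{\gamma-\alpha_0,\alpha_0-\alpha\})$, so that estimates \eqref{h4xx} and \eqref{E1} are simultaneously available for this value of $\ve$; note in particular that $\gamma-\alpha-\ve>0$. For $g\in C_\nu((0,T],E_\gamma)$ one has $\|g(s)\|_{E_\gamma}\le s^{-\nu}\|g\|_{C_\nu((0,T],E_\gamma)}$ for $s\in(0,T]$, so applying \eqref{h4xx} pointwise to the integrand in the definition of $\Psi(u)g$ gives
$$
\|\Psi(u)g\|_{E_{1+\alpha}}\le c_1(r)\|\ww\|_{C([0,T])}\|g\|_{C_\nu((0,T],E_\gamma)}\int_0^T\int_0^t(t-s)^{\gamma-\alpha-\ve-1}s^{-\nu}\,\rd s\,\rd t.
$$
The substitution $s=t\sigma$ in the inner integral produces $t^{\gamma-\alpha-\ve-\nu}\mathsf{B}(\gamma-\alpha-\ve,1-\nu)$, and the Beta function is finite because $\gamma-\alpha-\ve>0$ and $\nu<1$. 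The remaining $t$-integral converges on $(0,T]$ since the exponent $\gamma-\alpha-\ve-\nu$ exceeds $-1$ (again using $\nu<1$). This yields the boundedness of $\Psi(u)\in\mathcal{L}(C_\nu((0,T],E_\gamma),E_{1+\alpha})$ uniformly for $u\in\bar{\mathbb{B}}_{C^\rho([0,T],E_\beta)}(0,r)$.

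For the Lipschitz estimate, the identical computation with \eqref{E1} in place of \eqref{h4xx} produces exactly the same double integral multiplied by the extra factor $\|u-\bar u\|_{C([0,T],E_\beta)}$, which gives the second inequality with a constant of the same form. Measurability of the integrands is immediate from the joint continuity of $(t,s)\mapsto U_{A(u)}(t,s)$ on $\{0\le s<t\le T\}$ together with the continuity of $g$ on $(0,T]$, so the Bochner integrals are well-defined.

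I do not anticipate a genuine obstacle: the lemma is essentially a corollary of the preparatory stability estimates, and the work has already been carried out in Lemmas~\ref{L14xx} and~\ref{L14}. The only mild subtlety is to choose $\ve$ small enough to satisfy the hypotheses of both of those lemmas while keeping the singular kernel $(t-s)^{\gamma-\alpha-\ve-1}$ integrable against $s^{-\nu}$ over the triangle $\{0<s<t<T\}$; the assumption $\xi<\alpha+(\ell+1)^{-1}$ in \eqref{A1Y} is not needed here, only the separation $\gamma>\alpha_0>\alpha$ together with $\nu<1$.
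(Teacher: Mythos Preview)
Your proof is correct and follows essentially the same approach as the paper: fix $\ve\in(0,\min\{\gamma-\alpha_0,\alpha_0-\alpha\})$, apply \eqref{h4xx} for the uniform bound and \eqref{E1} for the Lipschitz estimate, and evaluate the resulting double integral via the Beta function $\mathsf{B}(\gamma-\alpha-\ve,1-\nu)$. Your additional remarks on Bochner integrability and on which parts of \eqref{A1Y} are actually used are accurate but not essential to the argument.
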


\begin{proof}
Fix $\ve\in(0,\min\{\gamma-\alpha_0,\alpha_0-\alpha\})$ and let $g\in C_\nu((0,T],E_\gamma)$ and $u, \bar u\in \bar{\mathbb{B}}_{C^\rho([0,T],E_\beta)}(0,r)$. Then, it follows from~\eqref{h4xx} that
\begin{align*}
\|\Psi(u) g\|_{E_{1+\alpha}}& \le  c\int_0^T\int_0^t \|U_{A(u)}(t,s)\|_{\mathcal{L}(E_\gamma,E_{1+\alpha})} \| g(s)\|_{E_\gamma}\,\rd s\,\rd t\\
&\le c(r) \int_0^T\int_0^t (t-s)^{\gamma-\alpha-\ve-1} s^{-\nu}\,\rd s\,\rd t\, \|g\|_{C_\nu((0,T],E_\gamma)}\\
&\le c(r)\mathsf{B}(\gamma-\alpha-\ve,1-\nu)\|g\|_{C_\nu((0,T],E_\gamma)}
\end{align*}
and similarly, using~\eqref{E1},
\begin{align*}
\|(\Psi(u)-\Psi(\bar u)) g\|_{E_{1+\alpha}}& \le  c \int_0^T\int_0^t \|U_{A(u)}(t,s)-U_{A(\bar u)}(t,s)\|_{\mathcal{L}(E_\gamma,E_{1+\alpha})} \| g(s)\|_{E_\gamma}\,\rd s\,\rd t\\
&\le  c(r)\int_0^T\int_0^t (t-s)^{\gamma-\alpha-\ve-1} s^{-\nu}\,\rd s\,\rd t\,\|u-\bar u\|_{C([0,T],E_\beta)} \|g\|_{C_\nu((0,T],E_\gamma)}\\
&\le c(r)\,\mathsf{B}(\gamma-\alpha-\ve,1-\nu)\|u-\bar u\|_{C([0,T],E_\beta)}\|g\|_{C_\nu((0,T],E_\gamma)}
\end{align*}
as claimed.
\end{proof}

At this stage we have all prerequisites to establish Theorem~\ref{MT1}.

%%%%%%%%%%%%%%%%%%%%%%%%%%%%%%%%%%%%%%%%%%
%%%%%%%%%%%%%%%%%%%%%%%%%%%%%%%%%%%%%%%%%%
\subsection{Proof of Theorem~\ref{MT1}}
%%%%%%%%%%%%%%%%%%%%%%%%%%%%%%%%%%%%%%%%%%
%%%%%%%%%%%%%%%%%%%%%%%%%%%%%%%%%%%%%%%%%%

Assume~\eqref{EQ:A} and \eqref{AA} and recall (see~\eqref{vdk}) that we want to solve the fixed point problem
$$
u(t)=U_{A(u)}(t,0)\Phi_{A(u)}^{-1}\big(M-\Psi(u) f(u)\big)+\int_0^t U_{A(u)}(t,s) f(u(s))\,\rd s\,,\quad t\in [0,T]\,,
$$
with $M\in E_{1+\alpha}$ given and
$$
\Phi_{A(u)}=\int_0^T\ww(t)\,U_{A(u)}(t,0)\,\rd t\,,\qquad \Psi(u)g = \int_0^T\ww(t)\int_0^t U_{A(u)}(t,s) g(s)\,\rd s\,\rd t\,.
$$
To this end, we shall invoke Banach's fixed point theorem. We fix $\mu$ and $\rho$ such that
\begin{equation}\label{n0}
 (\xi-\alpha)_+<\mu<\frac{1}{\ell+1}\,,\qquad  0<2\rho <\min\left\{2(1-\mu(\ell+1)),\alpha-\beta\right\}\,,
\end{equation} 
and introduce, for a given $L\in (0,\min\{r,1\})$, with $r\in (0,r_0)$  as in Propositions~\ref{consequences}, the complete metric space
$$
X_L:=\left\{u\in C_{\mu}((0,T],E_\xi)\cap C^\rho([0,T],E_\beta)\,:\, \|u\|_{C_{\mu}((0,T],E_\xi)}+ \|u\|_{C^\rho([0,T],E_\beta)}\le L\right\}
$$
equipped with the distance function
$$
d_{X_L}(u,\bar u):=\|u-\bar u\|_{C_{\mu}((0,T],E_\xi)}+ \|u-\bar u\|_{C^\rho([0,T],E_\beta)}\,,\quad u, \bar u\in X_L\,.
$$
It then follows from~\eqref{F2} and \eqref{F2x} that there is a constant $c(r_0)>0$ such that
\begin{equation}\label{n1}
\|f(u)-f(\bar u)\|_{C_{\mu(\ell+1)}((0,T],E_\gamma)}\le c(r_0) L^\ell d_{X_L}(u,\bar u)\,,\quad u,\bar u\in X_L\,,
\end{equation}
and
\begin{equation}\label{n2}
\|f(u)\|_{C_{\mu(\ell+1)}((0,T],E_\gamma)}\le c(r_0) L^{\ell+1} \,,\quad u\in X_L\,.
\end{equation}
In particular, using Lemma~\ref{psi} we have $\Psi(u)f(u)\in E_{1+\alpha}$ for $u\in X_L$ and hence,
in view of Proposition~\ref{consequences} we may define, for given $M\in E_{1+\alpha}$, 
$$
\Xi(u):=\Phi_{A(u)}^{-1}\big(M-\Psi(u)f(u)\big) \in E_\alpha\,,\quad u\in X_L\,.
$$
Note from~\eqref{c3},  \eqref{n2}, and Lemma~\ref{psi} that
\begin{equation}\label{Xi1}
\|\Xi(u)\|_{E_\alpha}\le \|\Phi_{A(u)}^{-1}\|_{\mathcal{L}(E_{1+\alpha},E_\alpha)} \big(\|M\|_{E_{1+\alpha}}+\|\Psi(u)f(u)\|_{E_{1+\alpha}}\big)\le c(r) \big(\|M\|_{E_{1+\alpha}}+L^{\ell+1}\big)
\end{equation}
for $u\in X_L$. Moreover, since 
\begin{align*}
\|\Xi(u) -  \Xi(\bar u)\|_{E_\alpha}
&\le \|\Phi_{A(u)}^{-1}-\Phi_{A(\bar u)}^{-1}\|_{\mathcal{L}(E_{1+\alpha},E_\alpha)} \big(\|M\|_{E_{1+\alpha}} +\|\Psi(u)f(u)\|_{E_{1+\alpha}}\big)\nonumber\\
&\quad +\|\Phi_{A(\bar u)}^{-1}\|_{\mathcal{L}(E_{1+\alpha},E_\alpha)} \big(\|\Psi(u)(f(u)-f(\bar u))\|_{E_{1+\alpha}}+\|(\Psi(u)-\Psi(\bar u))f(\bar u)\|_{E_{1+\alpha}}\big)
\end{align*}
we may invoke~\eqref{c3}, \eqref{n1}, \eqref{n2}, Proposition~\ref{consequences}, and  Lemma~\ref{psi}, to obtain
\begin{align}\label{Xi2}
\|\Xi(u) -  \Xi(\bar u)\|_{E_\alpha}
&\le  c(r)\big(\|M\|_{E_{1+\alpha}}+L^{\ell+1}+L^{\ell}\big) d_{X_L}(u,\bar u) 
\end{align}
for $ u, \bar u\in X_L$. 
Defining
$$
F(u)(t):=U_{A(u)}(t,0)\Xi(u)+\int_0^t U_{A(u)}(t,s)f(u(s))\,\rd s\,,\qquad t\in [0,T]\,,\quad u\in X_L\,,
$$
we claim that $F$ defines a contraction on $X_L$ provided that $L\in (0, \min\{r, 1\})$ and $\|M\|_{E_{1+\alpha}}$ are sufficiently small.

We first check that $F$ is a self-mapping on $X_L$. Let $ u\in X_L$ be fixed.
Then, \eqref{n2}, \eqref{Xi1}, and \cite[II.Lemma~5.1.3]{LQPP} imply that (recall that $\alpha>\beta$)
\begin{align}
\|F(u)(t)\|_{E_\beta}&\le \|U_{A(u)}(t,0)\|_{\mathcal{L}(E_\alpha,E_\beta)}\|\Xi(u)\|_{E_\alpha}   +\int_0^t \|U_{A(u)}(t,s)\|_{\mathcal{L}(E_\gamma,E_\beta)} \|f(u(s))\|_{E_\gamma}\,\rd s \nonumber\\ 
& \le c(r) \big(\|M\|_{E_{1+\alpha}}+L^{\ell+1}\big)
\label{g1}
\end{align}
and 
\begin{align}
\|F(u)(t)\|_{E_1}&\le  c(r) (t^{\alpha-1}+t^{\gamma-\mu(\ell+1)})\big(\|M\|_{E_{1+\alpha}}+L^{\ell+1}\big) 
\label{g1'}
\end{align}
for $t\in[0,T]$.  Similarly, choosing $\alpha_0\in(0,\alpha)$ with $(\xi-\alpha_0)_+<\mu$ (see~\eqref{n0}), $\zeta\in( (\xi-\alpha_0)_+,\mu)$, and $\gamma_0\in (\alpha,\gamma)$, we obtain
\begin{align*}
\|F(u)(t)\|_{E_\xi}&\le c(r)  t^{-(\xi-\alpha_0)_+} \|\Xi(u)\|_{E_\alpha}+c(r)\big(1+t^{1+\gamma_0-\xi-\mu(\ell+1)}\big)L^{\ell+1}\\
&\le c(r) t^{-\zeta}\big(\|M\|_{E_{1+\alpha}}+L^{\ell+1}\big)+c(r)\big(1+t^{1+\gamma_0-\xi-\mu(\ell+1)}\big)L^{\ell+1}
\end{align*}
for $t\in(0,T]$. Since
\begin{equation}\label{mm}
1+\gamma_0-\xi-\mu(\ell+1)>-\mu\,,
\end{equation} 
the latter implies that
$$
 \lim_{t\to 0^+}t^\mu\|F(u)(t)\|_{E_\xi}=0
$$
and
\begin{align}
\sup_{t\in (0,T]}t^\mu\|F(u)(t)\|_{E_\xi}\le c(r)\big(\|M\|_{E_{1+\alpha}}+L^{\ell+1}\big)\,.\label{g1x}
\end{align}
Since \cite[II.Theorem~5.3.1]{LQPP} entails that
$$
\|U_{A(u)}(t,0)-U_{A(u)}(s,0)\|_{\mathcal{L}(E_\alpha,E_\beta)}\le c(r)(t-s)^{\alpha-\beta},
$$
while \cite[II.Equation~(5.3.8)]{LQPP} and $U_{A(u)}(s,s)=1$ yield
$$
\|U_{A(u)}(t,s)-1\|_{\mathcal{L}(E_\alpha,E_\beta)}\le c(r)(t-s)^{\alpha-\beta}\,,
$$
it follows from \eqref{n2} and \eqref{Xi1}  that 
\begin{align}
\|F(u)(t)-F(u)(s)\|_{E_\beta} &\le \|U_{A(u)}(t,0)-U_{A(u)}(s,0)\|_{\mathcal{L}(E_\alpha,E_\beta)} \|\Xi(u)\|_{E_{\alpha}} \nonumber\\
&\qquad +\int_0^s \|U_{A(u)}(t,s)-1\|_{\mathcal{L}(E_\alpha,E_\beta)} \|U_{A(u)}(s,\tau)\|_{\mathcal{L}(E_\gamma,E_\alpha)} \|f(u(\tau))\|_{E_\gamma}\,\rd \tau\nonumber\\
&\qquad + \int_s^t \|U_{A(u)}(t,\tau)\|_{\mathcal{L}(E_\gamma,E_\beta)}  \|f(u(\tau))\|_{E_\gamma}\,\rd \tau \nonumber\\
&\le c(r) (t-s)^{\alpha-\beta}\big(\|M\|_{E_{1+\alpha}}+L^{\ell+1}\big) +c(r)L^{\ell+1} (t-s)^{\alpha-\beta}\int_0^s\tau^{-\mu(\ell+1)}\,\rd \tau\nonumber\\
&\qquad  +c(r) L^{\ell+1} \int_s^t \tau^{-\mu(\ell+1)}\,\rd \tau\nonumber\\
&\le c(r) (t-s)^{\alpha-\beta}\big(\|M\|_{E_{1+\alpha}}+L^{\ell+1}\big)+c(r) (t-s)^{1-\mu(\ell+1)} L^{\ell+1} \nonumber\\
&\le c(r) (t-s)^\rho\big(\|M\|_{E_{1+\alpha}}+L^{\ell+1}\big)\label{g2}
\end{align}
for $0\le s<t\le T$ recalling the choice of $\rho$ in~\eqref{n0}.
Since \eqref{g1'}, \eqref{g2}, and interpolation ensure that  $u\in C((0,T],E_\xi)$, we conclude from \eqref{g1}-\eqref{g2}  that there is $m_0>0$ such that
$F:X_L\to X_L$
for $\|M\|_{E_{1+\alpha}}\le m_0$ provided that $L\in (0,\min\{r,1\})$ is sufficiently small. 

We next check the contraction property and set
$$
F_1(u):= U_{A(u)}(\cdot,0)\Xi(u)\,,\qquad F_2(u):=F(u)-F_1(u)
$$
for $u\in X_L$.
Consider $u, \bar u\in X_L$. Then, using \eqref{E2}, \eqref{h3}, and \eqref{Xi1}-\eqref{Xi2} we deduce that
\begin{align}\label{i8}
\|F_1(u)& -  F_1(\bar u)\|_{C^\rho([0,T],E_\beta)}\nonumber\\
&\le \|U_{A(u)}(\cdot,0)-U_{A(\bar u)}(\cdot,0)\|_{C^\rho([0,T],\mathcal{L}(E_\alpha,E_\beta))} \|\Xi(u)\|_{E_\alpha}\nonumber\\
&\qquad +\|U_{A(\bar u)}(\cdot,0)\|_{C^\rho([0,T],\mathcal{L}(E_\alpha,E_\beta))} \|\Xi(u)-\Xi(\bar u)\|_{E_\alpha}\nonumber\\
&\le c  \big(\|M\|_{E_{1+\alpha}}+L^{\ell+1}+L^{\ell}\big) d_{X_L}(u,\bar u)\,.
\end{align}
In much the same way, since 
$$
\|U_{A(\bar u)}(\cdot,0)\|_{C_\mu((0,T],\mathcal{L}(E_\alpha,E_\xi))}\le c(r)
$$
and
$$
\|U_{A(u)}(\cdot,0)-U_{A(\bar u)}(\cdot,0)\|_{C_\mu((0,T],\mathcal{L}(E_\alpha,E_\xi))}\le c(r)\|u-\bar
u\|_{C([0,T],E_\beta)}\,,
$$ 
 by \cite[II.Lemma~5.1.4]{LQPP} and \eqref{c6}\,,
we derive that
\begin{align}\label{i88}
\|F_1(u) -  F_1(\bar u)\|_{C_\mu((0,T],E_\xi)}
\le c  \big(\|M\|_{E_{1+\alpha}}+L^{\ell+1}+L^{\ell}\big) d_{X_L}(u,\bar u)\,.
\end{align}

As for $F_2$ we use the assumption $\gamma>\alpha$,  \cite[II.Equation~(5.3.8)]{LQPP}, \eqref{E2}, and \eqref{h8}-\eqref{n2}   to obtain for $0\le \tau\le t\le T$ that
\begin{align*}
\|F_2(u)(t)& -  F_2(\bar u)(t)-F_2(u)(\tau) +  F_2(\bar u)(\tau)\|_{E_\beta}\\
& \le c   \int_0^\tau \|U_{A(u)}(t,s)-U_{A(\bar u)}(t,s)-U_{A(u)}(\tau,s)+U_{A(\bar u)}(\tau,s)\|_{\mathcal{L}(E_\alpha,E_\beta)} \|f(u(s))\|_{E_\gamma}\,\rd s\\
&\qquad + c \int_0^\tau \|U_{A(\bar u)}(t,s)-U_{A(\bar u)}(\tau,s)\|_{\mathcal{L}(E_\alpha,E_\beta)} \|f(u(s))-f(\bar u(s))\|_{E_\gamma}\,\rd s\\
&\qquad + c\int_\tau^t \|U_{A(u)}(t,s)-U_{A(\bar u)}(t,s)\|_{\mathcal{L}(E_\alpha,E_\beta)}  \|f(u(s))\|_{E_\gamma}\,\rd s\\
&\qquad+ \int_\tau^t \|U_{A(\bar u)}(t,s)\|_{\mathcal{L}(E_\gamma,E_\beta)} \|f(u(s))-f(\bar u(s))\|_{E_\gamma}\,\rd s\\
&\le c(r) (t-\tau)^\rho \big(L^{\ell+1}+L^{\ell}\big) d_{X_L}(u,\bar u)\,
\end{align*}
that is,
\begin{align}\label{i9}
\|F_2(u)- F_2(\bar u)\|_{C^\rho([0,T],E_\beta)}\le c(r)\big(L^{\ell+1}+L^{\ell}\big) d_{X_L}(u,\bar u)\,.
\end{align}
Finally, \eqref{h8} (with $(\alpha,\beta)$ replaced by $(\gamma,\xi)$),  \eqref{n1}, and \eqref{n2} imply for $0<t\le T$ that
\begin{align*}
\|F_2(u)(t) -  F_2(\bar u)(t)\|_{E_\xi}&\le  \int_0^t \|U_{A(u)}(t,s)-U_{A(\bar u)}(t,s)\|_{\mathcal{L}(E_\gamma,E_\xi)} \|f(u(s))\|_{E_\gamma}\,\rd s \\
&\qquad +\int_0^t \|U_{A(\bar u)}(t,s)\|_{\mathcal{L}(E_\gamma,E_\xi)} \|f(u(s))-f(\bar u(s))\|_{E_\gamma}\,\rd s \\
&\le c(r) \big(1+t^{1+\gamma_0-\xi-\mu(\ell+1)}\big) \big(L^{\ell+1}+L^{\ell}\big) d_{X_L}(u,\bar u)\,,
\end{align*}
 with $\gamma_0\in (\alpha,\gamma)$. Recalling~\eqref{mm}, it thus follows that
\begin{align}\label{i99}
\|F_2(u) -  F_2(\bar u)\|_{C_\mu((0,T],E_\xi)}&\le   c(r) \big(L^{\ell+1}+L^{\ell}\big) d_{X_L}(u,\bar u)\,.
\end{align}
Consequently, we conclude from~\eqref{i8}-\eqref{i99}, making $m_0>0$ and $L$ smaller, if necessary,  that  the mapping~$F:X_L\to X_L$ is a contraction, so that Banach's fixed point theorem entails the existence of a unique  $u\in X_L$ with $u=F(u)$.
Since $\Phi_{A(u)}^{-1}(M+\Psi(u) f(u))\in E_\alpha$ with $\alpha\in (0,1)$ and since~$f(u)\in C_\mu((0,T],E_\xi)$, 
one may argue as in the proof of \cite[Proposition 2.1]{MW_PRSE} to deduce that~$u$ is a solution to~\eqref{EE} with regularity as stated in Theorem~\ref{MT1}. 
This proves the claim.\qed

\medskip

We note that Theorem~\ref{MT1} also applies to semilinearities $f=f(u)$ that may depend on $u$ in a nonlocal manner with respect to time:

\begin{rem}
Note that in the proof of Theorem~\ref{MT1}, we only used the properties~\eqref{n1}-\eqref{n2} for the semilinear part $f$. That is, Theorem~\ref{MT1} still remains valid if~\eqref{AA} is replaced by the assumption that 
$$
f:C_{\mu}((0,T],E_\xi)\cap C^\rho([0,T],E_\beta)\to C_{\mu(\ell+1)}((0,T],E_\gamma)
$$
is a mapping with $f(0)=0$, and there is $c>0$ such that, for each $L\in (0,1)$,
\begin{equation*}
\|f(u)-f(\bar u)\|_{C_{\mu(\ell+1)}((0,T],E_\gamma)}\le c L^\ell \left(\|u-\bar u\|_{C_{\mu}((0,T],E_\xi)}+\|u-\bar u\|_{C^\rho([0,T],E_\beta)}\right)
\end{equation*}
whenever 
$$\|u\|_{C_{\mu}((0,T],E_\xi)}+\|u\|_{C^\rho([0,T],E_\beta)}\le L \qquad \text{and}\qquad \|\bar u\|_{C_\mu((0,T],E_\xi)}+\|\bar u\|_{C^\rho([0,T],E_\beta)}\le L\,,
$$ 
where we assume~\eqref{A1Y} and~\eqref{n0}.
\end{rem}

%%%%%%%%%%%%%%%%%%%%%%%%%%%%%%%%%%%%%%%%%%%%
%%%%%%%%%%%%%%%%%%%%%%%%%%%%%%%%%%%%%%%%%%%%
\section{Applications}\label{Sec5}
%%%%%%%%%%%%%%%%%%%%%%%%%%%%%%%%%%%%%%%%%%%%
%%%%%%%%%%%%%%%%%%%%%%%%%%%%%%%%%%%%%%%%%%%%

We provide two examples for recovering the initial state in quasilinear parabolic equations from time averages. 

%%%%%%%%%%%%%%%%%%%%%%%%%%%%%%%%%%%%%%%%%%%%%%%%%%%%%%%
%%%%%%%%%%%%%%%%%%%%%%%%%%%%%%%%%%%%%%%%%%%%%%%%%%%%%%%
\subsection{A Chemotaxis System with Local Sensing} 
%%%%%%%%%%%%%%%%%%%%%%%%%%%%%%%%%%%%%%%%%%%%%%%%%%%%%%%
%%%%%%%%%%%%%%%%%%%%%%%%%%%%%%%%%%%%%%%%%%%%%%%%%%%%%%%

 As a first application, we consider a chemotaxis system with local sensing related to the classical Keller–Segel model \cite{KS1971}, recently studied in \cite{AY19,JL24}. 
This system describes the evolution of a cell population with density $u$ in response to a chemical signal with concentration $v$, and reads 
\begin{subequations}\label{Exem1}
\begin{align}
\partial_t u&=\nabla \cdot \left( a(v)\nabla u - \chi(v) u \nabla v \right)\,,&& t\in(0,T)\,,\quad x\in \Omega\label{ex1a}\,,\\
0&=\Delta v +u-v\,,&& t\in(0,T)\,,\quad x\in \Omega\,.\label{ex1b}
\end{align}
Here, $\Omega$ is a smooth bounded domain in $\R^n$ with $n\in\{1,2,3\}$, $T>0$ is a fixed time, $a(v)$ represents the diffusion coefficient of the cells, while $\chi(v)$ denotes 
their sensitivity to the chemical gradient.
 These equations are subject to homogeneous Neumann boundary conditions
\begin{align}\label{ex1c}
 \partial_\nu u=   \partial_\nu v&= 0\,, \quad  t\in(0,T]\,,\quad x\in \partial\Omega\,\,.
\end{align}
Instead of an initial condition we assume that the local cumulative population $M$ over time has been observed; that is,
\begin{align}\label{ex1d}
 \int_0^T u(t,x)\,{\rm d}t=M(x)\,,\quad x\in \Omega\,.
\end{align}
\end{subequations}
For simplicity we take here a constant time weight (though more general time weights $\ww=\ww(t)$ can easily be considered).
We demonstrate that this problem is well-posed, hence the cumulative population identifies a unique initial state $u(0)$.

Again, in order to keep things simple, we impose that 
\begin{equation}\label{Cond:1}
a,\chi\in C^\infty(\R)\qquad\text{and}\qquad a(0)>0\,,
\end{equation}
but less regularity is needed, of course. In the Keller–Segel models, natural choices for $a$ include the function~$a(s)=e^{-\delta s}$ with $\delta>0$   (while $\chi=-a'$); see, e.g.,~\cite{JL24}, which motivates assumption~\eqref{Cond:1}.\\

In order to tackle~\eqref{Exem1}, we write it in the form of \eqref{EE}. For this purpose, we denote by $\Delta_N$ the Neumann-Laplacian. Then, since $v=(1-\Delta_N)^{-1} u$ by~\eqref{ex1b}-\eqref{ex1c} and since
\begin{align*}
\nabla \cdot \left( a(v)\nabla u - \chi(v) u \nabla v \right)&=a(v)\Delta u+(a-\chi)'(v)\nabla v\cdot\nabla u-u\chi(v)\Delta v\\
&=a(v)\Delta u+(a-\chi)'(v)\nabla v\cdot\nabla u-\chi(v)uv- \chi(v)u^2\,,
\end{align*}
we see that  \eqref{Exem1} can formally be written in the form of \eqref{EE}
$$
u'=A(u)u+f(u)\,,\quad t\in (0,T)\,,\qquad \int_0^T u(t)\,\rd t=M\,,
$$
by setting
\begin{equation}
\begin{aligned}\label{P}
A(u)w&:=a\big((1-\Delta_N)^{-1}u\big)\Delta w+(a-\chi)'(1-\Delta_N)^{-1}u\nabla w\cdot \nabla\big((1-\Delta_N)^{-1} u\big)\\
&\qquad-\chi\big((1-\Delta_N)^{-1} u\big)w(1-\Delta_N)^{-1}u
\end{aligned}
\end{equation}
and
\begin{equation}\label{Pf}
f(u):=- \chi\big((1-\Delta_N)^{-1} u\big)u^2\,.
\end{equation}
We will see that in this form the assumptions of Theorem~\ref{MT1} can be verified in order to get the following result:

\begin{thm}\label{T:Exem1}
Assume \eqref{Cond:1} and choose $s\in(2,5/2)$. 
Then, there exists a constant~$m_0>0$ such that for each $M\in H^s(\Omega)$  with $\partial_\nu M=0$ on $\partial\Omega$ and $\|M\|_{H^s}\leq m_0$ 
there exists a unique  solution to~\eqref{Exem1} satisfying
\begin{align*}
u&\in C^{\min\{s/2-1, 1-2\mu\}}\big([0,T],L_2(\Omega)\big)\cap C\big([0,T],H^{s-2}(\Omega)\big)\cap C\big((0,T],H^2(\Omega)\big)\cap C^{1}\big((0,T],L_2(\Omega)\big)
\end{align*}
such that $\lim_{t\to0^+}t^\mu \|u(t)\|_{H^{s/4}}=0$ for $ 1-s/4<\mu <1/2$.
\end{thm}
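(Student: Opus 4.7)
My strategy is to cast \eqref{Exem1} into the abstract form \eqref{EE} via the identifications \eqref{P}--\eqref{Pf} and then verify the hypotheses of Theorem~\ref{MT1}. The functional setting is $E_0:=L_2(\Omega)$ and $E_1:=\{u\in H^2(\Omega):\partial_\nu u=0\text{ on }\partial\Omega\}$, with $A(0)=a(0)\Delta_N\in\mathcal{H}(E_1,E_0)$ by standard generation theory since $a(0)>0$. The interpolation--extrapolation scale $(E_\theta)$ is identified with the Bessel-potential scale $H^{2\theta}(\Omega)$ (with Neumann boundary conditions incorporated once $2\theta>3/2$), so that $E_{1+\alpha}=H^{s}(\Omega)$ with Neumann trace when I take $\alpha:=s/2-1\in(0,1/4)$. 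I further set $\beta:=0$, $\ell:=1$ and choose $\alpha_0\in(\alpha,1/2)$, $\gamma\in(\alpha_0,1)$ close to $\alpha_0$, and $\xi$ such that $E_\xi=H^{s/4}(\Omega)$; with these parameters the Sobolev indices line up with the regularity claimed in Theorem~\ref{T:Exem1}, and the constant weight $\ww\equiv 1$ satisfies~\eqref{a1}.

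The analytical heart of the verification is the local Lipschitz regularity~\eqref{a5} for $u\mapsto A(u)$ in each of the three pairs $(E_1,E_0)$, $(E_{1+\alpha},E_\alpha)$ and $(E_{1+\alpha_0},E_{\alpha_0})$, together with the $C^{2-}$ bound on the middle pair. Here the smoothing estimate $(1-\Delta_N)^{-1}\in\mathcal{L}(H^\sigma(\Omega),H^{\sigma+2}(\Omega))$ is decisive: for $u$ in a ball of $E_\beta=L_2$ one has $(1-\Delta_N)^{-1}u\in H^2(\Omega)\hookrightarrow C(\overline{\Omega})$ because $n\le 3$, so that the Nemytskii operators $u\mapsto a((1-\Delta_N)^{-1}u)$ and $u\mapsto\chi((1-\Delta_N)^{-1}u)$ inherit smoothness from~\eqref{Cond:1}, along with their Fréchet derivatives up to second order. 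Combining this with pointwise-multiplication estimates in $L_2(\Omega)$, $H^{s-2}(\Omega)$ and $H^{2\alpha_0}(\Omega)$, and applying the chain rule, yields the three required bounds.

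For the semilinearity $f(u)=-\chi((1-\Delta_N)^{-1}u)u^2$ one clearly has $f(0)=0$; writing
\begin{align*}
f(v)-f(w)=-\chi((1-\Delta_N)^{-1}v)\,(v-w)(v+w)-\bigl[\chi((1-\Delta_N)^{-1}v)-\chi((1-\Delta_N)^{-1}w)\bigr]w^{2},
\end{align*}
the superlinear estimate~\eqref{F2} with $\ell=1$ follows: the difference of the $\chi$-factors is controlled in the sup norm by $\|v-w\|_{L_2}$, thanks to the smoothing of $(1-\Delta_N)^{-1}$ into $C(\overline{\Omega})$ and the smoothness of $\chi$, while the product $(v-w)(v+w)$ is estimated in $E_\gamma$ through Sobolev embedding/Hölder pointwise-multiplication inequalities, provided $\gamma$ is chosen close enough to $\alpha_0$. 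The injectivity condition~\eqref{a6} is immediate by spectral decomposition: writing $\{\phi_k\}$ for the $L_2$-orthonormal Neumann eigenbasis with eigenvalues $-\lambda_k\le 0$, $\Phi_{A(0)}=\int_0^T e^{tA(0)}\,\rd t$ acts diagonally as multiplication by the strictly positive numbers $\int_0^T e^{-a(0)\lambda_k t}\,\rd t$.

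The main obstacle I anticipate is the bookkeeping for~\eqref{a5} on the extrapolated pair $(E_{1+\alpha},E_\alpha)=(H^{s},H^{s-2})$, specifically the $C^{2-}$ estimate: the leading-order part $a((1-\Delta_N)^{-1}u)\Delta w$ requires one to control the second Fréchet derivative of the Nemytskii operator $u\mapsto a((1-\Delta_N)^{-1}u)$, viewed as a bilinear map from $E_\beta\times E_\beta$ into the space of multipliers on $H^{s-2}(\Omega)$, with local Lipschitz dependence on $u$. Carrying this out rigorously (simultaneously for all three interpolation pairs, while keeping track of the boundary-condition shift at $2\theta=3/2$) is the delicate bookkeeping step; once it is done, the first-order and reaction-type terms in~\eqref{P} are treated analogously, all remaining hypotheses of Theorem~\ref{MT1} are satisfied, and that theorem directly yields the well-posedness and precise regularity asserted in Theorem~\ref{T:Exem1}.
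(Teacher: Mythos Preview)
Your overall strategy—recasting \eqref{Exem1} as \eqref{EE} via \eqref{P}–\eqref{Pf} and verifying the hypotheses of Theorem~\ref{MT1} in the scale $E_\theta\doteq H^{2\theta}_N(\Omega)$—matches the paper's proof, and your treatment of the quasilinear part $A$ and of the injectivity of $\Phi_{A(0)}$ is correct in outline.

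The genuine gap lies in your choice of $\xi$. Taking $E_\xi=H^{s/4}(\Omega)$ forces $\xi=s/8$, so $2\xi=s/4\in(1/2,5/8)$. In dimension $n=3$ the product of two $H^{s/4}$–functions is not even in $L_2(\Omega)$, and in dimension $n=2$ the Sobolev multiplication theorem only gives $H^{2\xi}\cdot H^{2\xi}\hookrightarrow H^\sigma$ for $\sigma<4\xi-1=s/2-1=2\alpha$; in either case the quadratic estimate \eqref{F2} with $\ell=1$ cannot hold for any $\gamma>\alpha_0>\alpha$. The paper instead takes $\xi:=s/4$, so that $E_\xi=H^{s/2}$, and uses the trilinear bound $H^2\cdot H^{2\xi}\cdot H^{2\xi}\hookrightarrow H^{2\gamma}$, valid for all $n\le 3$ once $\gamma<1/4$. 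This value of $\xi$ is also the one dictated by the stated $\mu$–range, since $(\xi-\alpha)_+=1-s/4$ precisely when $\xi=s/4$; the appearance of $H^{s/4}$ in the theorem statement should therefore be read as $H^{s/2}$. A secondary point: the paper keeps both $\alpha_0$ and $\gamma$ below $1/4$ (rather than your $\alpha_0<1/2$, $\gamma<1$), which is what makes the first–order term $(a-\chi)'(v)\,\nabla v\cdot\nabla w$ land in $E_\theta$ via the product estimate $H^1\cdot H^1\hookrightarrow H^{2\theta}$.
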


\begin{proof}
Set $E_0:=L_2(\Omega)$ and 
\[
E_1:=H^2_N(\Omega):=\{v\in H^{2}(\Omega) \,:\, \p_\nu v=0 \text{ on } 
 \partial\Omega\}\,.\] 
 Then, $E_1$ is compactly and densely embedded into $E_0$ and $1-\Delta_N\in \mathcal{L}is(H^{2}_N(\Omega),L_2(\Omega))$. 
 Thus,~$A$ from~\eqref{P} is a well-defined mapping $A:E_0\to\mathcal{L}(E_1,E_0)$ with $A(0)=a(0)\Delta_N\in \mathcal{H}(E_1,E_0)$. It then follows from \cite[Theorem~7.1; Equation (7.5)]{Amann_Teubner} and \cite[Theorem 5.5.1]{Tr78} that the interpolation-extrapolation scale generated by $(E_0,1-A(0))$ and  the complex interpolation functor~$[\cdot,\cdot]_\theta$  is given by
\begin{equation}\label{Hs}
E_\theta\doteq H_{N}^{2\theta}(\Omega):=\left\{\begin{array}{ll} \{v\in H^{2\theta}(\Omega) \,:\, \p_\nu v=0 \text{ on } 
 \partial\Omega\}\,, &3/2<2\theta< 7/2 \,,\\[3pt]
	 H^{2\theta}(\Omega)\,, & 0\le 2\theta<3/2\,.\end{array} \right.
\end{equation}
Note that
 \begin{equation}\label{reg:10}
  A\in   C^\infty\big(E_0,\kL(E_{1+\theta},E_\theta)\big)\,,\qquad \theta\in [0,1/4)\,.
  \end{equation}
 Indeed, if $F\in C^\infty(\R)$, it is straightforward to prove that the Nemitskii operator $F(\cdot):=[u\mapsto F(u)]$ satisfies 
  \begin{equation}\label{Nemop}
F(\cdot)\in C^\infty(E_1,E_1)
  \end{equation}
and is uniformly Lipschitz continuous on  bounded subsets of $E_1$.
  Moreover,  the bilinear pointwise multiplications   
\begin{subequations}\label{bilmult}
 \begin{align} 
  & H^2(\Omega)\bullet H^{2\vartheta}(\Omega)\longrightarrow H^{2\vartheta}(\Omega)\,,\quad 0\le 2\vartheta\le 2\,,\label{bilmult1}\\
 & H^1(\Omega)\bullet H^{1}(\Omega)\longrightarrow H^{2\theta}(\Omega)\,,\quad 0<2\theta<\frac{1}{2}\,,\label{bilmult2}
  \end{align}
\end{subequations} 
are both  continuous, where \eqref{bilmult1} is due to the fact that (see \cite[Theorem~2.1]{AmannMult}) $$[u\mapsto au]\in\mathcal{L}\big(L_2(\Omega)\big)\cap\mathcal{L}\big(H^2(\Omega)\big)\,,\quad a\in H^2(\Omega)\,,$$  and interpolation, while \eqref{bilmult2} is due to \cite[Theorem~4.1]{AmannMult}. The regularity properties~\eqref{reg:10} follow now from~\eqref{Hs},~\eqref{Nemop}–\eqref{bilmult}, and the fact that (see \cite[Theorem 5.5.1]{Tr78})
\begin{equation}\label{isomex1}
 1-\Delta_N\in \mathcal{L}is\big(H^{2\theta}_N(\Omega),H^{2\theta-2}_N(\Omega)\big)\,,  \qquad 2\leq 2\theta<7/2\,.
  \end{equation}
As for the semilinear part we point out that, due to the quadratic term $u^2$, the term $f(u)$ cannot be incorporated in the quasilinear structure without violating property~\eqref{reg:10}. 
  Instead, recalling that~$s\in(2,5/2)$, we may choose exponents $\alpha,\alpha_0,\gamma,\xi\in(0,1)$ such that
  \begin{equation}\label{exponents}
  0=\beta<\alpha:=\frac{s}{2}-1<\alpha_0<\gamma<\frac{1}{4}\qquad\text{and}\qquad \frac{1}{2}<\xi:= \frac{s}{4}<\frac{s-1}{2}<\frac{3}{4}\,.
  \end{equation}
  Then, using the continuity of pointwise multiplication (see \cite[Theorem~4.1]{AmannMult}) 
 \begin{align*} 
   H^2(\Omega)\bullet H^{2\xi}(\Omega)\bullet H^{2\xi}(\Omega)\longrightarrow H^{2\gamma}(\Omega)\,,
  \end{align*}
we infer that
\begin{align*}
\|f(u)-f(w)\|_{H^{2\gamma}}&\le c  \|\chi((1-\Delta_N)^{-1}u)-\chi(  (1-\Delta_N)^{-1}w)\|_{H^2}\|u\|_{H^{2\xi}}^2\\
&\qquad+\|\chi((1-\Delta_N)^{-1}w)\|_{H^2}\|u+w\|_{H^{2\xi}}\|u-w\|_{H^{2\xi}}\,. 
\end{align*}
Taking~\eqref{Hs}, \eqref{Nemop}, and~\eqref{isomex1} into account, we can find for a  given $r_0>0$ a constant~$c=c(r_0)>0$ such that, for $v,w\in E_\xi\cap \mathbb{B}_{E_\beta}(0,r_0)$,
\begin{equation}\label{F2ex1}
\|f(u)-f(w)\|_{E_\gamma}\le c(r_0) \big(\|u\|_{E_\xi}^{2}\|v-w\|_{E_\beta}+\big[\|u\|_{E_\xi}+\|w\|_{E_\xi}\big] \|u-w\|_{E_\xi}\big)\,.
\end{equation}
Finally, writing the operator
  \[
\Phi_{A(0)}=\int_0^Te^{t a(0)\Delta_N}\,{\rm d}t  \in\mathcal{K}(L_2(\Omega))
  \] 
in a Fourier series expansion in $L_2(\Omega)$ with respect to a orthonormal basis of eigenfunctions of~$\Delta_N$, the injectivity assumption \eqref{a6} is easily verified (since all eigenvalues of $a(0)\Delta_N$ are nonnegative); see \cite[Proposition 6.1]{SchmitzWalkerJDE}.
Consequently, gathering~\eqref{reg:10} and~\eqref{exponents}-\eqref{F2ex1}, we see that the assumptions of Theorem~\ref{MT1} are satisfied, and the proof of Theorem~\ref{T:Exem1} is complete.
\end{proof}

\begin{rems}
{\bf (a)} Clearly, we did not strive for the most general result when stating Theorem~\ref{T:Exem1}. In fact, less assumptions on the data are required and it is also possible to consider the problem in an  $L_p$-setting with $p>2$ what can be useful for other nonlinearities. 
In this case, one derives the injectivity of $\Phi_{A(0)}$ from the $L_2$-case (see \cite[Section~6]{SchmitzWalkerJDE} or the subsequent Subsection~\ref{S42} for details). 

{\bf (b)} Instead of the chemotaxis model, one may also consider other reaction-diffusion systems, e.g. of the form
\begin{align*}
\partial_t u_i &=\mathrm{div}\big(d_i(T)\nabla u_i)+f_i(u_1,...,u_N,T)\,,\qquad t\in (0,T)\,,\quad x\in \Omega\,,\quad 1\le i\le N\,,\\
-\Delta T&=g(u_1,...,u_N)\,,\qquad t\in (0,T)\,,\quad x\in \Omega\,,
\end{align*}
involving $N$ reactants $u_i$ and a quasi-stationary temperature $T$, 
subject to  suitable boundary conditions (e.g. of Dirichlet type for the temperature) and the conditions
$$
\int_0^T u_i(t,x)\,\rd t=M_i(x)\,,\qquad  x\in \Omega\,,\quad 1\le i\le N\,,
$$
where $f_i$ are polynomials.
\end{rems}

%%%%%%%%%%%%%%%%%%%%%%%%%%%%%%%%%%%%%%%%%%%%%%%%%%%%%%%
%%%%%%%%%%%%%%%%%%%%%%%%%%%%%%%%%%%%%%%%%%%%%%%%%%%%%%%
\subsection{Quasilinear Reaction-Diffusion Systems with Nonlocal Coefficients}\label{S42}
%%%%%%%%%%%%%%%%%%%%%%%%%%%%%%%%%%%%%%%%%%%%%%%%%%%%%%%
%%%%%%%%%%%%%%%%%%%%%%%%%%%%%%%%%%%%%%%%%%%%%%%%%%%%%%%

We consider the problem of recovering the initial state in nonlinear reaction-diffusion systems with nonlocal diffusion coefficients  (e.g. see~\cite{CaballeroEtal_22,ChipotLovat99,ChipotSiegwart03,FerreiraBorges17}). More precisely, we investigate abstract formulations of the quasilinear prototype problem
\begin{subequations}\label{PA}
\begin{align}
\partial_t u &=\mathcal{A}(x,u)u +f(u)\,,\qquad t\in (0,T)\,,\quad x\in\Omega\,,\label{PA1}\\
\mathcal{B}u&:=(1-\delta)u+\delta\partial_\nu u=0\,,\qquad t\in (0,T)\,,\quad x\in\partial\Omega\,,\label{PA2}\\
\int_0^T &\ww(t)u(t,x)\,\rd t=M(x)\,,\qquad  x\in \Omega\,,\label{PA3}
\end{align}
\end{subequations}
where $\mathcal{A}$ is a  second-order operator on a bounded and smooth domain $\Omega$ of $\R^n$, $n\geq 1$, given by
\begin{align*}
\mathcal{A}(x,u)v=\sum_{j,k=1}^n \partial_j\big(a_{j,k}(x,\ell_{j,k}(u))\partial_k v\big)+\sum_{j=1}^na_j(x,\ell_j(u))\partial_j v+a_0(x,\ell_0(u))v\,.
\end{align*}
Here, the coefficients satisfy 
\begin{subequations}\label{Z}
\begin{align}\label{z0}
a_{j,k}, a_j, a_0\in C^\infty\big(\bar\Omega\times\R,\R\big)
\end{align} 
and $\ell_{j,k}, \ell_j, \ell_0$ are real-valued smooth (possibly nonlinear) functionals on $L_p(\Omega)$, i.e.
\begin{align}\label{z00}
\ell_{j,k}, \ell_j, \ell_0\in C^\infty\big(L_p(\Omega),\R\big)\,.
\end{align}
 Examples of the latter include linear functionals of the form
$$
\ell(u)=\int_\Omega m(x) u(x)\,\rd x,
$$
with some weight $m\in L_{p'}(\Omega)$. 
We assume in~\eqref{PA2} that either $\delta=0$ (Dirichlet case) or $\delta=1$  (Neumann case). 
Moreover, we assume uniform ellipticity of $\mathcal{A}(x,0)$ in the sense that there exists  a constant~$\underline{a}>0$ such that
\begin{align}\label{z1}
\sum_{j,k=1}^n a_{j,k}(x,\ell_{j,k}(0))\xi^j\xi^k\ge \underline{a}\vert\xi\vert^2\,,\quad (x,\xi)\in\bar\Omega\times\R^n\,,
\end{align}
and impose (for simplicity) the symmetry assumption 
\begin{align}\label{z2}
a_{j,k}(\cdot,\ell_{j,k}(0))=a_{k,j}(\cdot,\ell_{k,j}(0))\,,\quad a_{j}(\cdot,\ell_{j}(0))=0\,,\qquad 1\le j,k\le n\,.
\end{align}
We consider a weight function 
\begin{align}\label{z3}
\ww\in C^1\big([0,T],\R^+)\,,\qquad \ww(0)>0\,.
\end{align}
As for the semilinearity we assume that
\begin{align}\label{z4}
f\in C^1(\R,\R)\,,\qquad f(0)=f'(0)=0\,,
\end{align}
and there is $\ell\ge 1$ such that
\begin{align}\label{z5}
\vert f'(r)-f'(s)\vert\le c\big(\vert r\vert^{\ell-1}+\vert s\vert^{\ell-1}\big)\vert r-s\vert\,,\quad r,s\in\R\,.
\end{align}
\end{subequations}
We can now show that the problem~\eqref{PA} is well-posed under these assumptions. Indeed, let 
$$
E_0:=L_p(\Omega)\,,\qquad E_1:=H_{p,\mathcal{B}}^2(\Omega):=\{u\in H_p^2(\Omega)\,;\, \mathcal{B}u=0 \text{ on } \partial\Omega\}
$$ for some $p\ge 2$ (to be specified later). Setting
$$
A_p(u):=\mathcal{A}(\cdot,u)\vert_{H_{p,\mathcal{B}}^2(\Omega)}\,,\quad u\in L_p(\Omega)\,,
$$ 
it follows from \eqref{z0}-\eqref{z00} that
\begin{align}\label{H1}
A_p\in C^\infty\big(L_p(\Omega),\mathcal{L}(H_p^{2+2\theta}(\Omega),H_p^{2\theta}(\Omega))\big)\,,\quad  2\theta\ge 0\,,
\end{align}
while~\eqref{z1}-\eqref{z2} imply (see \cite{Amann_Teubner}) that 
\begin{align}\label{H2}
A_p(0)\in \mathcal{H}\big(H_{p,\mathcal{B}}^2(\Omega),L_p(\Omega)\big)
\end{align}
with spectrum consisting of countably many real eigenvalues and independent of $p\ge 2$. 
In particular, the Fourier series expansion of
$$
\Phi_{A_2(0)}=\int_0^T\ww(t)e^{tA_2(0)}\,\rd t\in\mathcal{K}(L_2(\Omega))
$$
 with respect to an orthonormal basis in $L_2(\Omega)$ consisting of eigenfunctions of~$A_2(0)$
has coefficients $\int_0^T\ww(t)e^{t\lambda}\,\rd t>0$ with $\lambda\in \sigma(A_2(0))$ (due to~\eqref{z3}) and is thus injective. 
 In view of the property $\Phi_{A_2(0)}\vert_{L_p(\Omega)}=\Phi_{A_p(0)}$ for $p\ge 2$, it thus follows that
\begin{align}\label{H3}
\mathrm{ker}\big(\Phi_{A_p(0)}\big)=\{0\}\,.
\end{align}
Next, using $f'(0)=0$, we deduce exactly as in \cite[Lemma~4.1]{MW_PRSE} from~\eqref{z4} that
\begin{align}\label{H4}
\|f(u)-f(v)\|_{W_p^{2\theta}}\le c\big(\|u\|_{W_p^{2\theta}}^\ell+\|v\|_{W_p^{2\theta}}^\ell\big)\|u-v\|_{W_p^{2\theta}}\,,\qquad u,v\in W_p^{2\theta}(\Omega)\,,
\end{align}
provided that $2\theta>n/p$.
Now,  \cite[Theorem~7.1; Equation (7.5)]{Amann_Teubner} and \cite[Theorem 5.5.1]{Tr78} yield again that the interpolation-extrapolation scale generated by $(L_p(\Omega),\omega-A_p(0))$ (with $\omega>0$ sufficiently large) and  the complex interpolation functor~$[\cdot,\cdot]_\theta$  is given by
\begin{equation}\label{H5}
E_\theta\doteq H_{p,\mathcal{B}}^{2\theta}(\Omega):=\left\{\begin{array}{ll} \{v\in H_{p,\mathcal{B}}^{2\theta}(\Omega) \,:\, \mathcal{B} v=0 \text{ on } 
 \partial\Omega\}\,, &1+1/p<2\theta< 2+\delta+1/p \,,\\[3pt]
	 H_{p,\mathcal{B}}^{2\theta}(\Omega)\,, & 0\le 2\theta<1+1/p\,.\end{array} \right.
\end{equation}
Let $(\ell+1)n<p<\infty$ and choose 
\begin{equation}\label{H6}
 0<\alpha<\alpha_0<\min\left\{\frac{n}{p}, \frac{1}{2}\left(\delta+\frac{1}{p}\right)\right\}\,.
\end{equation}
Thus, we find $\xi$ such that
\begin{equation}\label{H7}
\frac{n}{p}<\xi<\min\left\{\alpha+\frac{1}{\ell+1} ,1\right\}
\end{equation}
 and   $\gamma$ with
\begin{equation}\label{H8}
0=:\beta<\alpha<\alpha_0<\gamma<\xi\,.
\end{equation}
Observe then from~\eqref{H4},~\eqref{H5},~\eqref{H7}, and~\eqref{H8} that
\begin{align}\label{H44}
\|f(u)-f(v)\|_{E_\gamma}\le c\big(\|u\|_{E_\xi}^\ell+\|v\|_{E_\xi}^\ell\big)\|u-v\|_{E_\xi}\,,\quad u,v\in E_\xi\,,
\end{align}
while \eqref{H1}, \eqref{H2}, and~\eqref{H5} give
\begin{align}\label{H11}
A_p\in C^\infty\big(E_0,\mathcal{L}(E_{1+\theta},E_\theta)\big)\,,\quad \theta\in\{0,\alpha,\alpha_0\}\,,\qquad A_p(0)\in \mathcal{H}(E_1,E_0)\,.
\end{align}
Consequently, writing~\eqref{PA} in the form
$$
u'=A_p(u)u+f(u)\,,\quad t\in (0,T)\,,\qquad \int_0^T\ww(t) u(t)\,\rd t=M\,,
$$
and gathering~\eqref{H3},~\eqref{H8},~\eqref{H44}, and~\eqref{H11}, we may invoke Theorem~\ref{MT1} to deduce:

\begin{thm}\label{T:Exem2}
Assume \eqref{Z}. Consider $(\ell+1)n<p<\infty$ and   let  $\alpha$, and $\xi$ be as in~\eqref{H6} respectively~\eqref{H7}.
Then, there is a constant~$m_0>0$ such that for each $M\in H_p^{2+2\alpha}(\Omega)$  with~$\mathcal{B} M=0$ on~$\partial\Omega$ and $\|M\|_{H_p^{2+2\alpha}}\leq m_0$, 
there exists a unique  solution to~\eqref{PA} satisfying
\begin{align*}
u&\in C^{\min\{\alpha,1-\mu(\ell+1)\}}\big([0,T],L_p(\Omega)\big)\cap C\big([0,T],H_p^{2\alpha}(\Omega)\big)\cap C\big((0,T],H_p^2(\Omega)\big)\cap C^{1}\big((0,T],L_p(\Omega)\big)
\end{align*}
such that $\lim_{t\to0^+}t^\mu \|u(t)\|_{H^{2\xi}_p}=0$ for $ \xi-\alpha<\mu<(\ell+1)^{-1}$.
\end{thm}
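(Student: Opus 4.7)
The plan is to verify that problem~\eqref{PA} fits the abstract framework of Theorem~\ref{MT1} with $E_0:=L_p(\Omega)$ and $E_1:=H^2_{p,\mathcal{B}}(\Omega)$, and then simply invoke that theorem. To this end I would first cast \eqref{PA} abstractly by introducing $A_p(u):=\mathcal{A}(\cdot,u)|_{H^2_{p,\mathcal{B}}(\Omega)}$, so that \eqref{PA1}--\eqref{PA3} become $u'=A_p(u)u+f(u)$ subject to $\int_0^T\ww(t)u(t)\,\rd t=M$. Then I would need to check, one by one, the assumptions \eqref{EQ:A} and \eqref{AA} of Theorem~\ref{MT1} for a suitable choice of interpolation exponents.

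The verification splits into four steps. First, the generator property \eqref{a3}, namely $A_p(0)\in\mathcal{H}(E_1,E_0)$, follows from the uniform ellipticity~\eqref{z1} together with the symmetry condition~\eqref{z2} and standard results on strongly elliptic boundary value problems in $L_p$. Second, to identify the interpolation-extrapolation scale and get the concrete characterization~\eqref{H5} of $E_\theta$, I would appeal to the results of Amann together with Triebel's identification of the Bessel potential spaces with the Dirichlet/Neumann boundary conditions built in; this relies crucially on the symmetry condition~\eqref{z2}. Third, the Lipschitz regularity~\eqref{a5} of $A_p$ on balls of $E_\beta=L_p(\Omega)$ with values in $\mathcal{L}(E_{1+\theta},E_\theta)$ for $\theta\in\{0,\alpha,\alpha_0\}$ is a consequence of the smoothness~\eqref{z0}--\eqref{z00} of the coefficients and functionals: since $a_{j,k}(\cdot,\ell_{j,k}(u))$, $a_j(\cdot,\ell_j(u))$, $a_0(\cdot,\ell_0(u))$ depend smoothly on $u\in L_p(\Omega)$ through the scalar functionals, multiplication by such coefficients maps $H^{2+2\theta}_p$ into $H^{2\theta}_p$ smoothly on the relevant range of $\theta$. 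Fourth, the semilinear estimate~\eqref{F2} follows from the Sobolev embedding $H^{2\xi}_p(\Omega)\hookrightarrow L_\infty(\Omega)$ (ensured by $n/p<\xi$, which is why one imposes $(\ell+1)n<p<\infty$) combined with $f(0)=f'(0)=0$ and \eqref{z5}; I would argue as in~\cite[Lemma~4.1]{MW_PRSE} to obtain \eqref{H44}.

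The main obstacle, and the place that requires a little trick rather than routine verification, is the injectivity condition~\eqref{a6}, i.e. $\mathrm{ker}(\Phi_{A_p(0)})=\{0\}$ for arbitrary $p\ge 2$. The clean way is to reduce to $p=2$: since $A_2(0)$ is self-adjoint on $L_2(\Omega)$ with compact resolvent (by \eqref{z1}--\eqref{z2}) and has spectrum independent of $p$, one diagonalizes $A_2(0)$ through an orthonormal basis $(\varphi_k)$ of eigenfunctions with eigenvalues $\lambda_k\in\mathbb{R}$, giving
\begin{equation*}
\Phi_{A_2(0)}\varphi_k=\left(\int_0^T\ww(t)e^{t\lambda_k}\,\rd t\right)\varphi_k\,.
\end{equation*}
The positivity assumption $\ww\in C^1([0,T],\R^+)$ with $\ww(0)>0$ from~\eqref{z3} forces every Fourier coefficient $\int_0^T\ww(t)e^{t\lambda_k}\,\rd t$ to be strictly positive, so $\Phi_{A_2(0)}$ is injective on $L_2(\Omega)$. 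The identity $\Phi_{A_2(0)}|_{L_p(\Omega)}=\Phi_{A_p(0)}$ (valid since $L_p(\Omega)\hookrightarrow L_2(\Omega)$ as $\Omega$ is bounded and $p\ge 2$) then transports injectivity to $L_p(\Omega)$, yielding~\eqref{H3}.

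With all hypotheses in place, the final step is to select the exponents: choose $p$ satisfying $(\ell+1)n<p<\infty$, then $\alpha<\alpha_0$ as in~\eqref{H6}, $\xi$ as in~\eqref{H7}, and $\gamma$ with $\alpha_0<\gamma<\xi$ as in~\eqref{H8}, which together with $\beta=0$ and the previous steps verify all of \eqref{EQ:A} and \eqref{AA}. Theorem~\ref{MT1} then delivers a unique solution with the stated regularity for all $M\in H^{2+2\alpha}_p(\Omega)$ satisfying $\mathcal{B}M=0$ and $\|M\|_{H^{2+2\alpha}_p}\le m_0$, completing the proof.
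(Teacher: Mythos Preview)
Your proposal is correct and follows essentially the same route as the paper: set $E_0=L_p(\Omega)$, $E_1=H^2_{p,\mathcal{B}}(\Omega)$, verify \eqref{EQ:A} and \eqref{AA} via the smoothness of the coefficients, elliptic theory, the identification~\eqref{H5} of the complex interpolation scale, the Nemytskii estimate from \cite[Lemma~4.1]{MW_PRSE}, and the Hilbert-space reduction for the injectivity of $\Phi_{A_p(0)}$, then invoke Theorem~\ref{MT1}. One small remark: the symmetry condition~\eqref{z2} is not really needed for the identification of the interpolation scale~\eqref{H5} (that is standard elliptic theory), but rather to make $A_2(0)$ self-adjoint so that the $L_2$ eigenfunction expansion and the positivity argument for the Fourier coefficients go through.
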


Although the assumptions and, in particular, the choice of the parameter~$p$ in Theorem~\ref{T:Exem2} are sufficient for our purposes, they are not claimed to be optimal and may admit further refinement.

%%%%%%%%%%%%%%%%%%%%%%%%%%%%%%%%%%%%%%%%%%%%%%%%%%%%%%%%
%%%%%%%%%%%%%%%%%%%%%%%%%%%%%%%%%%%%%%%%%%%%%%%%%%%%%%%%
\section{Quasilinear Equations with Other Nonlocal Conditions}\label{Sec4}
%%%%%%%%%%%%%%%%%%%%%%%%%%%%%%%%%%%%%%%%%%%%%%%%%%%%%%%%
%%%%%%%%%%%%%%%%%%%%%%%%%%%%%%%%%%%%%%%%%%%%%%%%%%%%%%%%

As announced in the introduction one may also consider the evolution equation~\eqref{EEeq} subject to other nonlocal conditions such as~\eqref{other1} and~\eqref{other2}.
These problems are studied below.

\subsection*{Well-Posedness for the Nonlocal Condition~\eqref{other1}}

We consider
\begin{equation}\label{EE2}
u'=A(u)u+f(u)\,,\quad t\in(0,T]\,,\qquad u(0)+\int_0^T  \ww(t)\, u(t)\,\rd t=M\,.
\end{equation}
Formally, a solution to~\eqref{EE2} solves 
\begin{equation*}\label{EE2x}
u(t)=U_{A(u)}(t,0)\big(1+\Phi_{A(u)}\big)^{-1}\big(M+\Psi(u) f(u)\big)+\int_0^t U_{A(u)}(t,s)f(u(s))\,\rd s\,,\quad t\in [0,T]\,,
\end{equation*}
with $\Phi_{A(u)}$ and  $\Psi(u)$ defined in \eqref{Phi} and \eqref{Psi}, respectively.
This problem is substantially easier than~\eqref{EE} since, under suitable additional assumptions, the operator~$(1+\Phi_{A(u)})^{-1}$ is bounded on~$E_0$ (and on any interpolation space) and is thus of order zero. Therefore, we do not need to shift to extrapolation spaces of higher regularity and may require correspondingly weaker conditions. 

More precisely, we assume that
\begin{subequations}\label{AAxx}
\begin{equation}\label{a2x}
\text{$E_1$ is compactly and densely embedded in $E_0$}\,.
\end{equation}
Given $\theta\in (0,1)$, we fix an exact admissible interpolation functor $(\cdot,\cdot)_\theta$  of exponent $\theta$ and introduce again the Banach space $E_\theta:=(E_0,E_1)_\theta$.
Let
\begin{equation}\label{a3bx}
0\le \beta<\alpha< 1
\end{equation}
 and    assume that there is $r_0>0$ such that
\begin{equation}\label{a5xx}
A\in C^{1-}\big(\mathbb{B}_{E_\beta}(0,r_0), \mathcal{L}(E_1,E_0)\big)\,,\qquad A(0)\in \mathcal{H}(E_1,E_0)\,.
\end{equation}
As for the weight function we merely require that
\begin{equation}\label{a1x}
\ww\in C([0,T])\,,
\end{equation}
\end{subequations}
while for the semilinear part we fix
\begin{subequations}\label{AAf}
\begin{equation}\label{A1Yf}
 \ell>0\,,\qquad 0<\gamma\le 1\,,\qquad 0<\xi<\min\left\{ \alpha+\frac{1-(\alpha-\gamma)_+}{\ell+1},1\right\}\,,
\end{equation}
and assume that $f:E_\xi\cap \mathbb{B}_{E_\beta}(0,r_0)\to E_\gamma$ 
satisfies for $v,w\in E_\xi\cap \mathbb{B}_{E_\beta}(0,r_0)$
\begin{equation}\label{F2f}
\|f(v)-f(w)\|_{E_\gamma}\le c(r_0) \left(\left[\|v\|_{E_\xi}^\ell+\|w\|_{E_\xi}^\ell\right] \|v-w\|_{E_\xi}+ \left[\|v\|_{E_\xi}^{\ell+1}+\|w\|_{E_\xi}^{\ell+1}\right] \|v-w\|_{E_\beta}\right)
\end{equation}
for some constant $c(r_0)>0$, and that
\begin{equation}\label{F2xf}
\qquad f(0)=0\,.
\end{equation}
\end{subequations}
Moreover, the operator $\Phi_{A(0)}\in \mathcal{K}(E_0)$  (see the arguments leading to \eqref{p2})  is assumed to satisfy
\begin{equation}\label{o1}
-1\notin \sigma_p(\Phi_{A(0)})\,.
\end{equation}
Under these conditions, we can show that \eqref{EE2} is well-posed:

\begin{thm}\label{MT3}
Assume~\eqref{AAxx}, \eqref{AAf}, and \eqref{o1}.
 Then, there exists $m_0>0$ such that
\eqref{EE2}
 has for each $M\in E_{\alpha}$ with $\|M\|_{E_{\alpha}}\le m_0$ a unique solution
$$
u\in C^{\min\{\alpha-\beta, 1-\mu(\ell+1)\}}\big([0,T],E_\beta\big)\cap C\big([0,T],E_\alpha\big)\cap C\big((0,T],E_1\big)\cap C^{1}\big((0,T],E_0\big)
$$
such that  $\lim_{t\to 0^+}t^\mu\|u(t)\|_{E_\xi}=0$ for   $(\xi-\alpha)_+<\mu< (\ell+1)^{-1}$.
\end{thm}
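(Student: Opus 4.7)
The plan is to mimic the proof of Theorem~\ref{MT1} but in a substantially simpler form, since the presence of the term $u(0)$ in the nonlocal condition turns the operator that encodes the initial value into one of \emph{zeroth} order. First I would reformulate \eqref{EE2} as a fixed-point equation. Inserting the variation-of-constants representation
\[
u(t)=U_{A(u)}(t,0)u(0)+\int_0^t U_{A(u)}(t,s)f(u(s))\,\rd s
\]
into the nonlocal condition yields $\big(1+\Phi_{A(u)}\big)u(0)+\Psi(u)f(u)=M$, so that a solution must satisfy $u=F(u)$, where
\[
F(u)(t):=U_{A(u)}(t,0)\,\Xi(u)+\int_0^t U_{A(u)}(t,s)f(u(s))\,\rd s\,,\quad \Xi(u):=\big(1+\Phi_{A(u)}\big)^{-1}\big(M-\Psi(u)f(u)\big)\,.
\]

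Next, I would establish the invertibility and Lipschitz dependence of $1+\Phi_{A(u)}$ on $E_\alpha$. By the argument leading to~\eqref{p2}, Proposition~\ref{P1} together with~\eqref{a2x} yields $\Phi_{A(0)}\in\mathcal{L}(E_0,E_1)\subset\mathcal{K}(E_0)$; an interpolation (or analogous direct) argument then gives $\Phi_{A(0)}\in\mathcal{K}(E_\alpha)$. Combining the Riesz--Schauder theorem with hypothesis~\eqref{o1} shows $1+\Phi_{A(0)}\in\mathcal{L}\mathrm{is}(E_\alpha)$. Openness of $\mathcal{L}\mathrm{is}(E_\alpha)$ in $\mathcal{L}(E_\alpha)$ combined with Theorem~\ref{T1} (applied in the scale $E_\alpha\hookrightarrow E_1$, and with the Lipschitz dependence $A(u)-A(\bar u)$ controlled via~\eqref{a5xx} exactly as in~\eqref{c6}--\eqref{c8}) then yields, for some $r\in(0,r_0)$, both $(1+\Phi_{A(u)})^{-1}\in\mathcal{L}(E_\alpha)$ for $u\in\bar{\mathbb{B}}_{C^\rho([0,T],E_\beta)}(0,r)$ and the Lipschitz bound
\[
\big\|(1+\Phi_{A(u)})^{-1}-(1+\Phi_{A(\bar u)})^{-1}\big\|_{\mathcal{L}(E_\alpha)}\le c(r)\,\|u-\bar u\|_{C^\rho([0,T],E_\beta)}\,.
\]
This is the analogue of Proposition~\ref{consequences} but crucially lives entirely inside the scale $(E_0,E_1)_\theta$ without any extrapolation.

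I would then adapt Lemmas~\ref{L14xx}, \ref{L14}, and \ref{psi} to the present simpler setting, producing the bounds
\[
\|U_{A(u)}(t,s)\|_{\mathcal{L}(E_\alpha,E_\xi)}\le c(r)(t-s)^{-(\xi-\alpha)_+},\qquad \Psi(u)\in\mathcal{L}\big(C_\nu((0,T],E_\gamma),E_\alpha\big)
\]
with the corresponding Lipschitz estimates in $u$ (in the $C([0,T],E_\beta)$-norm). The only new point to check is that $\Psi(u)f(u)\in E_\alpha$ when $\gamma<\alpha$: the relevant singular integral behaves like $\int_0^t(t-s)^{\gamma-\alpha-\ve}s^{-\mu(\ell+1)}\,\rd s$, which is convergent iff $\mu(\ell+1)+(\alpha-\gamma)<1$; this is precisely what condition~\eqref{A1Yf} on $\xi$ guarantees after choosing $\mu\in((\xi-\alpha)_+,(\ell+1)^{-1})$.

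Finally, I would run the contraction argument on
\[
X_L:=\big\{u\in C_\mu((0,T],E_\xi)\cap C^\rho([0,T],E_\beta)\;:\;\|u\|_{C_\mu((0,T],E_\xi)}+\|u\|_{C^\rho([0,T],E_\beta)}\le L\big\}
\]
with $\rho$ chosen as in~\eqref{n0}, showing $F(X_L)\subset X_L$ and contractivity of $F$ when $L$ and $\|M\|_{E_\alpha}$ are small. The estimates are literal transcriptions of \eqref{g1}--\eqref{i99} with $\Phi_{A(u)}^{-1}$ replaced by $(1+\Phi_{A(u)})^{-1}$ and with all quantities computed directly in the scale $E_\theta=(E_0,E_1)_\theta$, using $\Xi(u)\in E_\alpha$ (rather than in a higher regularity space). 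The regularity of the fixed point stated in the theorem follows as in the last paragraph of the proof of Theorem~\ref{MT1}, via~\cite[Proposition 2.1]{MW_PRSE}. I do not anticipate a serious obstacle: the main technical work of Theorem~\ref{MT1}—the analysis of $\Phi_{A(u)}^{-1}$ in extrapolated spaces $\mathcal{L}(E_{1+\alpha},E_\alpha)$—collapses here to the far easier statement that $(1+\Phi_{A(u)})^{-1}\in\mathcal{L}(E_\alpha)$, so the whole argument proceeds within one interpolation scale.
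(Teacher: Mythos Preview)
Your overall strategy---reformulate as a fixed point with $\Xi(u)=(1+\Phi_{A(u)})^{-1}(M-\Psi(u)f(u))$ and run a contraction---matches the paper exactly. But the execution invokes machinery whose hypotheses are \emph{not} granted by~\eqref{AAxx}. Specifically:
\begin{itemize}
\item Proposition~\ref{P1} and Theorem~\ref{T1} both require $\ww\in C^1([0,T])$, whereas~\eqref{a1x} only assumes $\ww\in C([0,T])$. Under~\eqref{a1x} you cannot conclude $\Phi_{A(0)}\in\mathcal{L}(E_0,E_1)$ via Proposition~\ref{P1}, nor obtain Lipschitz continuity of $A\mapsto\Phi_A$ in $\mathcal{L}(E_0,E_1)$ via Theorem~\ref{T1}.
\item Your appeal to ``\eqref{c6}--\eqref{c8}'' to control $\|A(u)-A(\bar u)\|_{C^\rho}$ uses the $C^{2-}$ regularity of $A$ in extrapolated spaces from~\eqref{a5}, but~\eqref{a5xx} only gives $A\in C^{1-}(\mathbb{B}_{E_\beta}(0,r_0),\mathcal{L}(E_1,E_0))$.
\item Your fixed-point space $X_L$ carries the $C^\rho$-metric, so the contraction requires the Hölder Lipschitz estimate~\eqref{E2} for $U_{A(u)}-U_{A(\bar u)}$; the proof of~\eqref{E2} (Lemma~\ref{L14}) uses the extrapolation structure (via~\eqref{h1},~\eqref{h2} in $E_{1+\theta_1}$), which is unavailable under~\eqref{a5xx}.
\end{itemize}

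The paper's remedy is precisely the content of Remark~\ref{R5}: since one only needs $\Phi_{A(u)}$ and its Lipschitz dependence in the \emph{interior} space $\mathcal{L}(E_\alpha)$ (not the endpoint $\mathcal{L}(E_0,E_1)$), one may bypass Theorem~\ref{T1} entirely and invoke \cite[II.Lemma~5.1.3, II.Lemma~5.1.4]{LQPP} directly. These yield $\Phi_{A(u)}\in\mathcal{L}(E_0,E_\alpha)\cap\mathcal{L}(E_\alpha,E_1)\subset\mathcal{K}(E_\alpha)$ and
\[
\|\Phi_{A(u)}-\Phi_{A(\bar u)}\|_{\mathcal{L}(E_\alpha)}\le c\,\|u-\bar u\|_{C([0,T],E_\beta)}
\]
using only $\ww\in C$ and $A\in C^{1-}$. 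To exploit this $C^0$-dependence, the paper runs the contraction on a space $Y_L$ where the Hölder bound is a \emph{constraint} (with fixed constant~$1$) but the \emph{metric} is only $\|\cdot\|_{C([0,T],E_\beta)}+\|\cdot\|_{C_\mu((0,T],E_\xi)}$. This decouples the self-mapping (which needs a $C^\rho$ bound for a single $u$, available from standard estimates) from the contraction (which only needs $C^0$ and $C_\mu$ bounds for differences), thereby avoiding the Hölder-in-time Lipschitz estimate~\eqref{E2} altogether.
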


\begin{proof}
The proof  relies on Banach's fixed point theorem.
To start, we  choose  $\mu$ and~$\rho$ such that
 \begin{equation*}
  (\xi-\alpha)_+<\mu<\frac{1-(\alpha-\gamma)_+}{\ell+1}\qquad\text{and}\qquad
 0<\rho<\min\{\alpha-\beta, 1-\mu(\ell+1)\}\,.
\end{equation*} 
For $L\in (0,r_0)$, we then set
\begin{align*}
Y_L := \left\{ u \in C([0,T],E_\beta) \cap C_\mu((0,T],E_\xi) \,:\, 
\text{%
\parbox[c]{0.425\linewidth}{%
$\|u(t)\|_{C([0,T],E_\beta)} + \|u\|_{C_\mu((0,T],E_\xi)} \le L$,\\[1mm]
$\|u(t)-u(s)\|_{E_\beta} \le (t-s)^\rho\,,\, 0 \le s \le t \le T$
}%
} \right\}\,.
\end{align*}
This closed subspace of $ C\big([0,T],E_\beta\big)\cap  C_\mu\big((0,T],E_\xi\big)$ is equipped with the distance function
$$
d_{Y_L}(u,\bar u):=\|u-\bar u\|_{C([0,T],E_\beta)} +\|u-\bar u\|_{C_\mu((0,T],E_\xi)} \,,\quad u, \bar u\in Y_L\,,
$$
and is thus a complete metric space.
Making $r_0>0$ smaller if necessary, we first note from \eqref{a5xx} that $A(u)\in C^{\rho}\big([0,T],\mathcal{H}(E_1,E_0)\big)$ for any $u\in  Y_L$, hence 
\begin{equation}\label{o3}
\Phi_{A(u)}\in \mathcal{L}(E_0,E_\alpha)\cap\mathcal{L}(E_\alpha,E_{1})\cap  \mathcal{K}(E_\alpha)
\end{equation}
 is well-defined due to~\eqref{a1x}  \cite[II.Lemma~5.1.3]{LQPP}. 
 Then \cite[II.Lemma~5.1.4]{LQPP} implies that we may choose $L\in (0,r_0)$ small enough such that
\begin{equation}\label{o4}
\| \Phi_{A(u)}-\Phi_{A(\bar u)}\|_{\mathcal{L}(E_\alpha)}\le c_0 \|u-\bar u\|_{C([0,T], E_\beta)}\,,\quad u, \bar u\in Y_L\,.
\end{equation}
Now, \eqref{o1} and \eqref{o3} ensure that $1+\Phi_{A(0)}\in \mathcal{L}is(E_\alpha)$. Since $\mathcal{L}is(E_\alpha)$ is open in~$\mathcal{L}(E_\alpha)$, it follows from~\eqref{o4} (making $L>0$ smaller, if necessary) that
$1+\Phi_{A(u)}\in \mathcal{L}is(E_\alpha)$ for $u\in Y_L$ and
\begin{equation}\label{o5}
\| (1+\Phi_{A(u)})^{-1}-(1+\Phi_{A(\bar u)})^{-1}\|_{\mathcal{L}(E_\alpha)}\le c_0 \|u-\bar u\|_{C([0,T], E_\beta)}\,,\quad u, \bar u\in Y_L\,.
\end{equation}
Analogously to Lemma~\ref{psi}, we infer for $\nu:=\mu(\ell+1)\in (0,1)$  that there is a constant~$c=c(L)>0$ such that
\begin{align}\label{o5a}
\|\Psi(u)\|_{\mathcal{L}(C_\nu((0,T],E_\gamma),E_{\alpha})}\le c
\end{align}
and
\begin{align}\label{o5aa}
\|\Psi(u)-\Psi(\bar u)\|_{\mathcal{L}(C_\nu((0,T],E_\gamma),E_{\alpha})}\le c\|u-\bar u\|_{C([0,T],E_\beta)}
\end{align}
Defining
$$
\Sigma(u):=(1+\Phi_{A(u)})^{-1}\big(M+\Psi(u) f(u))\,,\quad u\in Y_L\,,
$$
and recalling~\eqref{AAf} (see also~\eqref{n1}-\eqref{n2}), it thus follows from~\eqref{o5},~\eqref{o5a}, and~\eqref{o5aa} that $\Sigma$ satisfies
\begin{subequations}\label{SigmaA}
\begin{equation}
\|\Sigma(u)-\Sigma(\bar u)\|_{E_\alpha}\le c\big(\|M\|_{E_\alpha}+L^\ell+L^{\ell+1}\big) d_{Y_L}(u,\bar u)\,,\quad u,\bar u\in Y_L\,,
\end{equation}
and
\begin{equation}
\|\Sigma(u)\|_{E_\alpha}\le c\big(\|M\|_{E_\alpha}+L^{\ell+1}\big) \,,\quad u\in Y_L\,.
\end{equation}
\end{subequations}
It is now a standard argument to show that the mapping 
$$
u\mapsto U_{A(u)}(t,0)\Sigma(u)+\int_0^t U_{A(u)}(t,s)f(u(s))\,\rd s
$$ 
is a contraction on $Y_L$ provided that $\|M\|_{E_\alpha}$ and $L$ are sufficiently small. Hence, using the regularity result of \cite[II.Theorem~1.2.2]{LQPP} we conclude that~\eqref{EE2x} has a unique solution as stated in Theorem~\ref{MT3}.
\end{proof}

\subsection*{Well-Posedness for the Nonlocal Condition~\eqref{other2}}

Finally, we consider the problem
\begin{equation}\label{EE3}
u'=A(u)u+f(u)\,,\quad t\in(0,T]\,,\qquad u(0)-  \aw u(T)=M\,,
\end{equation}
with $\aw\in\R$. In this case, solutions obey the fixed point equation
\begin{equation}\label{EE3x}
u(t)=U_{A(u)}(t,0)\big(1- \aw U_{A(u)}(T,0)\big)^{-1}\big(M+N_u(T)f(u)\big)+N_u(t)f(u)\,,\quad t\in [0,T]\,,
\end{equation}
with 
$$
N_u(t)g:=\int_0^tU_{A(u)}(t,s)g(s)\,\rd s\,,\quad t\in [0,T]\,.
$$
Also this problem is much simpler than~\eqref{EE} and weaker assumptions are sufficient.
That is, in this case we assume~\eqref{AAxx}-\eqref{AAf} together with
\begin{equation}\label{o1y}
1\notin  \aw e^{T\sigma(A(0))}\,.
\end{equation}

Then, the well-posedness of \eqref{EE3} can also be guaranteed:

\begin{thm}\label{MT4}
Assume~\eqref{AAxx}-\eqref{AAf} and~\eqref{o1y}.
 Then, there exists $m_0>0$ such that~\eqref{EE3}
 has for each $M\in E_{\alpha}$ with $\|M\|_{E_{\alpha}}\le m_0$ a unique solution
$$
u\in C^{\min\{\alpha-\beta, 1-\mu(\ell+1)\}}\big([0,T],E_\beta\big)\cap C\big([0,T],E_\alpha\big)\cap C\big((0,T],E_1\big)\cap C^{1}\big((0,T],E_0\big)\,.
$$
such that  $\lim_{t\to 0^+}t^\mu\|u(t)\|_{E_\xi}=0$ for   $(\xi-\alpha)_+<\mu< (\ell+1)^{-1}$.
\end{thm}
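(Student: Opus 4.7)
The plan is to adapt the Banach fixed-point scheme of Theorem~\ref{MT3}, the only substantive change being that the operator to be inverted is $1-\aw U_{A(u)}(T,0)$ in place of $1+\Phi_{A(u)}$. I would fix $\mu$ and $\rho$ with $(\xi-\alpha)_+<\mu<(1-(\alpha-\gamma)_+)/(\ell+1)$ and $0<\rho<\min\{\alpha-\beta,\,1-\mu(\ell+1)\}$, and work on the same complete metric space $Y_L$ (with distance $d_{Y_L}$) as in the proof of Theorem~\ref{MT3}. For each $u\in Y_L$, assumption~\eqref{a5xx} yields $A(u)\in C^\rho([0,T],\mathcal{H}(E_1,E_0))$, so $U_{A(u)}(t,s)$ is defined, restricts to a bounded operator on $E_\alpha$, and depends Lipschitz-continuously on $u$ in $\mathcal{L}(E_\alpha)$ by~\cite[II.Lemma~5.1.4]{LQPP}.

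The key new step is the invertibility of $1-\aw U_{A(u)}(T,0)$ on $E_\alpha$ together with Lipschitz control of its inverse. Since $U_{A(u)}(T,0)\in\mathcal{L}(E_0,E_1)$ and $E_1\hookrightarrow E_0$ is compact, an interpolation argument shows that $U_{A(u)}(T,0)$ is compact on $E_\alpha$. For $u=0$, compactness of the analytic semigroup $e^{tA(0)}$ for $t>0$ yields the spectral mapping theorem $\sigma(U_{A(0)}(T,0))\setminus\{0\}=e^{T\sigma(A(0))}$, and the $E_\alpha$-spectrum of $A(0)$ coincides with its $E_0$-spectrum. Hence~\eqref{o1y} forces $1\notin\sigma(\aw U_{A(0)}(T,0))$ on $E_\alpha$, and Riesz--Schauder then gives $1-\aw U_{A(0)}(T,0)\in\mathcal{L}is(E_\alpha)$. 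Openness of the isomorphism class together with the Lipschitz estimate for $u\mapsto U_{A(u)}(T,0)$ (and shrinking $L$ if needed) propagates this to every $u,\bar u\in Y_L$:
$$
\|(1-\aw U_{A(u)}(T,0))^{-1}-(1-\aw U_{A(\bar u)}(T,0))^{-1}\|_{\mathcal{L}(E_\alpha)}\le c\,\|u-\bar u\|_{C([0,T],E_\beta)}\,.
$$

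With this in place the argument mirrors Theorem~\ref{MT3}. Setting $\nu:=\mu(\ell+1)\in(0,1)$, the argument behind~\eqref{o5a}--\eqref{o5aa} shows that $N_u(T)\in\mathcal{L}(C_\nu((0,T],E_\gamma),E_\alpha)$ uniformly in $u\in Y_L$ and is Lipschitz in~$u$ in that operator norm. Defining
$$
\Sigma(u):=\bigl(1-\aw U_{A(u)}(T,0)\bigr)^{-1}\bigl(M+N_u(T)f(u)\bigr)\,,\quad u\in Y_L\,,
$$
and combining the above with the semilinear bounds~\eqref{F2f}--\eqref{F2xf} (cf.~\eqref{n1}--\eqref{n2}) produces the verbatim analogue of~\eqref{SigmaA} for $\Sigma$. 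The mapping
$$
G(u)(t):=U_{A(u)}(t,0)\Sigma(u)+N_u(t)f(u)\,,\quad t\in[0,T]\,,
$$
is then, by the same estimates as in the proofs of Theorems~\ref{MT1} and~\ref{MT3}, a self-map and a strict contraction on $Y_L$ once $L$ and $m_0$ are sufficiently small. Banach's fixed-point theorem yields the unique solution, whose regularity follows from~\cite[II.Theorem~1.2.2]{LQPP} as before.

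The main obstacle is the spectral step: translating the generator-level hypothesis~\eqref{o1y} into invertibility of $1-\aw U_{A(0)}(T,0)$ on the interpolation space $E_\alpha$ (via the spectral mapping theorem for compact analytic semigroups) and then propagating this invertibility to every $u\in Y_L$ with quantitative Lipschitz control. Once this is in hand, everything else is a routine replay of Theorem~\ref{MT3}.
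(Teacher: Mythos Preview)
Your proposal is correct and follows essentially the same route as the paper: fix $\mu$ and $\rho$ exactly as you do, work on the same space $Y_L$, invert $1-\aw U_{A(u)}(T,0)$ on $E_\alpha$ by first doing it at $u=0$ via~\eqref{o1y} and then propagating by openness of $\mathcal{L}is(E_\alpha)$ and~\cite[II.Lemma~5.1.4]{LQPP}, define $\Sigma(u)$ and verify the analogue of~\eqref{SigmaA} using bounds on $N_u(t)$, and conclude by Banach's fixed point theorem plus~\cite[II.Theorem~1.2.2]{LQPP}. The only difference is cosmetic: you spell out the spectral-mapping argument behind the implication ``\eqref{o1y} $\Rightarrow$ $1-\aw e^{TA(0)}\in\mathcal{L}is(E_\alpha)$'' (compactness of $e^{TA(0)}$ on $E_\alpha$, $p$-independence of the spectrum), whereas the paper simply asserts~\eqref{t3}; and the paper records the $N_u(t)$-estimates in both $E_\alpha$ and $E_\xi$ explicitly, which you subsume under ``the same estimates as in the proofs of Theorems~\ref{MT1} and~\ref{MT3}''.
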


\begin{proof}
We can fix  
\begin{equation*}
  (\xi-\alpha)_+<\mu<\frac{1-(\alpha-\gamma)_+}{\ell+1}\qquad\text{and}\qquad
 0<\rho<\min\{\alpha-\beta, 1-\mu(\ell+1)\}\,.
\end{equation*} 
The proof now is the same as for Theorem~\ref{MT3} noticing that \eqref{o1y} implies that 
\begin{equation}\label{t3}
1- \aw U_{A(0)}(T,0)=1- \aw e^{TA(0)}\in\mathcal{L}is(E_\alpha)
\end{equation}
and hence $1- \aw U_{A(u)}(T,0)\in\mathcal{L}is(E_\alpha)$
for $u\in Y_L$ with $L>0$ small enough and that
$$
\Sigma(u):=\big(1- \aw U_{A(u)}(T,0)\big)^{-1}\big(M+N_u(T)f(u)\big)
$$
satisfies~\eqref{SigmaA} since, for $\nu:=\mu(\ell+1)\in (0,1)$, $\theta\in\{\alpha,\xi\}$, and $t>0$,
$$
\|N_u(t)g\|_{E_\theta}\le c(1+t^{1+\gamma_0-\theta-\nu})\|g\|_{C_\nu((0,t],E_\gamma)}\,,
$$
with $\gamma_0\in(0,\gamma)$ chosen such that  $\mu(\ell+1)<1+\gamma_0+\alpha$, and
$$
 \|(N_u(t)-N_{\bar u}(t))g\|_{E_\theta}\le ct^{1+\gamma-\theta-\nu}\|u-\bar u\|_{C([0,T],E_\beta)}\|g\|_{C_\nu((0,t],E_\gamma)}\,,
$$
where $c=c(L)>0$ is a suitable constant. This yields Theorem~\ref{MT4}.
\end{proof}

\bibliographystyle{siam}
\bibliography{LiteratureQuasiRecov}
\end{document}